\newcommand{\greek}[1]{{\selectlanguage{polutonikogreek}#1}}
\let\phi\varphi
\let\epsilon\varepsilon
\let\theta\vartheta
\let\mathbb\mathbbm
\newcommand{\N}{\mathbb{N}}
\newcommand{\R}{\mathbb{R}}
\newcommand{\C}{\mathbb{C}}
\newcommand{\nnz}{^\times}
\newcommand{\der}{\partial}
\newcommand{\evl}{\text{\scshape\texttt e}}
\newcommand{\cum}{{\textstyle \varint}}
\newcommand{\ocum}{{\setbox0=%
    \hbox{$\textstyle{\scriptstyle-}{\varint}$}%
    \textstyle{\vcenter{\hbox{$\scriptstyle-$}}\kern-.5\wd0}%
    \!\varint}}
\newcommand{\exppoly}{K[x,e^x]}
\newcommand{\galg}{\mathcal{F}}
\newcommand{\calg}{\mathcal{E}}
\newcommand{\bspc}{\mathcal{B}}
\newcommand{\cspc}{\mathcal{C}}
\newcommand{\fspc}{\mathcal{G}}
\newcommand{\const}{\mathcal{C}}
\newcommand{\init}{\mathcal{I}}
\newcommand{\diffop}{\calg[\der]}
\newcommand{\gdiffop}{\galg[\der]}
\newcommand{\intdiffop}[1][]{\galg_{#1}[\der,\cum]}
\newcommand{\gintop}{\galg[\cum]}
\newcommand{\inner}[2]{\langle #1 | #2 \rangle}
\newcommand{\bspclat}{K\Phi}
\newcommand{\allprob}{\calg[\der] \ltimes K\Phi}
\newcommand{\regprob}{\calg[\der]_{\Phi}}
\newcommand{\locprob}{K\calg[\der]_{\Phi}^\star}
\newcommand{\hypfun}[2][\Phi]{#2_{#1}}
\newcommand{\locfun}[2][\Phi]{#2_{#1}^\star}
\newcommand{\greenmon}{\mathfrak{G}}
\newcommand{\locgreen}[1][]{\greenmon^\star}
\newcommand{\loc}[2]{#1^{-1} \! #2}
\newcommand{\rloc}[2]{#1 #2^{-1}}
\newcommand{\Ker}[1]{\operatorname{Ker}(#1)}
\newcommand{\Img}[1]{\operatorname{Im}(#1)}
\newcommand{\orth}[1]{#1^\perp}
\newcommand{\dirs}{\dotplus}
\newcommand{\bvp}[2]{\boxed{\begin{array}{l}#1\\#2\end{array}}}
\newcommand{\fri}[1]{#1^\Diamond}
\newcommand{\shift}[1]{S_{#1}}
\newcommand{\fafac}[2]{#1^{\underline{#2}}}
\newcommand{\superfac}[1]{\operatorname{sf}(#1)}
\DeclareMathOperator{\lclm}{lclm}
\DeclareMathOperator{\ord}{ord}
\providecommand{\abs}[1]{\lvert#1\rvert}
\providecommand{\norm}[1]{\lVert#1\rVert}
\newcommand{\tma}{TH$\exists$OREM$\forall$}
\newcommand{\mpl}{\textsc{Maple}}
\newcommand{\mma}{\textsc{Mathematica}}
\begin{document}

\title{A Noncommutative Mikusi\'{n}ski Calculus\thanks{The first
    author acknowledges support through the EPSRC First Grant
    EP/I037474/1.}}  \author{Markus Rosenkranz \and Anja Korporal}

\institute{M. Rosenkranz \at
  School of Mathematics, Statistics and Actuarial Science,
  University of Kent, Canterbury CT2 7NF,\\
  Tel.: +44 1227 82-3650,
  Fax: +44 1227 82-7932,
  \email{M.Rosenkranz@kent.ac.uk}
  \and
  A. Korporal \at
  Research Institute for Symbolic Computation,
  Johannes Kepler University, 4040 Linz, Austria
}

\date{Submitted \today}

\maketitle

\begin{abstract}
  We set up a left ring of fractions over a certain ring of boundary
  problems for linear ordinary differential equations. The fraction
  ring acts naturally on a new module of generalized functions. The
  latter includes an isomorphic copy of the differential algebra
  underlying the given ring of boundary problems. Our methodology
  employs noncommutative localization in the theory of
  integro-differential algebras and operators. The resulting
  structure allows to build a symbolic calculus in the style of
  Heaviside and Mikusi{\'n}ski, but with the added benefit of
  incorporating boundary conditions where the traditional calculi
  allow only initial conditions.  \keywords{Linear boundary problems
    \and Differential algebra \and Mikusinski calculus \and
    Integro-differential operators \and Localization}
  \subclass{34B10 \and 13N10 \and 44A40}
\end{abstract}

\section{Introduction}
\label{sec:introduction}
% ======================

\paragraph{General Context.} Linear boundary problems are a crucial
concern of applied
mathematics~\cite{Duffy2001,Agarwal1986,AgarwalORegan2008,Stakgold1979}. One
might thus expect a rich \emph{algebraic theory} with \emph{symbolic
  algorithms}, so as to support the exact solution and manipulation of
a suitable class of boundary problems. Alas, this is not the case yet.

There may be two reasons for this. One is that the classical algebraic
theory of (nonlinear) differential equations, the \emph{differential
  algebra} built up by Ritt~\cite{Ritt1966} and
Kolchin~\cite{Kolchin1973}, does not lend itself easily to boundary
conditions: The elements of a differential ring or field are not
functions but abstract objects that cannot be evaluated at a boundary
point.

The other reason is of a pragmatic nature. If a differential equation
can be solved at all, one tends to delegate boundary conditions to
\emph{adhoc postprocessing} steps that would adapt the integration
constants/functions of the ``general solution''. While this may be
viable for linear ordinary differential equations (LODEs), the notion
of general solution is much less useful in the case of partial
differential equations (LPDEs): For example, there is not much point
in solving the $u_{xx} + u_{yy} = 0$ per se, but there are useful
representations of those solutions that satisfy Dirichlet boundary
conditions on the unit disc (Poisson kernel). So even though we will
be dealing only with LODEs in this paper, the larger context of LPDEs
should be kept in mind.

In the admittedly modest case of \emph{boundary problems for LODEs},
an algebraic theory was set up in~\cite{Rosenkranz2005}. The decisive
step was to expand the well-known structure of a differential ring,
complementing its derivation by a compatible Baxter operator (integral
operator). This solves at once two problems: It provides the algebraic
structure needed for expressing Green's operators (solution operators
for boundary problems), and it yields an evaluation (a multiplicative
functional). The notion of evaluation is the key for imposing boundary
conditions on the otherwise abstract objects of a differential ring;
one supplies as many evaluation as needed (in addition to the one
coming from the Baxter operator).

In~\cite{RosenkranzRegensburger2008}, the basic theory
of~\cite{Rosenkranz2005} was refined and generalized. Moreover, the
authors have introduced a multiplicative structure on boundary
problems that will also be crucial for our buildup of the Heaviside
calculus. The multiplication of boundary problems corresponds to the
composition of their Green's operators in reverse order; see
Equation~\eqref{eq:anti-isomorphism}.

The framework of boundary problems
of~\cite{Rosenkranz2005,RosenkranzRegensburger2008} was
\emph{implemented} several times: The first implementation was coded
in \mma/\tma\ as an external package for boundary problems with
constant coefficients. This version was superseded by a new
implementation as an internal \tma\ functor for generic
integro-differential algebras
in~\cite{RosenkranzRegensburgerTecBuchberger2009,Tec2011,%
  RosenkranzRegensburgerTecBuchberger2012}. Now in the third
generation, the most recent implementation is the Maple
package~\emph{IntDiffOp}~\cite{KorporalRegensburgerRosenkranz2010,%
  KorporalRegensburgerRosenkranz2011,KorporalRegensburgerRosenkranz2012},
which provides additional support for singular boundary problems (all
previous packages being restricted to regular boundary problems).

\paragraph{Heaviside Calculus.} Our main concern in this paper is to
build a bridge from the framework of symbolic boundary problems to the
classical Heaviside
calculus~\cite{Heaviside1893,Heaviside1894,Mikusinski1959}.  Since
O.~Heaviside's main idea, subsequently made rigorous by
J.~Mikusi\'{n}ski, was to treat the differential operator as a
\emph{symbolic multiplier}, it is perhaps natural to reflect on
possible connections to contemporary symbolic methods for differential
equations. But why to symbolic boundary problems?

The answer lies in Heaviside's so-called \emph{fundamental formula},
the algebraic analog of a well-known relation for the Laplace
transform. If~$s$ is the ``symbolic multiplier'' representing the
differential operator and~$f$ is a suitable function, then one has~$sf
= f' + f(0) \, \delta_0$. The point is that~$s$, which is supposed to
be invertible in this calculus, must somehow ``remember'' the
integration constant that is lost in differentiation. We will come
back to this point in more detail
(Section~\ref{sec:methorious-functions}). At this point it suffices to
say that an appropriate action of~$s$ on functions involves an
evaluation operator. It is used in the Heaviside-Mikusinski calculus
for incorporating the initial values associated to a differential
equation, thus yielding at once the solution of the whole initial
value problem. It is thus natural---staying in the frame of LODEs---to
ask if one can build up a more general calculus that would allow to
incorporate multiple boundary values, given by several evaluation
operators.

The basic ideas of Heaviside and Mikusi\'{n}ski have been vastly
extended, specifically by L. Berg~\cite{Berg1962} and
I. Dimovski~\cite{Dimovski1990}. The latter has also broached a
question closely related to the one raised above, namely the setup of
\emph{nonlocal convolutions}~\cite{Dimovski1994,Spiridonova2010} and
\emph{custom-tailored convolutions} for boundary
problems~\cite{Dimovski2012}. As far as we know, however, there are no
direct generalizations of the fundamental formula from one to more
evaluation operators acting on univariate functions.

\paragraph{Basic setup.} Our own approach is different in many
respects. It does not qualify as an \emph{operational calculus} in
Dimovski's understanding~\cite{Dimovski2012}, for whom its crucial
feature is that ``operators'' and ``operands'' are merged in a single
data structure, whereas algebraic analysis keeps the module of operand
functions separate from the ring of operators. In this sense, we
follow the line of algebraic analysis since we distinguish the ring of
methorious operators (Section~\ref{sec:localization}) from the module
of methorious functions
(Section~\ref{sec:methorious-functions}). Moreover, our approach is
genuinly algebraic while the hallmark of the Heaviside-Mikusi\'{n}ski
tradition is an ingenious mix of algebra and analysis. Nevertheless,
we believe that our setup is close in spirit to the original
Heaviside-Mikusi\'{n}ski setup, enjoying the following attractive
features:
\begin{itemize}
\item Its \emph{basic philosophy} follows closely in Heaviside's
  footsteps, making the differential operator invertible by
  ``remembering'' suitable boundary data.
\item It provides an algebraic structure that accommodates boundary
  problems for an \emph{arbitrary number of evaluation points} as well
  as \emph{nonlocal conditions}.
\item All boundary problems are covered \emph{uniformly}, so one need
  not set up custom-tailored multiplications (convolutions) for each
  type of boundary conditions one wants to consider.
\item It illuminates the passage from one to several evaluations
  algebraically: Localization takes place in a \emph{noncommutative
    ring}. In contrast, convolution algebras are commutative by
  definition~\cite{Dimovski1990}.
\item The construction is \emph{generic in the coefficient algebra}
  (it works for the class of umbral integro-differential algebras).
\end{itemize}
From the viewpoint of analysis, however, the class of umbral
coefficient algebras is rather limited compared to Mikusi{\'n}ski's
setup (continuous or even $L^2$ functions on the positive
half-axis). While this might be relaxed by suitable limit
considerations, this is not in the interest of the present paper,
where we want to focus on the algebraic aspects.

Our goal in this paper is to build a \emph{first bridge} between the
algebraic theory of boundary
problems~\cite{Rosenkranz2005,RosenkranzRegensburger2008} and
Heaviside's tactic of using ``symbolic fractions'' (localization in a
suitable convolution ring) for integrating differential equations with
initial/boundary data. In fact, it gives a new justification (see
after Proposition~\ref{prop:greensop-localization}) for a notational
device initiated in~\cite{RosenkranzRegensburger2008}: Since then we
have written~$\mathcal{B}^{-1}$ for the Green's operator associated to
a boundary problem~$\mathcal{B}$, originally chosen in view of the
anti-isomorphism~\eqref{eq:anti-isomorphism} mentioned above.

The work in this paper may also be seen as an answer to an (implicit)
question originating
from~\cite{RegensburgerRosenkranzMiddeke2009}. The setting there was
restricted to $K[x]$ coefficients and treated via Ore algebras. The
object of central interest was the \emph{integro-differential Weyl
  algebra}, which has shown to have two important quotient algebras:
The ring of localized differential operators~$K[\der, \der^{-1}][x]$
and the usual ring of integro-differential operators~$K[x][\der,
\cum]$. The former has $\der$ as a two-sided inverse, but no action
on~$K[x]$; the latter has the action, but~$\cum$ is only a right
inverse of~$\der$. Now the question arises: Can one build up a
structure, with an action on~$K[x]$ or other coefficient algebras,
such that the derivation has a two-sided inverse? We show in this
paper that the answer is affirmative, provided the derivation is
enhanced in a way similar to Heaviside's symbolic multiplier~$s$.

As always in algebra, localizing a ring sheds new light on its
structure, especially in the noncommutative setting (here the theory
of fraction rings is somewhat more delicate due to the Ore condition,
see Section~\ref{sec:localization}). We hope this will also be the
case for our present construction. But one must bear in mind that the
subject has only been touched and various issues remain in a
preliminary and unsatisfactory state (see the
Conclusion). Nevertheless, a wealth of new relations has been
uncovered, and we are confident that they will allow interesting
generalizations and refinements.

\paragraph{Structure of the paper.} We start out by summarizing the
basic theory of symbolic boundary problems, giving special emphasis to
their monoid structure (Section~\ref{sec:monoid-bp}). Our construction
is based on a certain kind of boundary conditions, which we have
called ``umbral'' because of certain relations to the umbral calculus
(Section~\ref{sec:umbral-bc}). After this preparation, we tackle the
task of localization for an integro-differential algebra with an
umbral character set (Section~\ref{sec:localization}). For the
resulting ring of fractions, we construct a module of functions on
which it acts naturally (Section~\ref{sec:methorious-functions}). We
conclude with some remarks about possible extensions and
generalizations (Section~\ref{sec:conclusion}).

\paragraph{Notation.} All rings are with unit but not necessarily
commutative. A domain is a (commutative or noncommutative) ring
without zero divisors. The zero vector space of any dimension will be
denoted by~$O$. We use the notation $\galg_1 \le \galg_2$ for
indicating that a vector space~$\galg_1$ is a subspace of a vector
space~$\galg_2$.

\section{The Monoid of Boundary Problems}
\label{sec:monoid-bp}
% ================================================

Our basic setting is that of~\cite{RosenkranzRegensburger2008a}. We
review the main results here for making our present treatment more
self-contained and for introducing various pieces of notation and
terminology in their proper places. We start from the notion of
\emph{integro-differential algebra}, a natural generalization of
differential algebras that permits the algebraic formulation of linear
boundary problems.

\begin{definition}
  \label{def:intdiffalg}
  We call $(\galg, \der, \cum)$ an \emph{integro-differential algebra}
  over $K$ if $\galg$ is a commutative $K$-algebra with $K$-linear
  operations $\der$ and $\cum$ such that the three axioms
  \begin{gather}
    \label{eq:section-axiom}
    (\cum f)' = f,\\
    \label{eq:leibniz-axiom}
    (fg)' = f'g+ fg',\\
    \label{eq:diff-baxter-axiom}
    (\cum f')(\cum g') + \cum (fg)' = (\cum f')g + f (\cum g')
  \end{gather}
  are satisfied, where $\dots'$ is the usual shorthand notation for $\der$.
\end{definition}

This definition differs from Definition~4
in~\cite{RosenkranzRegensburger2008a} as it
uses~\eqref{eq:diff-baxter-axiom} as the \emph{differential Baxter
  axiom} rather than the variant
\begin{equation}
  \label{eq:old-diff-baxter-axiom}
  \cum fg = f \cum g - \cum f' \cum g
\end{equation}
used earlier. In fact, one checks immediately that for
commutative~$\galg$ both axioms are equivalent (assuming the other
axioms). The advantage of~\eqref{eq:diff-baxter-axiom} is that it is
more symmetric and that it also sufficient for the noncommutative
case, where~\eqref{eq:old-diff-baxter-axiom} must be supplemented by
the corresponding dual axiom. Nevertheless, \emph{we will from now on
  tacitly assume that all integro-differential algebras are
  commutative, as in~\cite{RosenkranzRegensburger2008a}.}

Incidentally we note the following \emph{convention on precedence}:
The scope of an integral sign $\cum$ covers all factors to its right,
unless otherwise specified. Therefore the above term~$\cum f' \cum g$
is to be parsed as~$\cum (f' \cum g)$. This helps avoiding the
proliferation of parentheses in nested integrals.

The differential Baxter axiom is in general stronger than the
following \emph{pure Baxter axiom}
\begin{equation}
  \label{eq:pure-baxter-axiom}
  (\cum f)(\cum g) = \cum f \cum g + \cum g \cum f,
\end{equation}
which is the defining axiom for the so-called Rota-Baxter
algebras~\cite{Guo2002,Baxter1960,Rota1969}. (In fact, one may relax
the conditions on~$\galg$ to include noncommutative algebras over
arbitrary commutative rings with one. Moreover, one may add a
so-called weight term for incorporating the discrete setting
where~$\cum$ is, for example, the operator of partial summation. In
that case, one must of course also adapt~\eqref{eq:leibniz-axiom} to
account for the weight. Remarkably, the differential Baxter
axiom~\eqref{eq:diff-baxter-axiom} is the same with or without
weight.)

Every integro-differential algebra comes with a multiplicative
projector, namely the \emph{evaluation}~$\evl = 1 - \cum \der$. In
fact, the strong Baxter axiom is equivalent to the weak one combined
with the multiplicativity of~$\evl$. The presence of the
character~$\evl$ leads to the direct sum decomposition
\begin{equation}
  \label{eq:direct-sum}
  \galg = \const \dirs \init
\end{equation}
where~$\const = \Img{\evl} = \Ker{\der}$ is the usual \emph{ring of
  constants} while~$\init = \Ker{\evl} = \Img{\cum}$ is called the
\emph{ideal of initialized functions} (one checks immediately that the
multiplicativity of~$\evl$ is equivalent to~$\init$ being an
ideal). So~$\evl$ is the projector onto~$\const$
along~$\init$. Moreover, both~$\der$ and~$\cum$ are~$\const$-linear
rather than just~$K$-linear (for~$\der$ this is of course trivial, but
for~$\cum$ this is again equivalent to the multiplicativity
of~$\evl$).

In this paper, we want to restrict ourselves to boundary problems for
LODEs. For reflecting this property in the (integro) differential
structure, in~\cite{RosenkranzRegensburger2008a} we have called a
differential algebra~$(\galg, \der)$ ordinary if~$\Ker{\der} =
K$. \emph{We will henceforth assume that all integro-differential
  algebras are ordinary in this sense.}

Restricting ourselves to ordinary integro-differential algebras has a
number of pleasant implications. First of all, the evaluation is now a
multiplicative linear operator~$\evl\colon \galg \rightarrow \galg$,
meaning a \emph{character} (in the sense ``multiplicative linear
functional on an algebra''). Another consequencence, which will be of
some importance later, is that the polynomials behave as usual. For
any integro-differential algebra~$\galg$, we set~$x = \cum 1 \in
\galg$. Now the pure Baxer axiom~\eqref{eq:pure-baxter-axiom} ensures
that~$x^2 = 2 \cum \cum 1 \in \galg$, and so on. It turns out that the
elements of~$K[x]$ are ``really'' polynomials, and they satisfy the
usual differential equations (see before Eq.~(6) and also Eq.~(8)
in~\cite{RosenkranzRegensburger2008a} for more details).

\begin{proposition}
  \label{prop:pol-ring}
  Let~$\galg$ be an integro-differential algebra over~$K$, and
  let~$K[x]$ be the subalgebra generated by~$x = \cum 1$.  Then~$K[x]$
  is isomorphic to the univariate polynomial ring over~$K$. If~$\galg$
  is an ordinary integro-differential algebra, we have moreover
  \begin{equation}
    \label{eq:pol-fund-sys}
    \Ker{\der^n} = [1, x, \dots, x^{n-1}],
  \end{equation}
  and~$K[x] \le \galg$ is an integro-differential subalgebra.
\end{proposition}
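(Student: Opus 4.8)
The plan is to treat the three assertions in turn, reducing each to the basic identities $\der x = 1$ and $\der\cum = \id$ together with the structure of $\evl$. Throughout I assume, as is implicit in the surrounding discussion, that $K$ has characteristic zero (so $\mathbb{Q} \subseteq K$); this is precisely what legitimizes the division by integers below, and without it $x^p$ would collapse into $\const$ in characteristic $p$, breaking both the transcendence argument and the subalgebra claim. For the first assertion I would consider the $K$-algebra homomorphism $\psi\colon K[X] \to \galg$ determined by $X \mapsto x$; its image is by definition $K[x]$, so it remains only to show that $\psi$ is injective, i.e.\ that $x$ is transcendental over $K$.

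To see this, note first that $\der x = \der\cum 1 = 1$ by the section axiom, whence the Leibniz axiom gives $\der x^{n} = n\, x^{n-1}$ by an immediate induction. Suppose now there were a nonzero relation $\sum_{i=0}^{n} a_i x^i = 0$ with $a_n \neq 0$ of minimal degree $n$. A relation of degree $0$ is impossible because $K$ embeds into $\galg$ (its unit being nonzero); and if $n \ge 1$, then applying $\der$ produces the relation $\sum_{i=1}^{n} i\, a_i x^{i-1} = 0$ with nonzero leading coefficient $n a_n$ and strictly smaller degree, contradicting minimality. Hence no such relation exists and $\psi$ is an isomorphism.

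For the remaining assertions I would first record the companion formula $\cum x^i = \tfrac{1}{i+1}\, x^{i+1}$. The candidate $\tfrac{1}{i+1} x^{i+1}$ has derivative $x^i$, and since $x = \cum 1 \in \Img{\cum} = \init = \Ker{\evl}$ we have $\evl x = 0$, so by multiplicativity of $\evl$ also $\evl(x^{i+1}) = 0$; thus the candidate lies in $\init$. As $\cum x^i$ is the unique element of $\init = \Ker{\evl}$ with derivative $x^i$ (uniqueness coming from $\const \cap \init = O$ in the decomposition $\galg = \const \dirs \init$), the two coincide. Together with $\der x^i = i\, x^{i-1}$ this shows $K[x]$ is closed under both $\der$ and $\cum$; since the three axioms are universally quantified identities, they are inherited by any such closed subalgebra, which gives the third assertion.

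Finally, for the kernel formula I would argue by induction on $n$, using here the hypothesis that $\galg$ is ordinary, i.e.\ $\const = \Ker{\der} = K$. The inclusion $[1, x, \dots, x^{n-1}] \le \Ker{\der^n}$ is immediate from $\der x^i = i\, x^{i-1}$. For the reverse inclusion the base case $n=1$ is exactly $\Ker{\der} = K$. For the step, if $\der^{n+1} f = 0$ then $\der^n(\der f) = 0$, so by induction $\der f = \sum_{i=0}^{n-1} c_i x^i$; applying $\cum$ and using $f = \evl f + \cum\der f$ with $\evl f \in K$ then yields $f \in [1, x, \dots, x^n]$. The main obstacle I anticipate is not any single deduction but the bookkeeping that forces characteristic zero together with the correct identification of $\cum x^i$: the cleanest exposition states the characteristic hypothesis up front and then lets the direct-sum decomposition $\galg = \const \dirs \init$ and the multiplicativity of $\evl$ carry the weight of all three parts.
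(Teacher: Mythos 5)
Your proof is correct. Note first that the paper itself offers no proof of this proposition: it delegates it to~\cite{RosenkranzRegensburger2008a} (``see before Eq.~(6) and also Eq.~(8)''), and the only in-text hint is the remark that the pure Baxter axiom gives $x^2 = 2\cum\cum 1$ ``and so on'', i.e.\ the route there is to establish $x^n = n!\,\cum^n 1$ directly from~\eqref{eq:pure-baxter-axiom} and deduce the polynomial identities from that. You reach the same key formula $\cum x^i = \tfrac{1}{i+1}x^{i+1}$ by a different mechanism, namely uniqueness of antiderivatives in $\init$ via the decomposition $\galg = \const \dirs \init$ and multiplicativity of $\evl$; this is equally valid and arguably cleaner, since it isolates exactly which structural facts are used (it never invokes the Baxter axiom explicitly, only $\init = \Ker{\evl} = \Img{\cum}$ and $\const = \Ker{\der}$). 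Your transcendence argument by differentiating a minimal-degree relation, and the induction for $\Ker{\der^n}$ using $f = \evl f + \cum\der f$, are both standard and sound; you are also right to observe that only the kernel formula needs ordinariness, while transcendence and closure under $\der$, $\cum$ hold in general. Finally, you are correct to flag characteristic zero: the proposition as stated silently requires $\mathbb{Q} \subseteq K$ (the paper uses $x^k/k!$ freely elsewhere, and the cited reference makes this a standing assumption), and your proof makes explicit where it enters --- in the nonvanishing of $n a_n$ and in the formula for $\cum x^i$.
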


\begin{example}
  \label{ex:standard}
  The \emph{standard example} of an (ordinary) integro-differential
  algebra is~$\galg = C^\infty(\R)$ with~$\der u = u'$ in the usual
  sense and $\cum u = \cum_0^x u(\xi) \, d\xi$. This
  integro-differential algebra contains many important
  integro-differential subalgebras, for example the analytic
  functions~$C^\omega(\R)$, the exponential polynomials, and of course
  the polynomial ring~$\R[x]$. An important integro-differential
  subalgebra of~$C^\omega(\R)$ is formed by the holonomic power
  series~\cite{Chyzak1994,SalvyZimmerman1994}.
\end{example}

Given a differential algebra~$(\galg, \der)$, one may form the ring of
differential operators~$\gdiffop$. It is therefore natural to expect a
similar ring of \emph{integro-differential operators}~$\intdiffop$,
which contains differential operators (needed for specifying
differential equations) as well as integral operators (needed for
specifying Green's operators). The evaluation~$\evl$ can be used for
writing a condition like~$2u'(0) - 3 (0) = 0$ in the form~$(2 \evl
\der - 3 \evl) \, u = 0$. But for boundary problems one usually needs
more than one evaluation (see Example~\ref{ex:second-order} below).

For algebraizing general boundary conditions, we start from a
\emph{character set}~$\Phi$, meaning each~$\phi \in \Phi$ is a
multiplicative linear functional just as~$\evl$ is. In fact, we will
always assume~$\evl \in \Phi$. Based on the characters of~$\Phi$, we
can from arbitrary \emph{Stieltjes
  conditions}~\cite[Def.~14]{RosenkranzRegensburger2008a}; their
normal form is
\begin{equation}
  \label{eq:stieltjes-cond}
  \sum_{\phi \in \Phi} \left( \sum_{i \in \N}
    a_{\phi,i} \, \phi \der^i + \phi \cum f_{\phi} \right)
\end{equation}
with $a_{\phi,i} \in K$ and $f_{\phi} \in \galg$ almost all zero. The
summands with~$\cum$ make up the so-called global part, those without
the local part. The \emph{order} of a boundary condition is the
largest~$i$ such that~$a_i \ne 0$ in the normal
form~\eqref{eq:stieltjes-cond}. In the standard setting of
Example~\ref{ex:standard}, a typical Stieltjes condition like
\begin{equation*}
  u''(0) - 3 u(-1) + 7 u(1) + \int_0^1 \xi^2 u(\xi) \, d\xi -
  \int_{-1}^1 e^\xi u(\xi) \, d\xi = 0
\end{equation*}
is encoded as~$\beta(u) = 0$ with~$\beta = \evl_0 \der^2 - 3 \evl_{-1}
+ 7 \evl_1 + \evl_1 \cum x^2 - \evl_1 \cum e^x + \evl_{-1} \cum e^x$,
where we have written~$\evl_a$ for the standard character~$f \mapsto
f(a)$. Henceforth we shall always consider Example~\ref{ex:standard}
with the character set~$\Phi = \{ \evl_a \mid a \in \R \}$.

For each character set~$\Phi$ one may then build up the corresponding
\emph{operator ring}~$\intdiffop[\Phi]$. The details are given
in~\cite[\S3]{RosenkranzRegensburger2008a}; at this point it suffices
to note that the operator ring is given by the quotient of a free
algebra modulo a certain ideal of relations like the Leibniz rule, and
also~\eqref{eq:diff-baxter-axiom} as well
as~\eqref{eq:pure-baxter-axiom}. The resulting operator ring
decomposes nicely into a direct sum;
see~\cite[Prop.~17]{RosenkranzRegensburger2008a} for a proof.

\begin{proposition}
  \label{prop:intdiffop-direct-sum}
  For any integro-differential algebra~$(\galg, \der, \cum)$ and
  character set~$\Phi$, we have the decomposition~$\intdiffop[\Phi] =
  \gdiffop \dirs \gintop \dirs (\Phi)$.
\end{proposition}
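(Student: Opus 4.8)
The plan is to present $\intdiffop[\Phi]$ by a confluent and terminating reduction system on the free $K$-algebra generated by the multiplication operators $m_f$ $(f \in \galg)$, the derivation $\der$, the integral $\cum$, and the characters $\phi \in \Phi$, and then to read the decomposition off the resulting normal forms. First I would orient the defining relations into reduction rules: the Leibniz rule as $\der m_f \to m_f \der + m_{f'}$, the section axiom as $\der \cum \to \id$, the collapse of multiplications as $m_f m_g \to m_{fg}$, the character rule $\phi m_f \to \phi(f)\,\phi$ (together with $\der \phi \to 0$), an integration-by-parts rule $\cum m_g \der \to m_g - \evl m_g - \cum m_{g'}$, and---carrying the essential content of the Baxter axioms~\eqref{eq:diff-baxter-axiom} and~\eqref{eq:pure-baxter-axiom}---the nested-integral rule $\cum m_g \cum \to m_{\cum g}\cum - \cum m_{\cum g}$, whose verification uses the commutativity of $\galg$. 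The characters are thereby left-absorbing, so that every monomial containing a $\phi$ reduces to one carrying a single leading character followed by a Stieltjes tail.

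Next I would establish termination by a well-founded order on monomials, for instance the lexicographic combination of the number of integral signs, a weight measuring how far each $\der$ stands from the right end of the word, and finally the word length; one checks that each of the rules above strictly decreases this triple. With termination secured, Newman's lemma reduces confluence to local confluence, which I would verify by a finite critical-pair analysis in the style of Bergman's Diamond Lemma: every overlap between two left-hand sides---for example $\der\cum$ against $\cum m_g \cum$, $\der m_f$ against $m_f m_g$, $\cum m_g \cum$ against $\cum m_h \cum$, or $\phi m_f$ against $m_f m_g$---must be shown to resolve to a common normal form, the required identities being consequences of the integro-differential axioms of Definition~\ref{def:intdiffalg}.

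Once the system is confluent and terminating, the irreducible monomials fall into exactly three syntactically disjoint families: those containing neither an integral nor a character, namely $m_f \der^i$, which span $\gdiffop$; those containing an integral but no character, namely $m_f \cum m_g$, which span $\gintop$; and those containing a character, namely $m_f \phi \der^i$ and $m_f \phi \cum m_g$, which span the ideal $(\Phi)$ and reproduce precisely the Stieltjes normal form~\eqref{eq:stieltjes-cond}. By the Diamond Lemma these irreducible monomials form a $K$-basis of $\intdiffop[\Phi]$; since the three families are pairwise disjoint, their spans meet only in $O$, so the sum $\gdiffop + \gintop + (\Phi)$ is exhaustive and direct, which is the claim.

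The main obstacle will be the critical-pair analysis, and within it the overlaps that feed the three-term Baxter axiom~\eqref{eq:diff-baxter-axiom}. Because this relation is not a plain rewrite, its interaction with the section rule $\der\cum \to \id$ and with integration by parts produces the most delicate ambiguities, and resolving them leans repeatedly on the axioms of Definition~\ref{def:intdiffalg} together with the commutativity of $\galg$. A secondary point, essential for the directness of the third summand against the first two, is to confirm that the character and Baxter reductions never collapse a global part $\phi \cum m_f$ into the differential or integral family; this is ensured by the fact that any rule applied to a monomial carrying a $\phi$ again yields monomials carrying a $\phi$.
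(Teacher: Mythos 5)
Your proposal follows essentially the same route as the paper, which does not prove Proposition~\ref{prop:intdiffop-direct-sum} directly but quotes it from \cite[Prop.~17]{RosenkranzRegensburger2008a}, where the argument is exactly this one: present $\intdiffop[\Phi]$ by a noetherian and confluent rewrite system (a noncommutative Gr\"obner basis with infinitely many generators) and read the direct sum off the three disjoint families of normal forms $f\der^i$, $f\cum g$ and $f\phi\der^i$, $f\phi\cum g$. The only caveat is that your rule list as written is not yet complete enough to reach those normal forms (e.g.\ you also need $\cum\phi \to m_x\,\phi$ and $\phi\chi \to \chi$ for $\phi,\chi\in\Phi$, lest monomials like $\cum\phi$ remain irreducible outside all three families), but supplying these is a completion within your scheme rather than a different idea.
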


Here~$(\Phi)$ is the two-sided ideal generated by~$\Phi$ in the
operator ring~$R = \intdiffop[\Phi]$. We refer to its elements as
\emph{boundary operators} since they may also be described in terms of
Stieltjes conditions: One checks first that the collection of
Stieltjes conditions can be characterized as the right
ideal~$\mathopen|\Phi) \equiv \Phi R$. It turns out that~$(\Phi)$ is
then the left~$\galg$-module generated by~$\mathopen|\Phi)$, so every
boundary operator can be written as~$f_1 \beta_1 + \cdots + f_n
\beta_n$ for some functions~$f_1, \dots, f_n \in \galg$ and Stieljes
conditions~$\beta_1, \dots, \beta_n \in (\Phi)$. In particular, every
Stieltjes condition is also a boundary operator.

The decomposition~$\intdiffop[\Phi] = \gdiffop \dirs \gintop \dirs
(\Phi)$ reflects the basic needs for an \emph{algebraic formulation of
  boundary problems}: We need~$\gdiffop$ for the differential
equation, $(\Phi)$ for the boundary conditions, and~$\gintop$ for the
solution (Green's operators). Let us see how these ingredients look
like for the simplest possible boundary problem in the standard
setting of Example~\ref{ex:standard}.

\begin{example}
  \label{ex:second-order}
  The boundary problem
  \begin{equation*}
    \bvp{u'' = f}{u(0) = u(1) = 0}
  \end{equation*}
  can be encoded in the standard setting~$\galg = C^\infty(\R)$ by the
  differential operator~$T = \der^2 \in \gdiffop$ and the boundary
  conditions~$\evl_0 = \evl, \evl_1 \in (\Phi)$. Here we can
  choose~$\Phi = \{ \evl_0, \evl_1 \}$ or any character set containing
  that. The Green's operator can be written as~$G = Ax + xB - xAx -
  xBx \in \gintop$, where we use the standard abbreviations~$A = \cum$
  and~$-B = (1-\evl_1) \cum$. Note that~$A$ is the integral from~$0$
  to~$x$, and~$B$ the integral from~$x$ to~$1$. In the $L^2$ setting,
  $A$ and~$B$ are adjoint operators.
\end{example}

The algebraic description in terms of the two functionals~$\evl_0$
and~$\evl_1$ contains some arbitrariness since we can clearly form any
linear combination of the two functionals without changing the
solution operator~$G$. In general, a
\emph{boundary space} is a finite-dimensional subspace of~$\galg^*$
generated by Stieltjes conditions. So the boundary space of
Example~\ref{ex:second-order} is~$[\evl_0, \evl_1]$. The lattice of
all boundary spaces will be denoted by~$\bspclat$.

We will usually restrict the coefficient functions of the differential
operators to a differential subalgebra~$\calg \le \galg$ so as to
ensure solutions in the ambient algebra~$\galg$. This means we require
that~$\calg$ be \emph{saturated} for~$\galg$ in the sense
of~\cite[Def.~18]{RosenkranzRegensburger2008a}. \emph{Whenever an
  integro-differential algebra~$(\galg, \der, \cum)$ is specified, it
  is understood that a saturated coefficient algebra~$\calg$ is set
  aside.}

\begin{definition}
  \label{def:boundary-problem}
  A \emph{boundary problem} is a pair~$(T, \bspc)$ consisting of a
  monic differential operator~$T \in \calg[\der]$ and a boundary
  space~$\bspc \in K\Phi$.
\end{definition}

So the boundary problem in Example~\ref{ex:second-order} is given
by~$(\der^2, [\evl_0, \evl_1])$. Conversely, we can think of a
boundary problem~$(T, \bspc)$ as finding a solution~$u \in \galg$ such
that
\begin{equation*}
  \bvp{Tu=f}{\beta(u) = 0 \quad (\beta \in \bspc)}
\end{equation*}
for an arbitrary forcing function~$f \in \galg$. Of course it suffices
that~$\beta$ ranges over any $K$-basis of~$\bspc$; this is how the
boundary conditions are normally given in the first place. The
\emph{traditional formulation} in terms of~$u$ and~$f$ is more
intuitive, but it conceals the fact that we are really working in the
operator ring~$\intdiffop[\Phi]$.

Clearly every differential operator~$T$ has a certain order written
as~$\ord{T}$, which as in Example~\ref{ex:second-order} usually
coincides with the number of given boundary conditions (the dimension
of~$\bspc$). In fact, this a necessary---but in general not
sufficient---condition for the boundary problem to be \emph{regular}
in the sense that there is a unique solution~$u \in \galg$ for every
given forcing function~$f \in \galg$. In turns out that this is
equivalent to the following formulation in terms of the relevant
spaces.

\begin{definition}
  \label{def:regular-bp}
  A boundary problem~$(T, \bspc)$ is called \emph{regular} if~$\Ker{T}
  \dirs \orth{\bspc} = \galg$, and \emph{singular} otherwise.
\end{definition}

Here we have written~$\orth{\bspc}$ for the \emph{orthogonal}, meaning
the space of all~$f \in \galg$ such that~$\beta(f) = 0$ for all~$\beta
\in \bspc$. In other words, $\orth{\bspc}$ is the space of admissible
functions: those that satisfy the given boundary conditions. The
orthogonal, together with the analogous notion~$\orth{\fspc}$ for
subspaces~$\fspc \le \galg$, establishes a Galois connection
between~$\galg$ and its dual~$\galg^*$. This is similar to the
situation in algebraic geometry with its Galois connection between
affine varieties and radical
ideals. See~\cite[\S5]{RosenkranzRegensburger2008a} for more details.

The above definition of regularity is not suitable for algorithmic
purposes. However, it turns out to be equivalent to the following
explicit \emph{regularity
  test}~\cite[Prop.~6.1]{RegensburgerRosenkranz2009}, which is
well-known in the special case of local boundary
conditions~\cite[p.~184]{Kamke1967}.

\begin{lemma}
  \label{lem:regularity-test}
  Let~$(T, \bspc)$ be a boundary problem over~$\galg$ and choose
  bases~$\beta_1, \dots, \beta_n$ for~$\bspc$ and~$u_1, \dots, u_n$
  for~$\Ker{T}$. Then~$(T, \bspc)$ is regular iff~$\ord{T} =
  \dim{\bspc}$ and the \emph{evaluation matrix} $\beta(u) =
  [\beta_i(u_j)] \in K^{n \times n}$ is regular.
\end{lemma}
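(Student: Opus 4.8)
The plan is to prove both directions of the equivalence, exploiting the direct‑sum characterization of regularity from Definition~\ref{def:regular-bp}, namely $\Ker{T} \dirs \orth{\bspc} = \galg$. The key structural fact I would use is that the evaluation matrix $\beta(u) = [\beta_i(u_j)]$ encodes precisely how the boundary functionals act on a basis of the solution space $\Ker{T}$, so its regularity should control the interaction between $\Ker{T}$ and $\orth{\bspc}$.

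First I would establish the necessity of the dimension condition $\ord{T} = \dim{\bspc}$. Since $T \in \calg[\der]$ is monic of order $n = \ord{T}$, Proposition~\ref{prop:pol-ring} (or the standard theory of LODEs in an ordinary integro‑differential algebra) gives $\dim \Ker{T} = n$. For the direct sum $\Ker{T} \dirs \orth{\bspc} = \galg$ to hold, a dimension count forces $\codim \orth{\bspc} = n$; but $\codim \orth{\bspc} = \dim{\bspc}$ via the Galois connection between $\galg$ and $\galg^*$ (the functionals in $\bspc$ are linearly independent iff their common kernel has the corresponding codimension). Hence $\ord{T} = \dim{\bspc}$ is necessary, and I may assume it from now on for the main equivalence.

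The heart of the argument is to show, under $\dim{\bspc} = \dim \Ker{T} = n$, that $\Ker{T} \cap \orth{\bspc} = O$ if and only if the matrix $[\beta_i(u_j)]$ is invertible. The natural approach is to consider the linear map $\Phi\colon \Ker{T} \to K^n$ given by $u \mapsto (\beta_1(u), \dots, \beta_n(u))$, whose matrix in the chosen bases is exactly the evaluation matrix $\beta(u)$. An element $u = \sum_j c_j u_j \in \Ker{T}$ lies in $\orth{\bspc}$ precisely when $\beta_i(u) = 0$ for all $i$, i.e.\ when $\Phi(u) = 0$; so $\Ker{\Phi} = \Ker{T} \cap \orth{\bspc}$. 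Since both source and target are $n$‑dimensional, $\Phi$ is injective (equivalently $\Ker{T}\cap\orth{\bspc}=O$) iff its matrix is regular. Finally, for the passage from ``$\Ker{T}\cap\orth{\bspc}=O$ with matching dimensions'' to the full direct‑sum decomposition $\Ker{T} \dirs \orth{\bspc} = \galg$, I would use that $\dim\Ker{T} + \codim\orth{\bspc} = n + n$ need not immediately give $\dim\galg$ (which may be infinite); instead the correct statement is that trivial intersection together with $\codim \orth{\bspc} = \dim \Ker{T} = n$ yields the direct sum, since any $f \in \galg$ can be corrected by the unique element of $\Ker{T}$ matching its boundary data.

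The main obstacle I anticipate is precisely this last point: $\galg$ is typically infinite‑dimensional, so one cannot argue by naive dimension counting to get $\Ker{T} \oplus \orth{\bspc} = \galg$ from a trivial intersection alone. The clean way around this is to invoke the existence of the Green's operator in the regular case (the solvability of $Tu = f$ under the homogeneous boundary conditions), which is exactly what the decomposition in Proposition~\ref{prop:intdiffop-direct-sum} supports: $\gintop$ supplies a candidate particular solution, and adjusting it by a homogeneous solution to meet the boundary data is possible and unique exactly when the evaluation matrix is invertible. I would therefore phrase the ``$\Leftarrow$'' direction constructively, building the projector onto $\orth{\bspc}$ along $\Ker{T}$ from the inverse of $\beta(u)$, and the ``$\Rightarrow$'' direction by extracting the injectivity of $\Phi$ from the assumed direct sum.
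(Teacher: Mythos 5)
The paper itself contains no proof of this lemma: it is imported from \cite[Prop.~6.1]{RegensburgerRosenkranz2009}, so there is nothing internal to compare against. Judged on its own, your argument is sound and is essentially the standard linear-algebra proof. The three pillars are all correct: (i) $\codim{\orth{\bspc}} = \dim \bspc$ because linearly independent functionals $\beta_1, \dots, \beta_n$ induce a surjection $\galg \to K^n$; (ii) the map $u \mapsto (\beta_1(u), \dots, \beta_n(u))$ on $\Ker{T}$ has kernel $\Ker{T} \cap \orth{\bspc}$ and matrix $\beta(u)$, so trivial intersection is equivalent to regularity of the square matrix once dimensions match; (iii) trivial intersection plus $\dim \Ker{T} = \codim{\orth{\bspc}} = n$ does yield the full decomposition $\Ker{T} \dirs \orth{\bspc} = \galg$, since the quotient map $\galg \to \galg/\orth{\bspc}$ restricts to an isomorphism on $\Ker{T}$ --- concretely, your ``correction'' step: solve $\beta(u) \, c = (\beta_i(f))_i$ and subtract $\sum_j c_j u_j$ from $f$. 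This works verbatim in infinite dimensions, so the obstacle you flag is already disposed of by your own argument.

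Two points should be repaired. First, $\dim \Ker{T} = \ord{T}$ for a general monic $T \in \calg[\der]$ does \emph{not} follow from Proposition~\ref{prop:pol-ring}, which only covers $T = \der^n$; it is exactly what the standing saturation hypothesis on~$\calg$ provides (unique solvability of initial value problems, hence existence of a full fundamental system and of~$\fri{T}$, cf.\ \cite[Prop.~22]{RosenkranzRegensburger2008a}), and that is what you should cite. Second, the closing appeal to ``the existence of the Green's operator in the regular case'' should be dropped: Green's operators are only guaranteed for problems already known to be regular, which is precisely the property being characterized, so that route is circular in flavor and in any case superfluous --- building the projector onto $\Ker{T}$ along $\orth{\bspc}$ directly from $\beta(u)^{-1}$, as you propose in the same sentence, completes the proof on its own.
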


Regular boundary problems are exactly those that have a \emph{Green's
  operator} in the classical sense
(see~\cite{KorporalRegensburgerRosenkranz2011} on generalized Green's
operators for singular boundary problems). In our algebraic setting,
Green's operators can be defined within the operator
ring~$\intdiffop[\Phi]$.

\begin{definition}
  \label{def:greens-operator}
  Let~$(T, \bspc)$ be a regular boundary problem over an
  integro-differential algebra~$\galg$. Then~$G \in \intdiffop[\Phi]$
  is called its \emph{Green's operator} if~$TG = 1$ and~$\Img{G} =
  \orth{\bspc}$.
\end{definition}

The two conditions for Green's operators are just that $u = Gf$
satisfies the differential equation~$Tu = f$ as well as all boundary
conditions~$\beta(u) = 0$ for~$\beta \in \bspc$. Since we have assumed
a coefficient algebra~$\calg$ saturated for~$\galg$, one can
prove~\cite[Thm.~26]{RosenkranzRegensburger2008a} that \emph{every
  regular boundary problem} has a Green's operator
in~$\intdiffop[\Phi]$. In a leap of faith, we write~$G = (T,
\bspc)^{-1}$ just as
in~\cite[p.~533]{RosenkranzRegensburger2008a}. The relation to actual
inverses will become clear in
Proposition~\ref{prop:greensop-localization}.

Apart from the localization, there is another good reason for the
notation~$G = (T, \bspc)^{-1}$. It turns out that boundary problems
can be multiplied in such a way that
\begin{equation}
  \label{eq:anti-isomorphism}
  \Big( (T_1, \bspc_1) (T_2, \bspc_2) \Big)^{-1} = 
  (T_2, \bspc_2)^{-1} \, (T_1, \bspc_1)^{-1}
\end{equation}
is satisfied. For that purpose, we have defined the multiplication of
boundary problems in the fashion of a semi-direct product by
\begin{equation}
  \label{eq:bp-product}
  (T_1, \bspc_1) (T_2, \bspc_2) = (T_1 T_2, \bspc_1 T_2 +
  \bspc_2).
\end{equation}
It is easy to check that one obtains a \emph{monoid}~$\allprob$ in
this way, with~$(1, O)$ as the neutral element. Furthermore, it
turns out that the regular boundary problems form a
submonoid~$\regprob \subset \allprob$, and~\eqref{eq:anti-isomorphism}
means that~$(T, \bspc) \mapsto (T, \bspc)^{-1}$ is an anti-isomorphism
from~$\regprob$ to the multiplicative monoid of Green's operators. For
a proof of these facts, we refer
to~\cite[Prop.~27]{RosenkranzRegensburger2008a}.

The monoid algebra~$K \regprob$ will be called the \emph{ring of
  boundary problems} (we suppress the qualification ``regular'' since
we will not investigate the singular case in the frame of this
paper). Its elements will supply the numerators in the localization to
be constructed below (Section~\ref{sec:localization}).

At this point we should also note that a regular boundary problem~$(T,
\bspc)$ need not be \emph{well-posed} in the sense that its Green's
operator~$G$ is continuous in the standard setting of
Example~\ref{ex:standard}. For example, consider the regular boundary
problem~$(\der - 1, \evl_0 \der^2)$ or
\begin{equation*}
  \bvp{u'-u=f}{u''(0)=0}
\end{equation*}
in traditional notation. One checks immediately that~$G = e^x \cum
e^{-x} - e^x \evl_0 - e^x \evl_0 \der \in \intdiffop[\Phi]$ is its
Green's operator, meaning the general solution is given by
\begin{equation*}
  u(x) = \cum_0^x e^{x-\xi} f(\xi) \, d\xi - (f(0) + f'(0)) \, e^x.
\end{equation*}
Clearly, this Green's operator~$G$ is not continuous, at least not if
one endows~$C^\infty(\R)$ with the usual topology induced by~$C(\R)$.
Following Hadamard, a well-posed boundary
problem~\cite[p.~86]{Engl1997} must be regular (meaning the solution
$u$ exists and is unique for each given~$f$) as well as stable
(meaning the solution $u$ depends continuously on $f$). In the example
above, the source of the instability is clear---we have imposed a
second-order condition for a first-order differential equation. Going
back to the algebraic setting then, we call a boundary problem~$(T,
\bspc)$ well-posed if it is regular and~$\bspc$ can be generated by
Stieltjes conditions whose order is smaller than that of~$T$;
otherwise~$(T, \bspc)$ is called \emph{ill-posed}. As one easily
checks, the well-posed boundary problems form a submonoid
of~$\regprob$.

The Green's operator~$G$ of a regular boundary problem~$(T, \bspc)$ of
order~$n$ factors naturally as
\begin{equation}
  \label{eq:proj-times-fri}
  G = (1-P) \, \fri{T},
\end{equation}
where $\fri{T}$ is the so-called \emph{fundamental right inverse}
of~$T$, defined as the Green's operator of the initial value
problem~$(T, [\evl, \evl \der, \dots, \evl \der^{n-1}]$, and~$P$ is
the projector onto~$\Ker{T}$ along~$\orth{\bspc}$. The fundamental
right inverse~$\fri{T}$ exists since initial value problems are always
solvable~\cite[Prop.~22]{RosenkranzRegensburger2008a}. We may thus
think of~\eqref{eq:proj-times-fri} as a two step process:
Using~$\fri{T}$ one solves first the initial value problem (which is
usually much easier), then one incorporates the boundary conditions
via the projector~$P$.

The submonoid relation~$\regprob \subset \allprob$ means that
multiplying regular problems leads to regular problems again. It is
interesting to note that the converse is also true, provided the order
condition of Lemma~\ref{lem:regularity-test} holds.

\begin{lemma}
  \label{lem:reg-factors}
  Let $(T_1, \bspc_1), (T_2, \bspc_2)$ be boundary problems over an
  integro-differential algebra~$\galg$ with $\ord T_1 = \dim \bspc_1$
  and $\ord T_2 = \dim \bspc_2$. Then~$(T_1, \bspc_1)$ and~$(T_2,
  \bspc_2)$ are regular whenever $(T_1, \bspc_1) \cdot (T_2, \bspc_2)$
  is.
\end{lemma}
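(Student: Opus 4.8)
The plan is to prove the contrapositive at the level of the regularity criterion from Lemma~\ref{lem:regularity-test}, translating everything into a statement about evaluation matrices. Since we are already given the order conditions $\ord T_1 = \dim \bspc_1$ and $\ord T_2 = \dim \bspc_2$, and since the product is $(T_1 T_2, \bspc_1 T_2 + \bspc_2)$ with $\ord(T_1 T_2) = \ord T_1 + \ord T_2$ and (generically) $\dim(\bspc_1 T_2 + \bspc_2) = \dim \bspc_1 + \dim \bspc_2$, the order condition for the product holds automatically. Thus the only content is the determinant: I want to show that if the evaluation matrix of the product is regular (invertible over $K$), then both factor evaluation matrices are regular. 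My strategy is to exhibit the product's evaluation matrix in block-triangular form, so that its determinant factors as a product of the two factors' determinants.

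To set this up, first I would choose a fundamental system $u_1,\dots,u_{n_2}$ for $\Ker{T_2}$ and a fundamental system $v_1,\dots,v_{n_1}$ for $\Ker{T_1}$, and then assemble a basis of $\Ker{T_1 T_2}$. The natural choice is to take the $u_j$ together with right-inverse preimages of the $v_i$ under $T_2$: since $T_2 \fri{T_2} = 1$ by the existence of the fundamental right inverse, the functions $w_i := \fri{T_2} v_i$ satisfy $T_2 w_i = v_i$ and hence $T_1 T_2 w_i = 0$, so $\{u_1,\dots,u_{n_2}, w_1,\dots,w_{n_1}\}$ spans $\Ker{T_1 T_2}$ (a dimension count via $\ord(T_1 T_2) = n_1 + n_2$ confirms it is a basis). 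For the boundary side, I would use the basis of $\bspc_1 T_2 + \bspc_2$ given by $\{\beta_1 T_2,\dots,\beta_{n_1} T_2,\ \gamma_1,\dots,\gamma_{n_2}\}$, where the $\beta_i$ and $\gamma_j$ are bases of $\bspc_1$ and $\bspc_2$ respectively. The key computation is then to evaluate each boundary condition on each kernel element and observe the block structure.

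The crucial cancellations are these. Since $T_2 u_j = 0$, we have $(\beta_i T_2)(u_j) = \beta_i(T_2 u_j) = 0$, so the block pairing the $\bspc_1 T_2$ conditions against the $\Ker{T_2}$ elements vanishes. Meanwhile $(\beta_i T_2)(w_k) = \beta_i(v_k)$, which reproduces exactly the evaluation matrix of $(T_1, \bspc_1)$, and $\gamma_j(u_k)$ reproduces the evaluation matrix of $(T_2, \bspc_2)$. Arranging the rows and columns so that the $\bspc_1 T_2$/$\Ker{T_2}$ block sits in the zero corner, the full evaluation matrix becomes block lower-triangular, so its determinant equals $\det[\beta_i(v_k)] \cdot \det[\gamma_j(u_k)]$ up to sign. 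Consequently the product's evaluation matrix is regular iff both factor matrices are regular; in particular regularity of the product forces regularity of both factors, which together with the already-verified order conditions yields regularity of $(T_1,\bspc_1)$ and $(T_2,\bspc_2)$ by Lemma~\ref{lem:regularity-test}.

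The main obstacle I anticipate is not the triangular-block argument itself but justifying that the chosen families really are \emph{bases} of the correct dimension, independent of the choice of right inverse $\fri{T_2}$. Specifically, I must check that $\bspc_1 T_2 + \bspc_2$ is genuinely a \emph{direct} sum of dimension $n_1 + n_2$ (so that the listed boundary conditions are linearly independent), since if $\bspc_1 T_2$ and $\bspc_2$ overlapped, the order condition $\ord(T_1 T_2) = \dim(\bspc_1 T_2 + \bspc_2)$ could fail and the clean block picture would collapse. I expect this to follow from the fact that composition with $T_2$ on the right is injective on boundary spaces and that $\bspc_1 T_2$ raises the order of each condition by $\ord T_2$, keeping it disjoint from $\bspc_2$ whose orders are bounded by $\ord T_2$; making this precise, rather than the determinant factorization, is where the real care is needed.
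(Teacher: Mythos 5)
Your argument is essentially the paper's own proof: the same fundamental system $\fri{T_2}v_1,\dots,\fri{T_2}v_{n_1},u_1,\dots,u_{n_2}$ for $T_1T_2$, the same basis $\beta_iT_2,\gamma_j$ of the product's boundary space, and the same block-triangular evaluation matrix whose diagonal blocks are the evaluation matrices of the two factors. The directness of the sum $\bspc_1 T_2+\bspc_2$, which you rightly single out as the delicate point, is settled in the paper by citing an unconditional result (Prop.~3.2 of the factorization reference) rather than by your order-bounding heuristic, which would not cover ill-posed $\bspc_2$; alternatively, note that if the sum were not direct its dimension would drop below $\ord(T_1T_2)$ and the product would already be singular by the order test.
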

\begin{proof}
  Let us write~$m = \ord T_1 = \dim \bspc_1$ and $n = \ord T_2 = \dim
  \bspc_2$. Choose fundamental systems~$f_1, \dots, f_m \in \galg$
  for~$T_1$ and~$g_1, \dots, g_n \in \galg$
  for~$T_2$. Take~$K$-bases~$\beta_1, \dots, \beta_m$ of~$\bspc_1 \le
  \galg^*$ and~$\gamma_1, \dots, \gamma_n$ of~$\bspc_2 \le
  \galg^*$. Then~$\fri{T_2}\! f_1, \dots, \fri{T_2}\! f_m, g_1, \dots,
  g_n$ is a fundamental system for~$T_1T_2$, and a~$K$-basis
  of~$\bspc_1 T_2+\bspc_2$ is given by $\beta_1 T_2, \dots, \beta_m
  T_2$, $\gamma_1, \dots, \gamma_n$. The latter fact follows since the
  sum~$\bspc_1 T_2 + \bspc_2$ in~\eqref{eq:bp-product} is always
  direct~\cite[Prop.~3.2]{RegensburgerRosenkranz2009}. By
  Lemma~\ref{lem:regularity-test} the regularity of the boundary
  problem~$(T_1T_2, \bspc_1 T_2 + \bspc_2)$ means that its evaluation
  matrix
  \begin{equation*}
    \begin{pmatrix}
      (\beta T_2)(\fri{T_2}\! f) & (\beta T_2)(g)\\
      \gamma(\fri{T_2}\! f) & \gamma(g)
    \end{pmatrix}
    =
    \begin{pmatrix}
      \beta(f) & 0\\
      \gamma(\fri{T_2}\! f) & \gamma(g)
    \end{pmatrix}
  \end{equation*}
  is regular. But this is only possible if both diagonal
  blocks~$\beta(f)$ and~$\gamma(g)$ are regular, and these are just
  the evaluation matrices of~$(T_1, \bspc_1)$ and~$(T_2, \bspc_2)$.
\end{proof}

For practical applications, one is not so much interested in
multiplying boundary problems---thus increasing their order---as to
factor them into lower-order problems. Interestingly, factorization
will also be instrumental in the localization process
(Lemma~\ref{lem:regularization}). We repeat here as the main result
the so-called \emph{Factorization
  Theorem}~\cite[Thm.~32]{RosenkranzRegensburger2008a}.

\begin{theorem}
  \label{thm:fact-bp}
  Given a regular boundary problem~$(T, \bspc) \in \regprob$, every
  factorization~$T = T_1 T_2$ of the differential operator lifts to a
  factorization~$(T, \bspc) = (T_1, \bspc_1) \cdot (T_2, \bspc_2)$
  with~$(T_1, \bspc_1), (T_2, \bspc_2)$ regular and~$\bspc_2 \le
  \bspc$.
\end{theorem}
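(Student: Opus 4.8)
The plan is to lean on the converse statement just established in Lemma~\ref{lem:reg-factors}: instead of verifying regularity of the two factors directly, I would construct candidate boundary spaces $\bspc_1$ and $\bspc_2$ whose orders match their dimensions and whose product equals $(T,\bspc)$; since $(T,\bspc)$ is assumed regular, Lemma~\ref{lem:reg-factors} then hands me regularity of both factors for free. Writing $m = \ord T_1$ and $n = \ord T_2$, regularity of $(T,\bspc)$ supplies $\galg = \Ker{T} \dirs \orth{\bspc}$ together with $\dim\bspc = \ord T = m+n$, so the whole task reduces to splitting the $(m{+}n)$-dimensional space $\bspc$ in the right way.

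First I would isolate the part of $\bspc$ that genuinely belongs to $T_2$. Since $\Ker{T_2} \le \Ker{T}$, the decomposition $\galg = \Ker{T} \dirs \orth{\bspc}$ forces $\orth{\bspc} \cap \Ker{T_2} = O$. Hence the restriction map $\bspc \to (\Ker{T_2})^*,\ \beta \mapsto \beta|_{\Ker{T_2}}$, has kernel $\bspc_* := \bspc \cap \orth{(\Ker{T_2})}$ and is onto (its image can annihilate only $\orth{\bspc}\cap\Ker{T_2} = O$), whence $\dim\bspc_* = (m+n)-n = m$ by rank–nullity. For $\bspc_2$ I would then take any vector-space complement of $\bspc_*$ inside $\bspc$, so that $\bspc = \bspc_* \dirs \bspc_2$ with $\dim\bspc_2 = n$ and, automatically, $\bspc_2 \le \bspc$ (so $\bspc_2$ is again a boundary space).

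Next I would recover $\bspc_1$ from $\bspc_*$. Because every $\delta \in \bspc_*$ annihilates $\Ker{T_2}$, right-composition with the fundamental right inverse $\fri{T_2}$ undoes $T_2$ on such conditions: as $\fri{T_2} T_2$ is the projector onto $\Img{\fri{T_2}}$ along $\Ker{T_2}$, one gets $(\delta\,\fri{T_2})\,T_2 = \delta$. Setting $\bspc_1 := \bspc_* \fri{T_2}$ therefore yields $\bspc_1 T_2 = \bspc_*$ and $\dim\bspc_1 = m$; moreover $\bspc_1$ is an honest boundary space because $\fri{T_2} \in \intdiffop[\Phi]$ and the Stieltjes conditions are closed under right multiplication by operators (they form a right ideal in $\intdiffop[\Phi]$). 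Feeding this into the product formula~\eqref{eq:bp-product} gives $(T_1,\bspc_1)(T_2,\bspc_2) = (T_1 T_2,\ \bspc_1 T_2 + \bspc_2) = (T,\ \bspc_* \dirs \bspc_2) = (T,\bspc)$; since $\ord T_1 = m = \dim\bspc_1$ and $\ord T_2 = n = \dim\bspc_2$, Lemma~\ref{lem:reg-factors} renders both factors regular, with $\bspc_2 \le \bspc$ as demanded.

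The main obstacle is this middle step: guaranteeing that the purely numerical split $\bspc = \bspc_* \dirs \bspc_2$ really lifts to a factorization of boundary \emph{problems}. Two things must cooperate — that $\bspc_1 = \bspc_* \fri{T_2}$ is a boundary space in the strict sense (generated by Stieltjes conditions, not merely arbitrary functionals) and that composing back with $T_2$ returns $\bspc_*$ exactly, rather than only modulo the kernel $\Ker{T_2}$. Both rest on the interplay between the right-inverse identity $T_2 \fri{T_2} = \id$ and the annihilation property of $\bspc_*$; once that is pinned down, the dimension bookkeeping and the appeal to Lemma~\ref{lem:reg-factors} are routine.
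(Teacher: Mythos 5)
Your proof is correct. Note that the paper itself gives no proof of Theorem~\ref{thm:fact-bp}: it is quoted from \cite[Thm.~32]{RosenkranzRegensburger2008a}, where the argument runs by first exhibiting a subspace $\bspc_2 \le \bspc$ making $(T_2,\bspc_2)$ regular outright (via $\orth{\bspc}\cap\Ker{T_2}=O$ and a dimension count on $\orth{\bspc_2}\dirs\Ker{T_2}$) and then setting $\bspc_1 = \bspc\, G_2$ with $G_2$ the Green's operator of that right factor. Your route is genuinely different in two respects. First, you replace $G_2$ by the fundamental right inverse $\fri{T_2}$ applied only to $\bspc_* = \bspc\cap\orth{(\Ker{T_2})}$; the identity $\delta\,\fri{T_2}T_2=\delta$ for $\delta\in\bspc_*$ does hold exactly (since $1-\fri{T_2}T_2$ has image in $\Ker{T_2}$, which $\delta$ annihilates), it gives injectivity of $\delta\mapsto\delta\,\fri{T_2}$ on $\bspc_*$ and hence $\dim\bspc_1=m$, and $\bspc_1$ is a bona fide boundary space because Stieltjes conditions form a right ideal. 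Second, you outsource the regularity of both factors to Lemma~\ref{lem:reg-factors}, which is legitimate since you have arranged $\ord T_i=\dim\bspc_i$ and the product is the regular problem $(T,\bspc)$; this makes the argument self-contained within the material actually reproduced in this paper. A pleasant by-product of your construction is the observation that the admissible right factors $\bspc_2$ are exactly the complements of $\bspc_*$ in $\bspc$, which sharpens the remark following the theorem that $\bspc_2$ may be any subspace of $\bspc$ for which $(T_2,\bspc_2)$ is regular.
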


We note that the right factor may be chosen to be an arbitrary
subspace of~$\bspc$ as long as regularity holds for~$(T_2, \bspc_2)$;
in particular one can always choose initial conditions for~$\bspc_2$
to ensure regularity. In contrast, $\bspc_1$ is determined by the
choice of~$T_1$ and~$T_2$ alone. This is the content of the following
uniqueness result~\cite[Prop.~31]{RosenkranzRegensburger2008a}, called
the \emph{Division Lemma} for boundary problems (see below for the
notion of subproblem).

\begin{lemma}
  \label{lem:division}
  Given a regular boundary problem~$(T, \bspc) \in \regprob$ and any
  factorization $T = T_1 T_2$ of the differential operator, there is a
  unique boundary problem~$(T_1, \bspc_1)$ such that for any regular
  subproblem~$(T_2, \bspc_2) \le (T, \bspc)$ we have the lifted
  factorization~$(T_1, \bspc_1) \cdot (T_2, \bspc_2) = (T, \bspc)$.
\end{lemma}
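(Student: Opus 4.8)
The plan is to translate the entire statement into the language of Green's operators via the anti-isomorphism~\eqref{eq:anti-isomorphism}. Writing $G = (T,\bspc)^{-1}$ and $G_i = (T_i,\bspc_i)^{-1}$, the desired lifted factorization $(T_1,\bspc_1)\cdot(T_2,\bspc_2) = (T,\bspc)$ is equivalent—since $(T,\bspc)\mapsto(T,\bspc)^{-1}$ is a bijective anti-homomorphism onto the monoid of Green's operators—to the single operator identity $G_2 G_1 = G$. Thus the task becomes: produce a unique regular boundary problem $(T_1,\bspc_1)$ whose Green's operator $G_1$ satisfies $G_2 G_1 = G$ for every regular subproblem $(T_2,\bspc_2)\le(T,\bspc)$.

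First I would pin down what $G_1$ must be. By the Factorization Theorem~\ref{thm:fact-bp} the factorization $T = T_1 T_2$ lifts to at least one product $(T_1,\bspc_1)\cdot(T_2,\bspc_2) = (T,\bspc)$ with both factors regular and $\bspc_2 \le \bspc$, so a regular left factor certainly exists. For any such lifting, applying the defining relation $T_2 G_2 = 1$ to $G_2 G_1 = G$ on the left gives $G_1 = T_2 G_2 G_1 = T_2 G$. Hence the Green's operator of the left factor is \emph{forced} to equal $T_2 G$, which depends only on $T_2$ and $G$—not on the choice of $\bspc_2$. Since the anti-isomorphism is a bijection, $G_1 = T_2 G$ determines $(T_1,\bspc_1)$ uniquely; explicitly, $\orth{\bspc_1} = \Img{T_2 G}$ from the Green's operator condition $\Img{G_1} = \orth{\bspc_1}$.

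Next I would verify that this single fixed left factor $(T_1,\bspc_1)$, with $G_1 = T_2 G$, actually satisfies $G_2 G_1 = G$ for \emph{every} regular subproblem $(T_2,\bspc_2)\le(T,\bspc)$, not merely the one produced above. The computation is $G_2 G_1 = G_2 T_2 G$, and the key point is that $G_2 T_2$ acts as the identity on $\Img{G_2} = \orth{\bspc_2}$: for $u = G_2 w$ one has $G_2 T_2 u = G_2 (T_2 G_2) w = u$. Because $\bspc_2 \le \bspc$ and the orthogonal is inclusion-reversing, $\Img{G} = \orth{\bspc} \le \orth{\bspc_2}$, so every value $Gf$ lies in $\orth{\bspc_2}$; thus $G_2 T_2 (Gf) = Gf$ and therefore $G_2 G_1 = G$. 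Reading this back through the anti-isomorphism yields the lifted factorization for the given $\bspc_2$.

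Finally, uniqueness is immediate once everything is phrased operator-side: if $(T_1,\bspc_1')$ also has the property, then $G_2 G_1' = G = G_2 G_1$, and applying $T_2$ on the left (using $T_2 G_2 = 1$) gives $G_1' = G_1$, whence $(T_1,\bspc_1') = (T_1,\bspc_1)$ since a Green's operator determines its boundary problem. The main obstacle I anticipate lies in the third paragraph: one must ensure that the candidate $(T_1,\bspc_1)$, although pinned down by a single lifting, genuinely works for \emph{all} admissible $\bspc_2$, and this hinges precisely on combining the inclusion $\bspc_2\le\bspc \Rightarrow \orth{\bspc} \le \orth{\bspc_2}$ with the identity $G_2 T_2\big|_{\orth{\bspc_2}} = \id$. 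Everything else reduces to routine manipulation of the relations $TG=1$ and $\Img{G}=\orth{\bspc}$.
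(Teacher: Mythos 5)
Your overall strategy---translating the lifted factorization through the anti-isomorphism~\eqref{eq:anti-isomorphism} into the operator identity $G_2 G_1 = G$, forcing $G_1 = T_2 G$ by left-multiplying with $T_2$, and then verifying $G_2 T_2 G = G$ for \emph{every} regular subproblem via the inclusion $\Img{G} = \orth{\bspc} \le \orth{\bspc_2} = \Img{G_2}$---is sound. Note that the paper itself gives no proof of Lemma~\ref{lem:division}; it only cites Prop.~31 of~\cite{RosenkranzRegensburger2008a}, and your Green's-operator argument is exactly in the spirit of that source. Your existence argument (Theorem~\ref{thm:fact-bp} to produce one regular lifting, then the verification in your third paragraph) is complete and correct.

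There is, however, a gap in the uniqueness step. The lemma asserts uniqueness among all \emph{boundary problems} $(T_1, \bspc_1')$ in the sense of Definition~\ref{def:boundary-problem}, and these include singular ones; your argument writes down the Green's operator $G_1'$ of the competitor and therefore silently assumes the competitor is regular, so as written you only prove uniqueness within~$\regprob$. The gap is easy to close, and without invoking any Green's operator on the left: if $(T_1, \bspc_1') \cdot (T_2, \bspc_2) = (T, \bspc)$ for one fixed regular subproblem, then by~\eqref{eq:bp-product} we have $\bspc_1' T_2 + \bspc_2 = \bspc$; composing on the right with~$G_2$ and using $T_2 G_2 = 1$ together with $\bspc_2 G_2 = O$ (which holds because $\Img{G_2} = \orth{\bspc_2}$) yields $\bspc_1' = \bspc G_2$. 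Thus the boundary space of \emph{any} competitor, regular or not, is forced, and it must coincide with that of your candidate. (Alternatively: the sum $\bspc_1' T_2 + \bspc_2$ is direct and $\beta \mapsto \beta T_2$ is injective since $T_2$ is surjective, whence $\dim \bspc_1' = \ord T_1$; then Lemma~\ref{lem:reg-factors} shows the competitor is regular, after which your argument applies verbatim.)
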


The \emph{notion of subproblem} was defined (under the name ``right
factor'', which is avoided here due to the ambiguity in cases like
Example~\ref{ex:embed-single-cond}) for regular boundary problems
in~\cite[Def.~29]{RosenkranzRegensburger2008a}, but it can be extended
naturally to all boundary problems. Hence a subproblem of a boundary
problem~$(T, \bspc) \in \allprob$ is any~$(T_2, \bspc_2) \in \allprob$
with~$T_2$ a right divisor of~$T$ and $\bspc_2$ a subspace
of~$\bspc$. We denote this relation by~$(T_2, \bspc_2) \le (T,
\bspc)$.

We conclude this section with a few remarks on \emph{algorithmic
  issues}. While at the present stage of our work we do not aim at a
computational realization of the localization, we see this
nevertheless as a mid-term goal. In fact, most of the results in this
paper are algorithmic, with two exceptions: (1) The Ore condition is
needed in~$\calg[\der]$ for monic operators; this is not supported by
customary packages but appears to be adaptable (see the remarks after
Proposition~\ref{prop:left-ext}). (2) One would need an algorithm for
determining the monomials required in
Definition~\ref{def:umbral-characters}. Once these two gaps are
filled, one should be able to compute in the localization since all
the constructions reviewed in this section are themselves
algorithmic~\cite{RosenkranzRegensburger2008a}:
\begin{itemize}
\item The \emph{operator ring}~$\intdiffop[\Phi]$ is defined as a
  quotient by a certain ideal of relations. This ideal is described by
  a noncommutative Gr\"obner basis for an ideal with infinitely many
  generators (on another view: a noetherian and confluent rewrite
  system).
\item If the \emph{ground algebra}~$\galg$ and the \emph{character
    set}~$\Phi$ are computable, the normal forms of~$\intdiffop[\Phi]$
  are computable as well. Typically, $\galg$ is an algorithmic
  fragment of~$C^\infty(\R)$ like the exponential
  polynomials~\cite[p.~176]{Rosenkranz2005}, while~$\Phi$ consists of
  finitely many point evaluations~$f \mapsto f(a)$.
\item The \emph{Green's operator} of a regular boundary problem (and
  hence the solutions for arbitrary forcing functions up to
  quadratures) can be computed as long as one has a fundamental system
  for the underlying homogenous differential equation. For the latter,
  one can in principle rely on the vast body of results from
  differential Galois theory~\cite{PutSinger2003}.
\item The \emph{multiplication and factorization} of boundary problems
  can be computed as long as the same is true for the constituent
  differential operators. In this way one can often solve higher-order
  boundary problems by decomposing them into smaller factors such as
  in a recent application to actuarial
  mathematics~\cite{AlbrecherConstantinescuPirsicEtAl2009}. For the
  algorithmic theory of factoring linear differential operators, we
  refer to~\cite{Grigoriev1990,Schwarz1989,Tsarev1996}.
\end{itemize}
As detailed in the Introduction, the algorithms addressed above are
\emph{implemented} in \mma\ and \mpl\ packages.

\section{Umbral Boundary Conditions}
\label{sec:umbral-bc}
% ==========================================

We have seen that for a given integro-differential algebra~$\galg$ and
character set~$\Phi$, the natural choice of boundary conditions is the
\emph{Stieltjes conditions}, defined
in~\cite{RosenkranzRegensburger2008a} as the right
ideal~$\mathopen|\Phi)$ of~$\intdiffop[\Phi]$ and characterized by
their normal forms~\eqref{eq:stieltjes-cond}. The choice
of~$\mathopen|\Phi)$ is vindicated by Theorem~\ref{thm:fact-bp}, which
asserts that Stieltjes conditions are sufficient for describing
arbitrary factor problems (and they are clearly also sufficient for
multiplying boundary problems). In fact, Stieltjes conditions with
nonzero global part appear in the left factor problem even if the
original boundary problem has only local conditions; see
e.g. Example~33 in~\cite{RosenkranzRegensburger2008a}.

For the purpose of localization we must isolate a subclass of
Stieltjes conditions. First of all, we must distinguish carefully
between the zero condition~$0 \in \mathopen|\Phi)$ and degenerate
boundary conditions that only \emph{act} as zero: We call a Stieltjes
condition~$\beta$ \emph{degenerate} if~$\phi(f) = 0$ for all~$f \in
\galg$. In the standard setting of Example~\ref{ex:standard}, there
are plenty of degenerate boundary conditions. If~$f$ is a bump function
supported, say, on an interval disjoint from~$[0, 1]$, the Stieltjes
condition~$\evl_1 \cum f = \cum_0^1 \, f$ is clearly degenerate. In
contrast, the integro-differential subalgebra of analytic
functions~$C^\omega(\R)$ does not have degenerate global conditions of
this form as we shall see below (Example~\ref{ex:complete}).

The subclass of Stieltjes conditions to be chosen must be so as to
ensure a sufficient supply of regular boundary problems. In
particular, we shall find ourselves in the situation of embedding a
singular boundary problem~$(T, \bspc)$ into a surrounding regular
problem (Lemma~\ref{lem:regularization}). Hence we must enlarge~$T \in
\diffop$ by a suitable differential operator~$\tilde{T} \in \diffop$
and the boundary space~$\bspc$ by \emph{new boundary
  conditions}~$\tilde{\beta}_1, \tilde{\beta}_2, \dots$ in such a
manner that the resulting evaluation matrix is regular
(Lemma~\ref{lem:regularity-test}).

The key for solving this problem is found in the fortunate fact every
integro-differential algebra~$\galg$ contains the polynomial ring
(Proposition~\ref{prop:pol-ring}), so the \emph{initial value problem}
\begin{equation*}
  (\der^n, [\evl, \evl \der, \dots, \evl \der^{n-1}])
\end{equation*}
is a natural choice for extending the given boundary problem~$(T,
\bspc)$. Then the monomials~$1, x/1!, \dots, x^{n-1}/(n-1)!$ are a
fundamental system by Proposition~\ref{prop:pol-ring}, and the
corresponding evaluation matrix is~$I_n$. But this is not enough for
embedding singular problems since the evaluation matrix of the
surrounding problem involves combining the \emph{given} boundary
conditions~$\beta \in \bspc$ with the monomials of the fundamental
system~\eqref{eq:pol-fund-sys}. Indeed, the crucial step in the proof
of Lemma~\ref{lem:regularization} needs that~$\beta(x^m) \ne 0$
for~\emph{some} monomial $x^m \in K[x]$. This is the motivation for
the following definition.

\begin{definition}
  \label{def:umbral-characters}
  A Stieltjes condition~$\beta \in \intdiffop[\Phi]$ is called
  \emph{umbral} if~$\beta(x^m) \ne 0$ for some monomial~$x^m \in
  K[x]$. Furthermore, we call~$\Phi$ an \emph{umbral character set} if
  every nondegenerate Stieltjes condition is umbral.
\end{definition}

The reason for our terminology is that there is an interesting link
between Stieljes conditions~$\beta \in \intdiffop[\Phi]$
with~$\beta(x^m) \ne 0$ and the \emph{umbral calculus}. Indeed, every
umbral condition defines a nontrivial shift-invariant operator
(Proposition~\ref{prop:umbral-cond}). This is clear for the local
parts~$\phi \der^n$ but needs some justification for global
terms~$\phi \cum f$ with arbitrary~$f \in \galg \supset K[x]$.  The
point is that one may apply the following special case of the
well-known ``antiderivative Leibniz rule''. (In the general setting of
analytic functions one writes~$\cum fg$ as an infinite series of
iterated integrals of~$f$ times iterated derivatives of~$g$. Here we
have~$g=x^n$ so that only finitely many derivatives are nonzero and
the series terminates.)

\begin{lemma}
  \label{lem:int-part-pol}
  In any integro-differential algebra~$(\galg, \der, \cum)$, we have
  the formula
  \begin{equation}
    \label{eq:int-part-pol}
    \cum fx^n = \sum_{k=0}^n (-1)^k \fafac{n}{k} \, x^{n-k} f^{(-k-1)}
  \end{equation}
  for all $f \in \galg$. Here $f^{-k} \; (k \ge 0)$ is defined by
  $f^{(0)} = f$ and $f^{(-k-1)} = \cum f^{(-k)}$.
\end{lemma}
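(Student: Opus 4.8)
The plan is to prove~\eqref{eq:int-part-pol} by induction on~$n$, viewing it as the terminating case of the antiderivative Leibniz rule. The only ingredient beyond bookkeeping is an integration-by-parts identity obtained algebraically from the axioms. The section axiom~\eqref{eq:section-axiom} together with the definition~$\evl = 1 - \cum\der$ yields~$\cum h' = h - \evl h$ for every~$h \in \galg$; taking~$h = uv$ and expanding~$(uv)'$ by the Leibniz axiom~\eqref{eq:leibniz-axiom} gives the parts formula
\begin{equation*}
  \cum u' v + \cum u v' = uv - \evl(uv) \qquad (u, v \in \galg).
\end{equation*}
The base case~$n = 0$ is immediate: $\cum f = f^{(-1)}$ by definition equals the lone summand~$(-1)^0 \fafac{0}{0}\, x^0 f^{(-1)}$.

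For the inductive step I would apply the parts formula with~$u = f^{(-1)}$ and~$v = x^{n+1}$. Since~$u' = (\cum f)' = f$ by~\eqref{eq:section-axiom} and~$v' = \der x^{n+1} = (n+1)\, x^n$ in the genuine polynomial ring~$K[x] \le \galg$ of Proposition~\ref{prop:pol-ring}, the identity rearranges to
\begin{equation*}
  \cum f x^{n+1} = f^{(-1)} x^{n+1} - \evl(f^{(-1)} x^{n+1}) - (n+1)\, \cum f^{(-1)} x^n.
\end{equation*}
The crucial simplification is that the boundary term vanishes: because~$x = \cum 1$ lies in~$\init$ and~$\init$ is an ideal, we have~$f^{(-1)} x^{n+1} \in \init = \Ker{\evl}$, so~$\evl(f^{(-1)} x^{n+1}) = 0$. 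Applying the induction hypothesis to~$\cum f^{(-1)} x^n$ (that is, the claimed formula for~$n$ with~$f$ replaced by~$f^{(-1)}$, using~$(f^{(-1)})^{(-k-1)} = f^{(-k-2)}$) turns the last summand into an explicit sum.

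What remains is combinatorial reassembly. After reindexing the resulting sum by~$j = k+1$, the coefficients merge through the falling-factorial recurrence~$(n+1)\, \fafac{n}{j-1} = \fafac{(n+1)}{j}$, while the leading term~$x^{n+1} f^{(-1)}$ furnishes exactly the missing~$j = 0$ summand; together these recover the right-hand side of~\eqref{eq:int-part-pol} at~$n+1$. I expect no real obstacle. The one non-routine simplification is the vanishing of the boundary term~$\evl(f^{(-1)} x^{n+1})$, which rests on~$\init$ being an ideal (equivalently, on~$\evl$ being a character), a structural feature present in every integro-differential algebra through the differential Baxter axiom~\eqref{eq:diff-baxter-axiom}; were this contribution not to split off, the recursion would not close, so it is the linchpin of the argument. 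Notably, the remaining steps are pure algebra over~$K[x] \le \galg$ and do not require the algebra to be ordinary, matching the generality ``in any integro-differential algebra'' claimed in the statement. As an alternative one could instead verify~\eqref{eq:int-part-pol} by checking that both sides share the derivative~$f x^n$ and the evaluation~$0$, invoking the decomposition~\eqref{eq:direct-sum}, but the inductive route keeps the falling factorials transparent.
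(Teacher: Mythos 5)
Your induction is correct and lands on exactly the same recursion as the paper, namely $\cum f x^{n+1} = f^{(-1)}x^{n+1} - (n+1)\cum f^{(-1)}x^n$ followed by the reindexing $j=k+1$ and the identity $(n+1)\fafac{n}{j-1} = \fafac{(n+1)}{j}$; the only divergence is how that one identity is obtained. The paper gets it in a single line from the pure Baxter axiom~\eqref{eq:pure-baxter-axiom}, writing $x^{n+1}=(n+1)\cum x^n$ and expanding $(\cum f)(\cum (n+1)x^n)$, with no evaluation term ever appearing. You instead derive an integration-by-parts formula $\cum u'v + \cum uv' = uv - \evl(uv)$ from the section and Leibniz axioms and then must argue that the boundary term $\evl(f^{(-1)}x^{n+1})$ vanishes, which you correctly ground in $\init$ being an ideal (equivalently, multiplicativity of~$\evl$, a consequence of the differential Baxter axiom). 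Both are valid in any integro-differential algebra, and you are right that ordinariness is not needed. The trade-off is that the paper's route uses only the weaker Rota-Baxter structure, so the lemma would survive in that greater generality, whereas your route genuinely invokes the integro-differential (multiplicative-evaluation) axiom; on the other hand your parts formula makes the ``terminating antiderivative Leibniz rule'' reading of the statement explicit. Note also that your boundary term can be killed without the ideal property: $f^{(-1)}x^{n+1} = (\cum f)\bigl(\cum (n+1)x^n\bigr)$ lies in $\Img{\cum} = \Ker{\evl}$ directly by the pure Baxter axiom, which is in effect the paper's shortcut.
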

\begin{proof}
  This is essentially the special case~$m=1$ of Equation~(14)
  in~\cite{RegensburgerRosenkranzMiddeke2009}, but the proof is short
  enough to present here. We use induction over~$n$. The base
  case~$n=0$ is clear, so assume~\eqref{eq:int-part-pol} for~$n\ge 0$;
  we prove it for~$n+1$. Since $x^{n+1} = (n+1) \, \cum x^n$ we have
  \begin{equation*}
    \cum f x^{n+1} = (n+1) \, \cum f \cum x^n = f^{(-1)} x^{n+1} -
    (n+1) \, \cum f^{(-1)} x^n
  \end{equation*}
  by the pure Baxter axiom~\eqref{eq:pure-baxter-axiom}. Using the
  induction hypothesis, the right summand becomes
  \begin{equation*}
    \sum_{k=1}^{n+1} (-1)^k \fafac{(n+1)}{k} \, x^{n+1-k} f^{(-k-1)} 
  \end{equation*}
  after an index transformation. Adding the extra term~$f^{(-1)}
  x^{n+1}$ means extending the summation to~$k=0, \dots, n+1$,
  which yields~\eqref{eq:int-part-pol} for~$n+1$.
\end{proof}

\begin{lemma}
  \label{lem:umbral-global-cond}
  Let~$\beta = \phi \cum f$ be a global condition
  in~$\intdiffop[\Phi]$. Then we have~$\beta = \phi \tilde{\beta}$ as
  a functional~$K[x] \rightarrow K$, where
  \begin{equation}
    \label{eq:umbral-exp}
    \tilde{\beta} = \sum_{k=0}^\infty b_k  \, \der^k : \quad
    K[x] \rightarrow K[x]
  \end{equation}
  is a shift-invariant operator with expansion coefficients~$b_k =
  (-1)^k \phi(f^{(-k-1)})$.
\end{lemma}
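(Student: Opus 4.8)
The plan is to reduce the asserted identity $\beta = \phi\tilde\beta$ to a check on the monomial basis of $K[x]$ and then let the antiderivative formula of Lemma~\ref{lem:int-part-pol} do the work. First I would observe that $\tilde\beta = \sum_{k=0}^\infty b_k \der^k$ is a well-defined operator $K[x] \rightarrow K[x]$: on a polynomial of degree $n$ every term with $k > n$ annihilates it, so the nominally infinite sum truncates to a finite one. Since $\tilde\beta$ is a power series in $\der$ and $\der$ commutes with every shift operator, $\tilde\beta$ commutes with shifts as well, so it is shift-invariant; this part of the claim is immediate from the form of $\tilde\beta$ and needs no separate argument. The coefficients $b_k = (-1)^k \phi(f^{(-k-1)})$ are fixed by the statement, so the only substantive assertion is the equality of functionals $\beta = \phi\tilde\beta$ on $K[x]$, which by $K$-linearity need only be verified on each $x^n$.

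For the left-hand side I would unfold $\beta(x^n) = \phi(\cum f x^n)$ and apply Lemma~\ref{lem:int-part-pol} to expand $\cum f x^n = \sum_{k=0}^n (-1)^k \fafac{n}{k}\, x^{n-k} f^{(-k-1)}$. Applying $K$-linearity of $\phi$ term by term and then its multiplicativity to factor each summand as $\phi(x^{n-k} f^{(-k-1)}) = \phi(x^{n-k})\,\phi(f^{(-k-1)})$ yields the expression $\beta(x^n) = \sum_{k=0}^n (-1)^k \fafac{n}{k}\, \phi(x^{n-k})\,\phi(f^{(-k-1)})$.

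For the right-hand side I would compute $\tilde\beta(x^n)$ directly. Since $\der^k x^n = \fafac{n}{k}\, x^{n-k}$ in the genuine polynomial ring $K[x]$ (Proposition~\ref{prop:pol-ring}), we obtain $\tilde\beta(x^n) = \sum_{k=0}^n b_k \fafac{n}{k}\, x^{n-k}$, and evaluating $\phi$ gives $\phi(\tilde\beta(x^n)) = \sum_{k=0}^n (-1)^k \phi(f^{(-k-1)})\, \fafac{n}{k}\, \phi(x^{n-k})$. Comparing this with the expression for $\beta(x^n)$ shows that the two agree summand by summand, completing the verification on monomials and hence on all of $K[x]$.

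The computation is essentially forced once Lemma~\ref{lem:int-part-pol} is in hand, so there is no deep obstacle; the one point demanding care is the correct use of the multiplicativity of $\phi$. The falling factorials arising from the integration-by-parts expansion of $\cum f x^n$ and those produced by $\der^k x^n$ must be recognized as the same $\fafac{n}{k}$, and the coefficients $b_k$ are designed to carry exactly the $\phi(f^{(-k-1)})$ part, while the $\phi(x^{n-k})$ part is supplied only by evaluating $\phi$ \emph{after} applying $\tilde\beta$. Keeping this bookkeeping straight---rather than any conceptual difficulty---is the crux of the argument.
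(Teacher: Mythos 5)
Your proof is correct and follows essentially the same route as the paper: both reduce the identity $\beta = \phi\tilde{\beta}$ to monomials, expand $\cum f x^n$ via Lemma~\ref{lem:int-part-pol}, and use the multiplicativity of the character $\phi$ to match coefficients with $\phi\der^k(x^n)$. The only cosmetic difference is that the paper cites an umbral-calculus result for the shift-invariance of $\tilde{\beta}$, whereas you give the (equally valid) direct argument that a power series in $\der$ commutes with shift operators.
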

\begin{proof}
  By a well-known result of the umbral
  calculus~\cite[Thm.~2.1.7]{DiBucchianico1998}, incidentally also
  presented in~\cite[Prop.~92]{Rosenkranz1997}, the
  operator~$\tilde{\beta}$ is shift-invariant. For seeing that~$\beta
  = \phi \tilde{\beta}$, we apply Lemma~\ref{lem:int-part-pol} to
  obtain $\beta(x^n) = \sum_k b_k \, \phi(\fafac{n}{k} \, x^{n-k}) =
  \sum_k b_k \, \phi \der^k(x^n)$.
\end{proof}

The \emph{operator expansion}~\eqref{eq:umbral-exp} allows us to
associate a shift invariant operator with a Stieltjes condition of the
special form~$\beta = \phi \cum f$. But this association generalizes
immediately to arbitrary Stieltjes conditions since the local
conditions are unproblematic.

\begin{proposition}
  \label{prop:umbral-cond}
  Let~$\beta$ be a Stieltjes condition in~$\intdiffop[\Phi]$. Then
  there is an \emph{associated shift-invariant operator}
  \begin{equation}
    \label{eq:stieltjes-shift-inv}
    \tilde{\beta} = \sum_{k=0}^\infty b_k \der^k : \quad
    K[x] \rightarrow K[x]
  \end{equation}
  with coefficients~$b_k = \beta(x^k/k!)$ such that~$\beta = \evl \,
  \tilde{\beta}$. Clearly, the associated operator~$\tilde{\beta}$ is
  nonzero iff~$\beta$ is an umbral condition.
\end{proposition}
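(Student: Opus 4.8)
The plan is to reduce a general Stieltjes condition to the two building blocks already treated, exploiting the fact that on the polynomial subring every character is an evaluation composed with a shift. First I would split $\beta$ into its local and global parts according to the normal form~\eqref{eq:stieltjes-cond}; by $K$-linearity it then suffices to attach to each summand a shift-invariant operator $\theta$ on $K[x]$ with that summand equal to $\evl\,\theta$, since the associated operator $\tilde\beta$ is the finite sum of these $\theta$ and a sum of shift-invariant operators is again shift-invariant.

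The key observation I would record next is that the restriction of any character $\phi \in \Phi$ to $K[x]$ is multiplicative, hence $\phi|_{K[x]} = \evl_a$ with $a = \phi(x)$, so that $\phi = \evl\,\shift{a}$ on $K[x]$, where $\shift{a}\colon p(x)\mapsto p(x+a)$ is the shift by $a$. Because $\shift{a}$ commutes with $\der$ and shifts commute among themselves, $\shift{a}$ is itself shift-invariant. For a local summand this gives at once $\phi\der^i = \evl\,(\shift{a}\der^i)$ with $\shift{a}\der^i$ shift-invariant---this is what makes ``the local conditions unproblematic''. For a global summand I would invoke Lemma~\ref{lem:umbral-global-cond} to write $\phi\cum f = \phi\,\tilde\gamma$ on $K[x]$ with $\tilde\gamma$ shift-invariant, and then absorb the character in the same way: $\phi\,\tilde\gamma = \evl\,(\shift{a}\tilde\gamma)$, with $\shift{a}\tilde\gamma$ shift-invariant as a composite. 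Summing over all summands yields $\beta = \evl\,\tilde\beta$ with $\tilde\beta$ shift-invariant.

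By the umbral expansion theorem already used for Lemma~\ref{lem:umbral-global-cond}, every shift-invariant operator on $K[x]$ is of the form $\tilde\beta = \sum_{k\ge0} b_k\,\der^k$, so it remains to read off the coefficients. Applying $\beta = \evl\,\tilde\beta$ to $x^k/k!$ and using $\der^j x^k = \fafac{k}{j}\,x^{k-j}$ together with $\evl(x^{k-j}) = \delta_{jk}$, every term collapses except $j=k$, leaving $\beta(x^k/k!) = b_k$; this is the asserted coefficient formula and pins down $\tilde\beta$ uniquely. Finally, $\tilde\beta = 0$ holds exactly when all $b_k = \beta(x^k/k!)$ vanish, i.e.\ when $\beta(x^m)=0$ for every monomial $x^m$, which is the negation of umbrality; hence $\tilde\beta\ne 0$ iff $\beta$ is umbral.

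The one delicate point, and where I expect the real bookkeeping, is that the passage from $\phi$ to $\evl$ conceals an infinite expansion: the shift at a nonzero base point is $\shift{a}=\sum_k (a^k/k!)\,\der^k$, so even a local summand generally contributes an infinite series in $\der$. The hard part will be to keep this honest---checking that these series are genuine operators on $K[x]$ (each acting as a finite sum on any fixed polynomial), that the finite sum over summands preserves shift-invariance, and that the coefficient extraction above is unaffected. The umbral-calculus input that shift-invariant operators are precisely the formal series in $\der$ does the real work and is already at hand.
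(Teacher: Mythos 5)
Your proposal is correct and follows essentially the same route as the paper: decompose $\beta$ via the normal form~\eqref{eq:stieltjes-cond}, write each character as $\phi = \evl\, S_{\bar\phi}$ on $K[x]$ with $S_{\bar\phi}$ the shift by $\bar\phi = \phi(x)$, handle the global parts via Lemma~\ref{lem:umbral-global-cond}, and read off the coefficients $b_k = \beta(x^k/k!)$ from the umbral expansion theorem. The ``delicate point'' you flag is harmless for exactly the reason you give---formal series in $\der$ act as finite sums on each polynomial---and the paper does not dwell on it either.
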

\begin{proof}
  Let~$\shift{\phi}\colon K[x] \rightarrow K[x]$ be the shift
  operator~$f(x) \mapsto f(x+\bar{\phi})$ with~$\bar{\phi} = \phi(x)
  \in K$. Using the normal form~\eqref{eq:stieltjes-cond} we can write
  the given boundary condition as
  \begin{equation*}
    \tilde{\beta} =  \sum_{\phi \in \Phi} \phi (T_\phi +
    \tilde{\beta}_\phi) = \evl \sum_{\phi \in \Phi} S_\phi (T_\phi +
    \tilde{\beta}_\phi),
  \end{equation*}
  where~$T_\phi = \sum_i a_{\phi,i} \der^i$ is clearly shift-invariant
  while~$\tilde{\beta}_\phi$ is the shift-invariant operator
  corresponding to~$\beta_\phi = \phi \cum f_\phi$ according to
  Lemma~\ref{lem:umbral-global-cond}. Then the terms~$S_\phi (T_\phi +
  \tilde{\beta}_\phi)$ in the second sum are shift-invariant, hence we
  have~$\beta = \evl \tilde{\beta}$ with the shift-invariant operator
  \begin{equation*}
    \tilde{\beta} = \sum_{\phi \in \Phi} S_\phi (T_\phi +
    \tilde{\beta}_\phi),
  \end{equation*}
  and the formula~$b_k = \beta(x^k/k!) = \evl \tilde{\beta}(x^k/k!)$
  for its expansion coefficients follows from the general result
  referred to in the proof of Lemma~\ref{lem:umbral-global-cond}.
\end{proof}

Umbral boundary conditions appear to be \emph{abundant}, at least in
the cases most crucial to us, especially in the~$C^\infty$ setting
(containing many important integro-differential subalgebras---notably
the analytic and holonomic functions as well as the exponential
polynomials).

\begin{proposition}
  \label{prop:cinf-weierstrass}
  In the standard setting of Example~\ref{ex:standard}, the point
  evaluations form an umbral character set.
\end{proposition}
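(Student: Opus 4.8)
The plan is to prove the contrapositive of the defining condition in Definition~\ref{def:umbral-characters}: I will show that any Stieltjes condition $\beta$ over the point evaluations $\Phi = \{\evl_a \mid a \in \R\}$ that fails to be umbral must already be degenerate. By definition, $\beta$ is non-umbral precisely when $\beta(x^m) = 0$ for every monomial, and since $\beta$ is linear this is equivalent to $\beta(p) = 0$ for every polynomial $p \in \R[x]$. Writing out the normal form~\eqref{eq:stieltjes-cond} with $\phi = \evl_a$, such a $\beta$ acts on $u \in C^\infty(\R)$ as a finite sum
\[
  \beta(u) = \sum_j c_j\, u^{(i_j)}(a_j) + \sum_k \int_0^{a_k} f_k(\xi)\, u(\xi)\, d\xi,
\]
with real constants $c_j$, points $a_j, a_k \in \R$, and coefficient functions $f_k \in C^\infty(\R)$. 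My goal is then to deduce $\beta(u) = 0$ for \emph{all} $u$, i.e.\ that $\beta$ is degenerate.

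The crucial structural observation is that a single Stieltjes condition has \emph{finite order}: there is an integer $M$ bounding all derivative orders $i_j$ occurring in the local part. Since the finitely many evaluation points $a_j$ and integration endpoints $a_k$, together with $0$, all lie in some compact interval $I = [-R, R]$, the value $\beta(u)$ depends only on the restriction $u|_I$, and moreover $\beta$ extends to a \emph{continuous} linear functional on $C^M(I)$ equipped with the norm $\norm{v}_{C^M} = \max_{0 \le k \le M} \sup_{x \in I} \abs{v^{(k)}(x)}$. Indeed, each local term obeys $\abs{c_j\, u^{(i_j)}(a_j)} \le \abs{c_j}\, \norm{u}_{C^M}$, while each global term is bounded by $(\sup_I \abs{f_k})\,\abs{a_k}\,\sup_I \abs{u}$, hence also by a constant times $\norm{u}_{C^M}$; so $\beta$ is bounded on $C^M(I)$.

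It then remains to invoke the Weierstrass approximation theorem in its $C^M$-version: the polynomials are dense in $C^M(I)$. Given $v \in C^M(I)$, I approximate the continuous function $v^{(M)}$ uniformly on $I$ by a polynomial $q$ (classical Weierstrass), and take $p$ to be the $M$-fold antiderivative of $q$ whose integration constants are chosen so that $p^{(k)}$ and $v^{(k)}$ agree at the left endpoint of $I$ for $k < M$. Then $p^{(M)} = q \to v^{(M)}$ uniformly, and successive integration propagates this to $p^{(k)} \to v^{(k)}$ uniformly for every $k \le M$, whence $p \to v$ in the $C^M$ norm. Since $\beta$ vanishes on all polynomials and is continuous on $C^M(I)$, it must vanish on the closure of the polynomials, namely all of $C^M(I)$. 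In particular $\beta(u) = 0$ for every $u \in C^\infty(\R)$, so $\beta$ is degenerate, which establishes the contrapositive and hence the proposition.

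The main obstacle is exactly the $C^M$-density step, which is what forces the appeal to Weierstrass (and explains the label of the proposition); the reduction to finite order is what makes this applicable, since it confines $\beta$ to finitely many derivatives and thereby to continuity on a single space $C^M(I)$. The global (integral) terms, by contrast, are harmless, being already continuous in the sup norm, so they require no separate treatment.
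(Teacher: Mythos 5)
Your proof is correct and follows essentially the same strategy as the paper's: reduce to a compact interval $[-R,R]$ containing all evaluation points and integration endpoints, observe that $\beta$ is a continuous functional on $C^M([-R,R])$ (with the same operator-norm estimates for the local and global parts), and conclude from the density of polynomials in the $C^M$ topology that a Stieltjes condition vanishing on $\R[x]$ vanishes everywhere. The single point of divergence is how that density is justified: the paper cites a generalization of the Weierstrass approximation theorem due to Nachbin, whereas you derive the $C^M$-density elementarily by applying classical Weierstrass to $v^{(M)}$ and integrating $M$ times with integration constants matched at the left endpoint, so that uniform convergence propagates down through all derivatives. This is a correct and genuinely self-contained replacement for the external reference, at the cost of a few extra lines; the paper's citation buys brevity, your derivation buys independence from a little-known source. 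Your formulation as a contrapositive (non-umbral implies degenerate) is logically identical to the paper's direct argument (nondegenerate implies umbral), so nothing is lost there either.
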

\begin{proof}
  Consider an arbitrary nondegenerate Stieltjes condition
  \begin{equation*}
    \beta\colon \galg \rightarrow \R, \quad
    \beta(u) = \sum_{\phi \in \Phi} \sum_{i=0}^k a_{\phi,i} \,
    u^{(i)}(\phi) + \sum_{\phi \in \Phi}
    \int_0^\phi f_\phi(\xi) \, u(\xi) \, d\xi
  \end{equation*}
  so that there is a function~$u \in \galg$ with~$\beta(u) \ne
  0$. Let~$R$ be the maximum of $\abs{\phi}$ for all~$\phi$ with
  $a_{\phi,i} \ne 0$ or $f_\phi \ne 0$. We consider now the Banach
  space~$C^k(K)$ on the compact interval~$K = [-R, R]$. Its
  norm~$\norm{\cdot}_k$ is given by
  \begin{equation*}
    \norm{f}_k = \sum_{i=0}^k \norm{u^{(i)}}_\infty
  \end{equation*}
  for all~$f \in C^k(K)$. There is a little known generalization of
  the Weierstrass approximation theorem due to
  Nachbin~\cite{Nachbin1949}, which asserts (in a simple special case)
  that~$\R[x]$ is dense in~$C^k(K)$. Hence we may choose a polynomial
  sequence~$p_n$ that converges to~$u \in \galg \le C^k(K)$ in the
  $C^k$ topology.

  One checks immediately that~$\beta\colon C^k(K) \rightarrow \R$ is a
  continous functional with respect to the $C^k$~norm. In detail, one
  has~$\abs{\beta(u)} \le C \norm{u}_k$ with operator-norm bound
  \begin{equation*}
    C = \sum_{\phi \in \Phi} \sum_{i=0}^k a_{\phi,i} + \sum_{\phi \in
      \Phi} \abs{\phi} \, \norm{f_\phi}_\infty.
  \end{equation*}
  Therefore we have~$\beta(p_n) \rightarrow \beta(u) \ne
  0$. This is impossible if~$\beta$ vanishes on all of~$\R[x]$. Hence
  there is some~$p \in \R[x]$ with~$\beta(p) \ne 0$. Clearly there
  exists a smallest monomial~$x^m$ in~$p$ with~$\beta(x^m) \ne 0$.
\end{proof}

Not every character set is umbral, though. It is natural to introduce
the following \emph{necessary conditions} for a character set~$\Phi$
to be umbral (we write~$\bar{\phi}$ for the canonical value $\phi(x)
\in K$ of a character $\phi \in \Phi$):
\begin{enumerate}
\item The character set~$\Phi$ must clearly be \emph{separative} in
  the sense that~$\bar{\phi} = \bar{\chi}$ implies~$\phi =
  \chi$. Otherwise~$\beta = \phi - \chi$ is a nonzero Stieltjes
  condition with~$\beta(K[x]) = 0$.
\item Every character $\phi \in \Phi$ must be~\emph{complete} in the
  sense that every global condition of the form~$\beta = \phi \cum f$
  is umbral whenever it is nondegenerate. This may also be expressed
  as~$f \perp_\phi K[x] \Rightarrow f = 0$. Here orthogonality refers
  to the nondegenerate bilinear form~$\inner{f}{g} = \phi \cum fg$. If
  this bilinear form is positive definite with~$K = \R$ or~$K = \C$,
  we have a pre-Hilbert space~$(\galg, \inner{\cdot}{\cdot}_\phi)$.
  Following the terminology of~\cite[V.24]{Bourbaki1987}, completeness
  of~$\phi$ then means that any $\phi$-orthonormal basis of~$K[x]$ is
  complete in~$(\galg, \inner{\cdot}{\cdot}_\phi)$.
\end{enumerate}
These conditions are most likely not sufficient. At the moment we
cannot give a counterexample for corroborating this claim. But it is
intuitively clear that the completeness properties associated with
each~$\phi$ separately cannot prevent linear dependencies between the
actions of distinct global conditions~$\phi \cum f$. It remains an
interesting task to formulate stronger conditions on~$\Phi$ that
ensure an umbral character set in a natural way.

\begin{example}
  \label{ex:separative}
  As an example of a \emph{nonseparative character set}, consider the
  exponential polynomials~$\exppoly$ with the nonstandard
  character~$\phi$ defined by~$\phi(e^x) = 1$ and~$\phi(x^n) = 1$ for
  all~$n$, effectively mixing evaluation at~$1 \in K$ for monomials
  with evaluation at~$0 \in K$ for the exponential. Choosing~$K = \R$
  for simplicity, let~$\evl$ be the honest evaluation~$f \mapsto
  f(1)$. Then clearly~$\phi$ and~$\evl$ coincide on~$K[x]$ while they
  are in fact distinct characters on~$\exppoly$ since~$\phi(e^x) = 1
  \ne e = \evl(e^x)$.
\end{example}

This shows that separativity is necessary but not
sufficient. Nevertheless, it ensures that all \emph{local boundary
  conditions} are indeed umbral.

\begin{proposition}
  \label{prop:local-bc-umbral}
  Let~$\Phi$ be a separative character set for an integro-differential
  algebra~$(\galg, \der, \cum)$. Then every local boundary condition
  is umbral.
\end{proposition}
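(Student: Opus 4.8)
The plan is to reduce the claim to the nonvanishing of the shift-invariant operator associated to $\beta$ by Proposition~\ref{prop:umbral-cond}, and then to pass to symbols, where separativity turns into distinctness of exponents. Write a nonzero local condition in normal form as $\beta = \sum_{\phi \in \Phi} \phi\, T_\phi$ with $T_\phi = \sum_i a_{\phi,i}\, \der^i$ and only finitely many $a_{\phi,i} \neq 0$; since the global parts vanish, Proposition~\ref{prop:umbral-cond} gives $\beta = \evl\,\tilde\beta$ with associated operator $\tilde\beta = \sum_{\phi} \shift{\phi}\, T_\phi$, and tells us that $\beta$ is umbral precisely when $\tilde\beta \neq 0$. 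Thus it suffices to show $\tilde\beta \neq 0$ as soon as some $T_\phi \neq 0$.

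First I would make the action on $K[x]$ explicit. Since each $\phi$ is multiplicative we have $\phi(x^m) = \bar\phi^{\,m}$, so a direct computation of $\beta$ on monomials, packaged into the exponential generating function $\sum_n \beta(x^n)\, z^n/n!$, yields the symbol
\[
  F(z) = \sum_{\phi \in \Phi} T_\phi(z)\, e^{\bar\phi z},
\]
which is exactly the symbol of $\tilde\beta$ under the identification of shift-invariant operators with formal power series in $\der$ (here $\shift{\phi}$ corresponds to $e^{\bar\phi z}$). Because $\beta$ is umbral iff some $\beta(x^n) \neq 0$ iff $F \neq 0$, the whole proposition comes down to showing $F \neq 0$ whenever not all $T_\phi$ vanish. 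At this point separativity enters: by assumption distinct characters $\phi$ have distinct canonical values $\bar\phi = \phi(x)$, so the exponents appearing in $F$ are pairwise distinct.

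The crux---and the only non-formal step---is the classical linear independence of exponential polynomials: for pairwise distinct $\lambda_1, \dots, \lambda_r \in K$ and polynomials $p_1, \dots, p_r \in K[z]$, the relation $\sum_j p_j(z)\, e^{\lambda_j z} = 0$ forces every $p_j = 0$. I would prove this by induction on $r$, applying the operator $d/dz - \lambda_r$ repeatedly: it strictly lowers the degree of the $\lambda_r$-summand while preserving the degrees of the others (their leading coefficients are merely scaled by the nonzero factors $\lambda_j - \lambda_r$), so a high enough power annihilates the $\lambda_r$-term and the inductive hypothesis disposes of the rest. Applied to $F$ with $\lambda_j = \bar\phi$, this gives $F \neq 0$, hence $\tilde\beta \neq 0$, hence $\beta$ is umbral. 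The argument uses $\operatorname{char} K = 0$ (already implicit in Proposition~\ref{prop:pol-ring}), both for $e^{\bar\phi z} \in K[[z]]$ to make sense and for the derivative step; I expect this linear-independence lemma to be the main obstacle, though it is entirely standard.
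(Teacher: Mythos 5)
Your proof is correct, but it takes a genuinely different route from the paper's. The paper argues directly on a finite linear system: it applies $\beta$ to the $n=rs$ monomials $1, x, x^2/2!, \dots, x^{n-1}/(n-1)!$ and shows the resulting coefficient matrix is a generalized (confluent) Vandermonde matrix whose determinant it evaluates in closed form as $V(r)^{s^2}\,\superfac{s-1}^r/\superfac{n-1}$, citing Flowe--Harris and Krattenthaler; separativity makes the Vandermonde factor nonzero, so the only local condition vanishing on $K[x]$ is the zero condition. You instead dualize to the symbol $F(z)=\sum_\phi T_\phi(z)\,e^{\bar\phi z}$ of the associated shift-invariant operator and invoke the linear independence of exponential polynomials, proved by the standard $(d/dz-\lambda_r)$ induction. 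The two arguments are two faces of the same fact (nonvanishing of the confluent Vandermonde determinant is equivalent to that linear independence), but yours is more self-contained---no appeal to the determinant literature---and arguably softer, while the paper's yields an explicit determinant value and pins down exactly how many monomials must be tested ($n=rs$), which matters for the algorithmic aims stated at the end of Section~\ref{sec:monoid-bp}. Your generating-function computation $\sum_n\beta(x^n)z^n/n!=\sum_\phi T_\phi(z)e^{\bar\phi z}$ checks out, and your remark that $\operatorname{char}K=0$ is needed applies equally to the paper's proof (the monomials $x^k/k!$ and the superfactorials in the denominator require it). One cosmetic point: as stated, the proposition cannot apply to the literal zero condition; like the paper, you are really proving that a local condition in normal form with some nonzero coefficient is umbral, which is the intended reading.
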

\begin{proof}
  Using~\eqref{eq:stieltjes-cond}, every local boundary condition can
  be written in the form
  \begin{equation*}
    \beta = \sum_{i=1}^r \sum_{j=1}^s a_{ij} \, \phi_i \der^{j-1}.
  \end{equation*}
  Assuming~$\beta(K[x]) = 0$, we show that all coefficients~$a_{ij}$
  are zero. Collecting them in the vector
  \begin{equation*}
    a = (a_{1,1}, a_{1,2}, \ldots, a_{1,s}; \; \ldots; \; a_{r,1},
    a_{r,2}, \ldots, a_{r,s})^T \in K^n,
  \end{equation*}
  we obtain~$n = rs$ linear homogeneous equations for the
  unkowns~$a_{ij}$ by applying~$\beta$ to the monomials~$1, x, x^2/2!,
  \dots, x^{n-1}/(n-1)!$. The matrix of the corresponding system~$Ma =
  0$ is given by~$M = (M_{ns}(\bar{\phi}_1) \cdots
  M_{ns}(\bar{\phi}_r)) \in K^{n \times n}$ with blocks
  \begin{equation}
    \renewcommand{\arraystretch}{1.4}
    \label{eq:block-binom}
    M_{ns}(x) \equiv
    \begin{pmatrix}
      1 &&&&&\\
      x & 1\\
      \tfrac{x^2}{2} & x & 1\\
      \;\vdots && \ddots & \ddots\\
      \tfrac{x^{s-1}}{(s-1)!} &
      \tfrac{x^{s-2}}{(s-2)!} & \cdots & x & 1\\
      \tfrac{x^{s}}{s!} &
      \tfrac{x^{s-1}}{(s-1)!} & \cdots &
      \tfrac{x^2}{2} & x\\
      \;\vdots & \;\vdots & \ddots & \;\vdots & \;\vdots\\
      \tfrac{x^{n-1}}{(n-1)!} &
      \tfrac{x^{n-2}}{(n-2)!} & \cdots &
      \tfrac{x^{n-s+1}}{(n-s+1)!} &
      \tfrac{x^{n-s}}{(n-s)!}
    \end{pmatrix}
    \in K[x]^{n \times s} \; .
  \end{equation}
  Since~$\Phi$ is separative, the values~$\bar{\phi}_1, \dots,
  \bar{\phi}_r$ are mutually distinct. Then we may apply the
  subsequent Lemma~\ref{lem:det} to obtain~$\det{M} \ne 0$, hence~$Ma
  = 0$ has the unique solution~$a=0$.
\end{proof}

\begin{lemma}
  \label{lem:det}
  The determinant of~$M(x) = (M_{ns}(x_1), \dots, M_{ns}(x_r)) \in
  K[x_1, \dots, x_r]^{n \times n}$, with $n = rs$ and
  blocks~\eqref{eq:block-binom}, is given by
  \begin{equation}
    \label{eq:vandermonde-determinant}
    \det{M(x)} = V(r)^{s^2} \superfac{s-1}^r / \superfac{n-1},
  \end{equation}
  where~$V(r) = \displaystyle\prod_{1 \le i < j \le r} (x_j-x_i)$ is
  the~$r \times r$ Vandermonde determinant and~$\superfac{i} = 1! 2!
  \cdots i!$ denotes the superfactorial.
\end{lemma}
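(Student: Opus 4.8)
The plan is to recognize $M(x)$ as a \emph{confluent (osculating) Vandermonde matrix} written in the divided-power basis, and then to reduce it to the classical Vandermonde. Reading off~\eqref{eq:block-binom}, the $(p,q)$ entry of a block $M_{ns}(x_i)$ (with row index $p = 0, \dots, n-1$ and column index $q = 0, \dots, s-1$) is $x_i^{p-q}/(p-q)!$ for $p \ge q$ and zero otherwise. Setting $v(x) = (1, x, x^2/2!, \dots, x^{n-1}/(n-1)!)^{\top}$, this entry is exactly the $p$-th component of $v^{(q)}(x_i)$, since differentiating a divided power merely shifts its index. Hence the $r$ blocks of $M(x)$ stack the columns $v(x_1), v'(x_1), \dots, v^{(s-1)}(x_1), \dots, v(x_r), \dots, v^{(s-1)}(x_r)$, which is precisely the confluent Vandermonde matrix for the nodes $x_1, \dots, x_r$, each carried to derivative order $s-1$, expressed in the basis $v$.

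Next I would pass to the ordinary monomial basis. Writing $w(x) = (1, x, \dots, x^{n-1})^{\top}$ and $D = \operatorname{diag}(1/0!, 1/1!, \dots, 1/(n-1)!)$, we have $v = Dw$ and $v^{(q)} = Dw^{(q)}$, so $M(x) = D\,W(x)$, where $W(x)$ is the same confluent Vandermonde taken in the monomial basis. Since $\det D = 1/(0!\,1!\cdots(n-1)!) = 1/\superfac{n-1}$, everything reduces to the classical evaluation $\det W(x) = \superfac{s-1}^{\,r}\,V(r)^{s^2}$, after which~\eqref{eq:vandermonde-determinant} follows upon multiplying by $\det D$.

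To establish the classical formula I would prefer a \emph{coalescence (confluence) argument} over a direct cofactor induction, because it delivers the factor $V(r)^{s^2}$ and the constant $\superfac{s-1}^{\,r}$ simultaneously and transparently. Fix distinct scalars $\xi_1, \dots, \xi_s$ and, for small $\epsilon$, split each node into a cluster $y_{i,a} = x_i + \epsilon\xi_a$ with $a = 1, \dots, s$. Let $U(\epsilon)$ be the ordinary $n \times n$ Vandermonde in the $n$ distinct nodes $y_{i,a}$, whose determinant is the product of all node differences. Within a cluster the differences $\epsilon(\xi_b - \xi_a)$ contribute $\epsilon^{\binom{s}{2}} V_\xi$ per cluster, where $V_\xi = \prod_{a<b}(\xi_b-\xi_a)$; between clusters $i<j$ each of the $s^2$ differences tends to $x_j - x_i$, so $\det U(\epsilon) = \epsilon^{r\binom{s}{2}} V_\xi^{\,r}\bigl(V(r)^{s^2} + O(\epsilon)\bigr)$. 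On the other hand, expanding $w(x_i+\epsilon\xi_a) = \sum_{l\ge 0} (\epsilon\xi_a)^l/l!\; w^{(l)}(x_i)$ shows that the $s$ columns of cluster $i$ equal $\bigl(w(x_i), w'(x_i), \dots, w^{(s-1)}(x_i)\bigr)\,P^{\top}$ up to higher-derivative terms, where $P_{a,l} = (\epsilon\xi_a)^l/l!$. Thus right-multiplication by $\operatorname{diag}((P^{\top})^{-1}, \dots, (P^{\top})^{-1})$ turns $U(\epsilon)$ into $W(x) + O(\epsilon)$. Since $\det P = V_\xi\,\epsilon^{\binom{s}{2}}/\superfac{s-1}$, taking determinants and letting $\epsilon \to 0$ makes the powers of $\epsilon$ and the factors $V_\xi$ cancel, leaving exactly $\det W(x) = \superfac{s-1}^{\,r}\,V(r)^{s^2}$.

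The main obstacle is the bookkeeping in this last step: one must verify that right-multiplication by $(P^{\top})^{-1}$ genuinely carries each cluster block of $U(\epsilon)$ to the confluent block $\bigl(w^{(0)}(x_i), \dots, w^{(s-1)}(x_i)\bigr)$ in the limit. This rests on the order estimates that the entries of $P^{-1}$ grow like $\epsilon^{-(s-1)}$ while the neglected contributions from $w^{(l)}(x_i)$ with $l \ge s$ carry a factor $\epsilon^{l}$, so their product is $O(\epsilon)$ and drops out as $\epsilon \to 0$. Equivalently, one can sidestep the limit algebraically: treat $\det W(x)$ as a polynomial, read off from the scaling $W(\lambda x) = \operatorname{diag}(\lambda^p)\,W(x)\,\operatorname{diag}(\lambda^{-q})$ that it is homogeneous of degree $s^2\binom{r}{2}$, show that each $(x_j - x_i)^{s^2}$ divides it so that $\det W(x) = C\,V(r)^{s^2}$, and finally identify $C = \superfac{s-1}^{\,r}$ by comparing leading coefficients (the triangular diagonal of each cluster contributing $0!\,1!\cdots(s-1)! = \superfac{s-1}$). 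Either route closes the proof, but establishing the exact vanishing order $s^2$ along $x_i = x_j$ is where the real work sits.
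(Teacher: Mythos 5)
Your proposal is correct, and its first half coincides with what the paper actually does: both arguments pull the reciprocal factorials out of the rows of $M(x)$ (your diagonal matrix $D$ with $\det D = 1/\superfac{n-1}$) so as to reduce everything to the classical confluent Vandermonde determinant $\det W(x) = \superfac{s-1}^{\,r}\,V(r)^{s^2}$. The difference is what happens next: the paper simply cites this evaluation as a special case of Flowe--Harris or Krattenthaler, whereas you prove it from scratch by the coalescence argument (splitting each node $x_i$ into a cluster $x_i+\epsilon\xi_a$, comparing $\det U(\epsilon)$ with $\det P^r\det W(x)$, and cancelling the powers of $\epsilon$ and the factors $V_\xi$). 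Your bookkeeping is right: $\det P = \epsilon^{\binom{s}{2}}V_\xi/\superfac{s-1}$, the intra-cluster differences contribute $\epsilon^{r\binom{s}{2}}V_\xi^{\,r}$, and the neglected Taylor terms $w^{(l)}(x_i)$ with $l\ge s$ carry $\epsilon^l$ against entries of $P^{-1}$ of order $\epsilon^{-(s-1)}$, so they vanish in the limit. What your route buys is self-containedness and an explanation of \emph{why} the exponent is $s^2$ and the constant is $\superfac{s-1}^{\,r}$; what it costs is the analytic limit, which over a general field $K$ should be rephrased as extracting the lowest-order coefficient of a polynomial identity in $K(x_1,\dots,x_r)[\epsilon]$ (harmless here, since the identity has rational coefficients and the ambient setting forces characteristic zero anyway). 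Your fallback route via homogeneity and divisibility by $(x_j-x_i)^{s^2}$ is also viable, and you correctly identify the vanishing-order claim as the only place requiring real work there.
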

\begin{proof}
  The determinant is a special case of \cite[Thm.\
  1.1]{FloweHarris1993} or \cite[Thm.\ 20]{Krattenthaler1999}, apart
  from the linear factor $\frac{1}{(k-1)!}$ in $k$-th row of
  $M(x)$. Thus we have
  \begin{equation*}
    \det{M(x)} = V(r)^{s^2} \times \bigg( \prod_{i=1}^{r}
    \prod_{j=1}^{s-1} j! \bigg) \bigg( \prod_{k=0}^{n-1} \frac{1}{k!}
    \bigg)
    = V(r)^{s^2} \superfac{s-1}^r / \superfac{n-1},
  \end{equation*}
  which is~\eqref{eq:vandermonde-determinant}.
\end{proof}

Let us now turn to \emph{completeness}. In the analysis setting,
every~$\phi$ is complete since we know already the stronger result
that every Stieltjes condition is umbral
(Proposition~\ref{prop:cinf-weierstrass}). It is nevertheless
instructive to have a closer look at this case.

\begin{example}
  \label{ex:complete}
  Consider the point evaluation~$\phi = \evl_a$ in the \emph{standard
    setting} of Example~\ref{ex:standard}. Here we get an inner
  product
  \begin{equation*}
    \inner{f}{g}_a = \beta(g) = \cum_0^a f(\xi) g(\xi) \, d\xi,
  \end{equation*}
  which can be extended to all continuous functions~$f,g$ on~$[0,a]$
  and so gives rise to the pre-Hilbert Hausdorff space~$(C[0,a],
  \inner{\cdot}{\cdot}_\phi)$. It is well-known that~$(x^m)_{m \in
    \N}$ is a complete sequence in this
  space~\cite[V.24]{Bourbaki1987}. We may assume~$a=1$ by a scale
  transformation. By the usual Gram-Schmidt process, the
  monomials~$x^m$ can be transformed to an orthonormal basis~$(e_n)$
  of~$C[0,1]$, which consists in this case of the Legendre polynomials
  \begin{equation*}
    e_n = \frac{\sqrt{n+1/2}}{2^n n!} \, \frac{d^n}{dx^n} \, (x^2 -
    1)^n.
  \end{equation*}
  In every pre-Hilbert Hausdorff space~$E$, an orthonormal
  basis~$(e_m)$ has the following well-known
  property~\cite[Prop.~V.2.5]{Bourbaki1987}: If~$f \in E$ is nonzero,
  we have~$\inner{e_m}{f} \ne 0$ for some~$m \in \N$. Applying this
  to~$E = C[0,1] \supset C^\infty[0,1]$ and fixing any~$f \in
  C^\infty[0,1] \subset C^\infty(\R)$, we see that~$\phi$ is complete
  in the sense defined above. For if~$\inner{x^m}{f}_\phi = 0$ for
  all~$m \in \N$, then~$f$ vanishes on~$[0,1]$ and hence~$\phi \cum f$
  is degenerate.

  The above argument shows in fact that every global condition~$\phi
  \cum f$ over~$C^\infty(\R)$ is umbral or degenerate. Working
  over~$C^\omega(\R)$, we can exclude the degenerate case. Indeed, the
  identity theorem of complex analysis ensures that~$f=0$ whenever~$f$
  vanishes on~$[0,1]$. This is in stark contrast to the~$C^\infty$
  case, which has plenty of degenerate global conditions~$\phi \cum f$
  as we have observed at the beginning of Section~\ref{sec:umbral-bc}.
\end{example}

It becomes clear from the above example that completeness is really an
\emph{analytic property} of some sort. It is therefore not surprising
that one can construct ``purely algebraic'' examples lacking this
property.

\begin{example}
  \label{ex:incomplete}
  To give an example of an \emph{incomplete character} on~$\galg =
  \exppoly$ with~$K = \R$, we proceed similar to
  Example~\ref{ex:separative}. Define~$\phi(x^n) = 0$ for~$n>0$
  and~$\phi(e^x) = e$, $\phi(1) = 1$. Then clearly~$\beta = \phi \cum
  1$ is nondegenerate since~$\beta(e^x) = \phi \cum e^x = \phi(e^x-1)
  = e-1 \ne 0$. But we have~$\beta(x^m) = \phi \cum x^m =
  \phi(\smash{\tfrac{x^{m+1}}{m+1}}) = 0$ for all~$m \in \N$.
\end{example}

As noted above, separativity and completeness are most likely not
strong enough to ensure an umbral character set. Note that umbrality
of~$\Phi$ implies that two Stieltjes conditions in~$\intdiffop[\Phi]$
are \emph{linearly independent} on~$K[x]$ whenever they are linearly
independent on~$\galg$. If one of the two conditions is local and the
other global, it is reasonable to expect this property to follow from
completeness. This expections is fulfilled.

\begin{lemma}
  \label{lem:global-not-local}
  Let~$(\galg, \der, \cum)$ be an integro-differential algebra with a
  complete character~$\phi$. Then a nondegenerate global
  condition~$\phi \cum f$ never coincides on~$K[x]$ with any local
  condition based on~$\phi$.
\end{lemma}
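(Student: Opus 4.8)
The plan is to argue by contradiction: assuming that the nondegenerate global condition $\beta = \phi\cum f$ agrees on $K[x]$ with some local condition $\gamma = \sum_{i=0}^{N} c_i\,\phi\der^i$ based on $\phi$, I will deduce $f = 0$, which makes $\beta$ degenerate and so contradicts the hypothesis. The guiding idea is that a local condition of order $N$ can only ``see'' finitely many of the expansion coefficients attached to $\beta$ by Lemma~\ref{lem:umbral-global-cond}, so agreement on $K[x]$ ought to force all higher coefficients of $\beta$ to vanish; completeness of $\phi$ then finishes the job.

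First I would extract the vanishing of the high-order coefficients. Writing $\bar\phi = \phi(x)$ and combining the antiderivative Leibniz rule (Lemma~\ref{lem:int-part-pol}) with the multiplicativity of the character $\phi$, one obtains
\begin{equation*}
  \beta(x^m) = \phi\cum f x^m
  = \sum_{k=0}^{m} (-1)^k \fafac{m}{k}\, \bar\phi^{\,m-k}\, \phi(f^{(-k-1)}),
\end{equation*}
whereas evaluating the local condition gives $\gamma(x^m) = \sum_{k=0}^{N} c_k \fafac{m}{k}\,\bar\phi^{\,m-k}$. Equating these for every $m$ produces a linear system in the differences $e_k = (-1)^k\phi(f^{(-k-1)}) - c_k$ (with $c_k = 0$ for $k > N$) whose coefficient matrix $\big(\fafac{m}{k}\,\bar\phi^{\,m-k}\big)_{m,k}$ is lower triangular with diagonal entries $\fafac{m}{m} = m! \ne 0$. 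Solving it recursively forces every $e_k = 0$; in particular $\phi(f^{(-k-1)}) = 0$ for all $k > N$. (Equivalently, this step can be phrased through the associated shift-invariant operators of Proposition~\ref{prop:umbral-cond}: the operator of $\gamma$ is $\shift{\phi}$ composed with a polynomial in $\der$ of degree $\le N$, and cancelling the invertible shift $\shift{\phi}$ identifies the expansion $\sum_k (-1)^k\phi(f^{(-k-1)})\,\der^k$ of $\beta$ with that polynomial.)

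The delicate point---and the step I expect to be the main obstacle---is that these vanishing coefficients do \emph{not} directly say $f \perp_\phi K[x]$; they only make $f$ orthogonal to a subspace of codimension about $N+1$, which completeness cannot exploit. The trick is to pass to the iterated integral $h = \cum^{N+1} f = f^{(-(N+1))}$. Its own expansion coefficients are $\phi(h^{(-k-1)}) = \phi(f^{(-(N+k+2))})$, and since the index $N+k+2$ exceeds $N+1$ for every $k \ge 0$, all of them vanish by the previous step. Feeding this back into the same expansion formula yields $\phi\cum h\,x^m = 0$ for every $m$, that is $h \perp_\phi K[x]$.

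Finally I would invoke completeness of $\phi$ (in the sense $f \perp_\phi K[x] \Rightarrow f = 0$), applied to $h \in \galg$ rather than to $f$ itself: it forces $h = 0$, i.e.\ $\cum^{N+1} f = 0$. Applying $\der^{N+1}$ and using the section axiom $\der\cum = 1$ gives $f = 0$, so $\beta = \phi\cum f$ is degenerate, contradicting nondegeneracy. This last move also explains why completeness (rather than mere separativity) is indispensable: it is precisely the upgrade from orthogonality-on-$K[x]$ of the iterated integral to its vanishing in $\galg$ that prevents a genuinely global condition from masquerading as a local one.
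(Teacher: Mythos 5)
Your first two steps track the paper's argument exactly: from agreement with a local condition of order $N$ you correctly deduce $\phi(f^{(-k-1)})=0$ for $k>N$ (the paper says the umbral expansion ``breaks off''), and you correctly pass to the iterated antiderivative $h=f^{(-(N+1))}$, all of whose expansion coefficients vanish, so that $\phi\cum h$ annihilates $K[x]$. The gap is in the final step. You invoke completeness in the form ``$h\perp_\phi K[x]\Rightarrow h=0$''. Although the paper does offer this as a reformulation, it is strictly stronger than the operative definition, namely that every global condition $\phi\cum h$ vanishing on $K[x]$ is \emph{degenerate} (i.e.\ vanishes on all of $\galg$). The two readings coincide only when the bilinear form $\inner{f}{g}=\phi\cum fg$ is nondegenerate, and in the paper's flagship example this fails: for $\galg=C^\infty(\R)$ and $\phi=\evl_1$ a bump function $h$ supported off $[0,1]$ satisfies $\phi\cum hg=0$ for all $g$ without being zero. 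Indeed, Example~\ref{ex:complete} establishes completeness of the point evaluations only in the weak sense, and the statement before the lemma that ``in the analysis setting every $\phi$ is complete'' refers to that weak sense. Under the strong reading your lemma would not even apply to the main examples; under the correct weak reading your conclusion $h=0$ (and hence $f=\der^{N+1}h=0$) is unjustified --- you only get that $\phi\cum f^{(-(N+1))}$ is degenerate.

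That is precisely the point where the paper's proof has to do real work, and it is the piece missing from your argument: one must show that degeneracy of $\phi\cum f^{(-(N+1))}$ contradicts the assumed nondegeneracy of $\phi\cum f$. The paper proves by induction on $k$ that $\phi\cum f^{(-k)}$ is nondegenerate whenever $\phi\cum f$ is, using the differential Baxter axiom in the form $\cum f^{(-k)}g=f^{(-k-1)}g-\cum f^{(-k-1)}g'$ together with the multiplicativity of $\phi$ and the pure Baxter identity $(\cum h)^2=2\cum h\cum h$: given a witness $g$ with $\phi\cum f^{(-k)}g=1$, either $g'$ witnesses nondegeneracy of $\phi\cum f^{(-k-1)}$, or else $\phi(f^{(-k-1)})\neq 0$ and then $(\phi\cum f^{(-k-1)})(f^{(-k)})=\tfrac12\,\phi(f^{(-k-1)})^2\neq 0$. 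Your own parenthetical worry that the vanishing coefficients ``only make $f$ orthogonal to a subspace which completeness cannot exploit'' identifies the right difficulty; the passage to $h$ does not resolve it but merely relocates it, and the strong completeness you then apply is not available. To repair the proof you would need to add the induction argument (or an equivalent) establishing that nondegeneracy propagates to all the iterated antiderivatives.
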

\begin{proof}
  Assume~$\beta = \phi \cum f$ coincides on~$K[x]$ with $a_0 \phi +
  a_1 \phi \der + \cdots + a_s \phi \der^s$, where~$s$ is chosen
  minimal. Then the umbral expansion~\eqref{eq:umbral-exp} breaks off
  at~$k=s$, and we have~$\phi(f^{(-k-1)}) = 0$
  for~$k>s$. For~$\tilde{f} = f^{(-s-1)}$, we
  get~$\phi(\tilde{f}^{(-k-1)}) = 0$ for all~$k \in \N$, so the
  condition~$\phi \cum \tilde{f}$ is degenerate since~$\phi$ is
  complete. For showing that this cannot happen, it suffices to prove
  that all~$\phi \cum f^{(-k)}$ are nondegenerate whenever~$\phi \cum
  f$ is. We use induction on~$k \in \N$. The base case~$k=0$ is
  trivial, so assume the claim for fixed~$k$. By the induction
  hypothesis we can choose~$g \in \galg$ with~$\phi \cum f^{(-k)} g =
  1$. Using~\eqref{eq:old-diff-baxter-axiom} with~$g$ in place of~$f$
  and~$f^{(-k)}$ in place of~$g$, we have
  \begin{equation*}
    \cum f^{(-k)} g = f^{(-k-1)} g - \cum f^{(-k-1)} g',
  \end{equation*}
  so our choice of~$g$ entails~$1 = \phi(f^{(-k-1)} g) - \phi \cum
  f^{(-k-1)} g'$. If the first summand on the right-hand side is
  different from~$1$, then $g'$ witnesses to~$\phi \cum f^{(-k-1)}$
  being nondegenerate, and the induction is
  complete. Otherwise~$\phi(f^{(-k-1)})$ must be nonzero, and we may
  use the special case~$(\cum h)^2 = 2 \cum h \cum h$ of the pure
  Baxter axiom~\eqref{eq:pure-baxter-axiom} to derive
  \begin{equation*}
    (\phi \cum f^{(-k-1)})(f^{(-k)}) = \phi \cum f^{(-k)} \cum f^{(-k)}
    = \tfrac{1}{2} \, \phi (\cum f^{(-k)})^2 = \tfrac{1}{2} \,
    \phi(f^{(-k-1)})^2 \ne 0,
  \end{equation*}
  and again the induction is complete.
\end{proof}

\section{The Ring of Methorious Operators}
\label{sec:localization}
% ======================

Let us start by reviewing the general setting for localization in a
noncommutative unital ring~$R$, following~\cite[\S~4.10]{Lam1999}. As
in the commutative case, the denominator set~$S$ (the elements that
should become invertible) must clearly form a \emph{multiplicative
  set}~$S$, meaning a submonoid of~$(R\nnz, \cdot)$. Clearly we must
stipulate $0 \notin S$, otherwise the localization is the zero
ring. If~$R$ is a domain, one might want to take~$S = R\nnz$. While
this is always possible in the commutative setting, one needs an
additional condition if~$R$ is not commutative.

Indeed, let us strive for a localization~$\loc{S}{R}$ on the left,
meaning all elements have the form~$s^{-1}r$ with~$s \in S$ and~$r \in
R$. Since~$s^{-1}, r \in \loc{S}{R}$ this must also be possible for
the reverse product so that~$rs^{-1} = \tilde{s}^{-1} \tilde{r}$ for
some~$\tilde{s} \in S$ and~$\tilde{r} \in R$. Multiplying out, we get
the necessary condition
\begin{equation}
  \label{eq:ore-cond}
  Sr \cap Rs \ne \emptyset \qquad \text{for all $r \in R$ and $s \in
    S$},
\end{equation}
known as the \emph{left Ore condition}; the set~$S$ is then called
\emph{left permutable}.

If~$R$ is a domain, this condition is actually sufficient for
guaranteeing the existence of a unique localization~$\loc{S}{R}
\supseteq R$, which can be constructed essentially as in the
commutative case. However, if~$R$ has \emph{zero divisors}, in general
one does not get an embedding~$R \subseteq \loc{S}{R}$. In this case,
the \emph{extension}~$\epsilon\colon R \rightarrow \loc{S}{R}$ is a
ring homomorphism that is not injective (and of course not
surjective). Its kernel contains at least those~$r \in R$ that
yield~$sr = 0$ for some~$s \in S$ since this implies~$\epsilon(s) \,
\epsilon(r) = 0$ and hence~$\epsilon(r) = 0$, due to~$\epsilon(s)$
being invertible in~$\loc{S}{R}$. In the classical
localization~$\loc{S}{R}$, the kernel should be optimal in the sense
that it contains no other elements than these necessary ones.

\begin{definition}
  \label{def:localization}
  Let~$R$ be an arbitrary ring with~$S \subseteq
  R$. Then~$\epsilon\colon R \rightarrow \loc{S}{R}$ is called a left
  \emph{ring of fractions} if
  \begin{enumerate}
    \setlength{\itemsep}{0pt}
  \item[(a)] all elements~$\epsilon(s)$ with~$s \in S$ are invertible
    in~$\loc{S}{R}$,
  \item[(b)] every element of~$\loc{S}{R}$ has the
    form~$\epsilon(s)^{-1} \epsilon(r)$ for some~$s \in S$, $r \in
    R$,
  \item[(c)] and the kernel of~$\epsilon$ is given by $\{ r \in R \mid
    sr = 0 \; \text{for some $s \in S$} \}$.
  \end{enumerate}
  The ring homomorphism~$\epsilon$ is called the \emph{extension}.
\end{definition}

The missing condition is now easy to establish. Assume~$s \in S$ is a
right zero divisor so that~$rs=0$ for some~$r \in R$. Then
also~$\epsilon(r) \, \epsilon(s) = 0$ and hence~$\epsilon(r) = 0$
since~$\epsilon(s)$ is invertible in~$\loc{S}{R}$. But this
implies~$\tilde{s}r = 0$ for some~$\tilde{s} \in S$ by item~(c) of
Definition~\ref{def:localization}. Accordingly, one calls a set~$S$
with the property
\begin{equation}
  \label{eq:reversible}
  (\forall r \in R) \; (0 \in rS \;\Rightarrow\; 0 \in Sr)
\end{equation}
\emph{left reversible}. Together with the left Ore
condition~\eqref{eq:ore-cond}, this turns out to be sufficient for the
existence of a left ring of fractions.

\begin{theorem}
  \label{thm:ore-localization}
  Let~$R$ be an arbitrary ring. Then for any~$S \subseteq R$, the left
  ring of fractions~$\loc{S}{R}$ exists iff~$S$ is multiplicative,
  left permutable and left reversible.
\end{theorem}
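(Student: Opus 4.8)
The plan is to prove the two implications separately, with the bulk of the work in the ``if'' direction. The necessity of the three conditions is essentially recorded in the discussion preceding the theorem, so I would only need to organise it: multiplicativity is the standing requirement fixing what a denominator set is (a submonoid of~$R\nnz$); left permutability~\eqref{eq:ore-cond} is exactly the rewriting of~$rs^{-1}$ as~$\tilde{s}^{-1}\tilde{r}$ carried out there; and left reversibility~\eqref{eq:reversible} is precisely the zero-divisor argument turning~$rs = 0$ into~$\tilde{s}r = 0$. Each of these uses only the defining properties (a)--(c) of Definition~\ref{def:localization} together with the invertibility of the~$\epsilon(s)$.

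For the substantive ``if'' direction I would carry out the classical Ore construction explicitly. On the set~$S \times R$ of pairs~$(s,r)$---each read as the formal fraction~$s^{-1}r$---I introduce the relation~$(s_1, r_1) \sim (s_2, r_2)$ iff there are~$c_1, c_2 \in R$ with~$c_1 s_1 = c_2 s_2 \in S$ and~$c_1 r_1 = c_2 r_2$. Reflexivity and symmetry are immediate; for transitivity I would, given the two witnessing pairs, apply~\eqref{eq:ore-cond} to force the two middle denominators to a common left multiple. This produces an element that annihilates the shared denominator on the right, and here left reversibility~\eqref{eq:reversible} is indispensable: it lets me clear that annihilator on the left and assemble the required witnesses. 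Setting~$\loc{S}{R} = (S \times R)/\!\sim$ and~$\epsilon(r) = [(1,r)]$, I would then define addition by choosing a common left denominator~$a s_1 = b s_2 \in S$ via~\eqref{eq:ore-cond} and putting~$[(s_1,r_1)] + [(s_2,r_2)] = [(a s_1, a r_1 + b r_2)]$, and multiplication by rewriting~$r_1 s_2^{-1}$ as~$\tilde{s}^{-1}\tilde{r}$ (with~$\tilde{s} r_1 = \tilde{r} s_2$, $\tilde{s} \in S$, again from~\eqref{eq:ore-cond}) and putting~$[(s_1,r_1)] \cdot [(s_2,r_2)] = [(\tilde{s} s_1, \tilde{r} r_2)]$, where~$\tilde{s} s_1 \in S$ by multiplicativity.

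The hard part will be the battery of well-definedness checks. Both operations depend on the choices made when invoking the Ore condition and on the chosen class representatives, so I would have to show the outcomes coincide up to~$\sim$ in every case; as in the transitivity argument, competing choices are reconciled by~\eqref{eq:ore-cond} while the spurious right-annihilators of denominators that crop up are discarded by~\eqref{eq:reversible}. Granting these, the ring axioms---most delicately associativity of multiplication and the two distributive laws---follow by further routine applications of the same two conditions, and~$\epsilon$ is then visibly a ring homomorphism.

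It remains to verify the three defining properties. Property~(b) is built into the construction, since~$[(s,r)] = \epsilon(s)^{-1}\epsilon(r)$; property~(a) holds because~$\epsilon(s) = [(1,s)]$ has the two-sided inverse~$[(s,1)]$; and property~(c) follows by unfolding~$[(1,r)] = [(1,0)]$, which by definition of~$\sim$ says exactly that~$sr = 0$ for some~$s \in S$. For the many mechanical verifications I would follow, and ultimately cite, the detailed treatment in Lam~\cite[\S4.10]{Lam1999} rather than reproduce every computation.
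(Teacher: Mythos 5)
Your proposal is correct: it is the classical Ore construction (equivalence relation on formal fractions, common left denominators for addition, the Ore rewrite for multiplication, and the reversibility condition to rescue transitivity and well-definedness), which is exactly the argument of \cite[Thm.~10.6]{Lam1999}. The paper gives no proof of its own but simply cites that theorem, so your sketch is in substance the same proof, merely written out.
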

\begin{proof}
  See~\cite[Thm.~10.6]{Lam1999}.
\end{proof}

The setting would become much nicer when the extension~$\epsilon\colon
R \rightarrow \loc{S}{R}$ is injective so that we can regard~$R
\subseteq \loc{S}{R}$ as an \emph{embedding}. Unfortunately, the
localization of~$\intdiffop[\Phi]$ that we will work out in the sequel
is not of this type. In fact, one can easily show that the injectivity
of~$\epsilon$ is equivalent to having only regular elements
in~$S$. Following~\cite[\S5.1]{Cohn2000}, we call an element $s \in S$
regular if it is both left and right regular, where left regular means
$rs=0$ implies $r=0$ for all~$r \in R$ while right regular means
$sr=0$ implies $r=0$ for all~$r \in R$. As~$\intdiffop[\Phi]$ contains
plenty of zero divisors, it is hard to achieve a regular denominator
set~$S$.

Of course there are analogous definitions for the \emph{right ring of
  fractions} $\rloc{S}{R}$, but in general the existence
of~$\loc{S}{R}$ does not imply the existence of~$\rloc{S}{R}$ or vice
versa, and even when both exist they need not be isomorphic. In fact,
we shall be dealing with a case that is left permutable
(Lemma~\ref{lem:prob-left-permutable}) but not right permutable
(Proposition~\ref{prop:not-right-permutable}).

The localization that we shall construct is based on the
monoid~$\regprob$ of regular boundary problems over an
integro-differential algebra~$(\galg, \der, \cum)$ with umbral
character set~$\Phi$. The ring~$R$ to be localized is the \emph{ring
  of boundary problems}~$K \regprob$, which is a monoid algebra. For
such cases, the above setting can be somewhat simplified.

A monoid~$S$ is called \emph{left permutable} if it satisfies the Ore
condition~\eqref{eq:ore-cond} with respect to itself, meaning~$Ss \cap
S\tilde{s} \ne \emptyset$ for all~$s, \tilde{s} \in S$. In the absence
of addition, the analog of condition~\eqref{eq:reversible} says that
for all~$s, s_1, s_2 \in S$ with $s_1 s = s_2 s$ there must
exist~$\tilde{s} \in S$ with~$\tilde{s} s_1 = \tilde{s} s_2$; in this
case we call~$S$ \emph{left reversible} (as a monoid). We call~$S$ a
\emph{left Ore monoid} if~$S$ is both left permutable and left
reversible; cf. also~\cite[Def.~1.3]{Picavet2003}.

The following lemma is a special case of~\cite[Lem.~6.6]{Skoda2006}
but we include its proof since it dispenses with the technical
machinery used there. It tells us that we can \emph{transfer left
  permutability} from~$S$ to the monoid algebra $KS$.

\begin{lemma}
  \label{lem:per-from-mon}
  If $S$ is a left permutable monoid, then~$S$ is a left permutable
  subset of~$KS$. Similarly, if $S$ is a left reversible monoid,
  then~$S$ is a left reversible subset of~$KS$.
\end{lemma}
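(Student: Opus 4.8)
The plan is to treat the two assertions separately, and in each case to reduce the ring-theoretic condition on $KS$ to finitely many instances of the corresponding monoid condition on $S$, then merge those instances into a single witness by induction on the number of terms involved. Throughout I write a generic element of the monoid algebra as a finite sum $r = \sum_i c_i t_i$ with coefficients $c_i \in K$ and distinct monoid elements $t_i \in S$.

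For \emph{left permutability} I must show $Sr \cap (KS)\,s \neq \emptyset$ for every $r \in KS$ and $s \in S$. The idea is to find a single $u \in S$ that simultaneously pushes $s$ to the left past every term of $r$, i.e. with $u t_i \in Ss$ for all $i$; granting this, writing $u t_i = w_i s$ with $w_i \in S$ gives $ur = \sum_i c_i w_i\, s = ws$ for $w = \sum_i c_i w_i \in KS$, so $ur \in Sr \cap (KS)s$. I build $u$ by induction on the number of terms: having found $u$ with $u t_1, \dots, u t_k \in Ss$, I apply the monoid Ore condition to the pair $u t_{k+1},\, s$ to obtain $a \in S$ with $a\,(u t_{k+1}) \in Ss$; then $au$ serves for all of $t_1, \dots, t_{k+1}$, because left-multiplying each relation $u t_i = w_i s$ by $a$ keeps it in $Ss$.

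For \emph{left reversibility} I must show that $rs = 0$ in $KS$, for some $s \in S$, forces $s'r = 0$ for some $s' \in S$. The key observation is that $rs = 0$ says precisely that the coefficient sum of $r$ vanishes over each fibre of right multiplication by $s$, i.e. $\sum_{t_i s = m} c_i = 0$ for every $m \in S$. Consequently, any $s' \in S$ that is \emph{constant on these fibres} --- meaning $s' t = s' \tilde{t}$ whenever $t s = \tilde{t} s$ for support elements $t, \tilde{t}$ --- already yields $s'r = \sum_m (\sum_{t_i s = m} c_i)\,(s' t_i) = 0$, irrespective of any collisions $s'$ may create between different fibres. It therefore suffices to produce one $s'$ that equalizes $s' p_j$ and $s' q_j$ for the finitely many colliding pairs $(p_j, q_j)$ of support elements, those with $p_j s = q_j s$.

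The main obstacle --- and the only genuinely delicate point --- is merging the pairwise witnesses into a single $s'$ using the reversibility hypothesis \emph{alone}, since for this part we may not assume $S$ is permutable. Here I again induct on the pairs: if $s'$ already equalizes the first $k$ pairs, then for a further pair $p_{k+1} s = q_{k+1} s$ I left-multiply by $s'$ to obtain $(s' p_{k+1})\,s = (s' q_{k+1})\,s$, whereupon the monoid reversibility condition supplies $c \in S$ with $c\,(s' p_{k+1}) = c\,(s' q_{k+1})$. The replacement $cs'$ equalizes the $(k+1)$-st pair by construction and still equalizes the earlier ones, since passing from $s'$ to any left multiple preserves the relations $s' p_j = s' q_j$ already secured; the base case is trivial (take $s' = 1$), which completes the induction and hence the lemma.
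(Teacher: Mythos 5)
Your proposal is correct and follows essentially the same route as the paper: for permutability you merge the $n$ pairwise Ore witnesses into a single left multiplier by iterated left multiplication, which is exactly the paper's chain $\tilde{l}_1\cdots\tilde{l}_n$ written as an induction. For reversibility the paper only remarks that the claim ``follows by induction on the number of terms''; your fibre decomposition of $rs=0$ and the inductive equalization of colliding pairs is a correct and complete realization of that sketch.
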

\begin{proof}
  Given $s_i \in S$, $\lambda_i \in K$ and $s \in S$, we have to find
  $\tilde{s}_i \in S$, $\tilde{\lambda}_i \in K$ and $\tilde{s} \in S$
  such that
  \begin{equation}
    \label{eq:ore-cond-mon}
    \tilde{s} (\lambda_1 s_1 + \dots + \lambda_n s_n) = (\tilde{\lambda}_1
    \tilde{s}_1 + \dots + \tilde{\lambda}_n \tilde{s}_n) s.
  \end{equation}
  Using the Ore condition in $S$, we can successively find $\tilde{l}_n,
  \dots, \tilde{l}_1 \in S$ and $\tilde{r}_1, \dots, \tilde{r}_n \in S$ such
  that
  \begin{align*}
    \tilde{l}_n s_1 &= \tilde{r}_1 s\\
    \tilde{l}_{n-1} (\tilde{l}_n s_2) &= \tilde{r}_2 s\\
    \dots &= \dots\\
    \tilde{l}_1 (\tilde{l}_2 \cdots \tilde{l}_n s_n) &=
    \tilde{r}_n s
  \end{align*}
  is fulfilled. Multiplying these equations by $\lambda_1 \,
  \tilde{l}_1 \cdots \tilde{l}_{n-1}$, $\lambda_2 \, \tilde{l}_1
  \cdots \tilde{l}_{n-2}$, $\dots$, $\lambda_n$ on the left yields the
  system
  \begin{align*}
    \tilde{s} (\lambda_1 s_1) &= (\tilde{\lambda}_1 \tilde{s}_1) s\\
    \tilde{s} (\lambda_2 s_2) &= (\tilde{\lambda}_2 \tilde{s}_2) s\\
    \dots &= \dots\\
    \tilde{s} (\lambda_n s_n) &= (\tilde{\lambda}_n \tilde{s}_n) s
  \end{align*}
  if we set $\tilde{s} = \tilde{l}_1 \cdots \tilde{l}_n$ and
  $\tilde{s}_i = \tilde{l}_1 \cdots \tilde{l}_{n-i} \tilde{r}_i$ with
  coefficients $\tilde{\lambda}_i = \lambda_i$. Summing these
  equations gives the desired Ore condition~\eqref{eq:ore-cond-mon}.
  The proof of the second statement follows immediately by induction
  on the number of terms.
\end{proof}

\begin{corollary}
  \label{cor:ore-from-mon}
  If $S$ is a left Ore monoid, the left ring of
  fractions~$\loc{S}{\,(KS)}$ exists.
\end{corollary}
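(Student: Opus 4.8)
The plan is to obtain the corollary as an immediate application of Theorem~\ref{thm:ore-localization} to the ring $R = KS$, with $S$ regarded as a subset of $R$ via the canonical embedding of the monoid into its monoid algebra. By that theorem it suffices to verify three properties of $S \subseteq KS$: that it is multiplicative, left permutable, and left reversible. Two of these have essentially been settled already in Lemma~\ref{lem:per-from-mon}, so the remaining work is just to assemble the pieces.

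First I would dispose of \emph{multiplicativity}. Since $S$ is a monoid, it is closed under the multiplication inherited from $KS$ and contains the identity $1 \in KS$. Moreover, the elements of $S$ form part of the distinguished $K$-basis of the monoid algebra $KS$, so none of them equals the zero of $KS$; hence $S$ is a submonoid of $(KS\nnz, \cdot)$, which is exactly what multiplicativity demands.

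For \emph{left permutability} and \emph{left reversibility} of $S$ as a subset of $KS$, I would invoke the two statements of Lemma~\ref{lem:per-from-mon}. The hypothesis that $S$ is a left Ore monoid means precisely that $S$ is both left permutable and left reversible as a monoid, and these are exactly the hypotheses of the two halves of that lemma. The lemma then transfers both properties from the monoid $S$ to the subset $S \subseteq KS$.

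With all three conditions verified, Theorem~\ref{thm:ore-localization} yields the existence of the left ring of fractions~$\loc{S}{(KS)}$, completing the proof. The genuine content has already been carried by Lemma~\ref{lem:per-from-mon} (the passage from the monoid-level Ore and reversibility conditions to their algebra-level counterparts), so no further obstacle arises here; the only subtlety worth spelling out is the quick observation that $S$ sits inside $KS$ as nonzero basis elements, which guarantees $0 \notin S$ and hence multiplicativity.
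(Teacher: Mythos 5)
Your proof is correct and follows exactly the route the paper takes: the paper's own proof reads simply ``Immediate from Lemma~\ref{lem:per-from-mon} and Theorem~\ref{thm:ore-localization}'', and you have merely spelled out the (trivial but worth noting) multiplicativity check that the paper leaves implicit.
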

\begin{proof}
  Immediate from Lemma~\ref{lem:per-from-mon} and
  Theorem~\ref{thm:ore-localization}.
\end{proof}

We shall now prove that~$\regprob$ is a left Ore monoid. The
multiplication of boundary problems is realized by the semi-direct
product~\eqref{eq:bp-product} of the multiplicative
monoid $\calg[\der]$ acting on the additive monoid of boundary
spaces. Projecting onto the first factor, it is then clear that the
monic differential operators of~$\calg[\der]$ must form a left Ore
monoid. A coefficient algebra with this property shall be called
\emph{left extensible}.

\begin{example}
  \label{ex:gcd-diffop}
  Left permutability requires \emph{some} common left multiple for
  given differential operators~$T_1, T_2 \in \calg[\der]$. Hence it
  may seem tempting to restrict ourselves to those coefficient
  algebras~$(\calg, \der)$ that even have a \emph{least common left
    multiple}~$\lclm(T_1, T_2)$. But this setting is not suitable for
  our purposes: As far as we know, there are only two natural
  examples: The rational functions~$\calg = K(x)$ allow an adaption of
  the Euclidean algorithm~\cite[p.~23]{Schwarz2008}, and of course one
  may always take~$\galg = K$. The first case yields a differential
  field, which excludes the existence of an integro-differential
  structure~\cite[p.~518]{RosenkranzRegensburger2008a}. The second
  case restricts us to differential operators with constant
  coefficients, which is excessively restrive.
\end{example}

Fortunately it turns out that we do not need least common left
multiples since left extensible coefficient algebras are easy to come
by, as the next lemma shows. In particular, any integro-differential
algebra~$\galg$ allows~$\calg = K[x]$ so that~$\calg[\der] = A_1(K)$
is the \emph{Weyl algebra}. As another example, consider the standard
setting of Example~\ref{ex:standard}, where for~$\calg$ one may take
the larger ring of \emph{analytic functions}~$C^\omega(\R)$.

\begin{proposition}
  \label{prop:left-ext}
  Any left Noetherian differential domain~$(\calg, \der)$ is left
  extensible.
\end{proposition}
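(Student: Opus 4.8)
The plan is to verify that the set $S$ of monic operators in $R := \calg[\der]$ is a left Ore monoid, i.e.\ both left permutable and left reversible; by the definition preceding the statement, this is exactly what left extensibility means. That $S$ is a monoid is immediate, since the product of two monic operators is again monic (the leading term of the product is $\der^{a+t}$ with coefficient $1$). Unwinding the monoid Ore condition $ST_1 \cap ST_2 \neq \emptyset$, left permutability amounts to producing, for any two monic $T_1, T_2$, a common left multiple $AT_1 = BT_2$ with both cofactors $A$ and $B$ monic; note that such a common multiple is then automatically monic.

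For permutability I would work in the left $\calg$-module $M := R / R T_2$, where $T_2$ has order $n$ and $RT_2$ denotes the left ideal of right multiples of $T_2$. Right division by the monic operator $T_2$ (which needs no inversion of coefficients) shows that every $F \in R$ is uniquely $QT_2 + \rho$ with $\ord \rho < n$, so $M$ is free of rank $n$ over $\calg$ with basis $\overline{1}, \overline{\der}, \dots, \overline{\der^{n-1}}$; since $\calg$ is left Noetherian, $M$ is a Noetherian $\calg$-module. I would then consider the ascending chain of $\calg$-submodules $N_j := \sum_{i=0}^{j} \calg\, \overline{\der^i T_1} \subseteq M$. By the Noetherian property this chain stabilizes, so for some $j$ the element $\overline{\der^{j+1} T_1}$ already lies in $N_j$; writing out this membership yields $c_0, \dots, c_j \in \calg$ with $\bigl(\der^{j+1} - \sum_{i=0}^{j} c_i \der^i\bigr) T_1 \in RT_2$. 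The operator $A := \der^{j+1} - \sum_{i=0}^{j} c_i \der^i$ is monic by construction, and $AT_1 = BT_2$ for some $B \in R$; since $AT_1$ and $T_2$ are monic, comparison of leading coefficients forces $B$ monic as well, which is the desired common multiple.

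Left reversibility is then cheap. Since $\calg$ is a domain, $R = \calg[\der]$ is a domain as well: its order filtration has associated graded the commutative polynomial ring $\calg[\xi]$, which is a domain, whence so is $R$. Thus every monic $s$ is a regular element, so $s_1 s = s_2 s$ implies $s_1 = s_2$, and reversibility holds trivially with $\tilde{s} = 1$. Together with permutability this shows that $S$ is a left Ore monoid, so $\calg$ is left extensible.

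I expect the main obstacle to be not the mere \emph{existence} of a common left multiple but the requirement that the cofactors be \emph{monic}: a direct appeal to the classical fact that a left Noetherian domain is left Ore only yields nonzero cofactors, whose leading coefficients need not be invertible in $\calg$. The module-theoretic chain argument is designed precisely to sidestep this, producing $A$ monic by fiat (its top term being $\der^{j+1}$) and then pinning down $B$ through leading coefficients. The Noetherian hypothesis enters exactly to force the chain $N_j$ to stabilize, while the domain hypothesis serves only to make reversibility transparent.
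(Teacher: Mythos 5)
Your proof is correct, and it reaches the conclusion by a genuinely more self-contained route than the paper. The paper's own proof isolates the same crux that you do---left permutability of the \emph{monic} operators, as opposed to mere left permutability of $\calg[\der]\setminus\{0\}$, since the classical Ore property of a Noetherian domain gives cofactors whose leading coefficients need not be invertible---but then disposes of it by citation, invoking Cohn's result (Lem.~1.5.1 of \emph{Free ideal rings and localization in general rings}, \cite{Cohn2006}) that the monic polynomials in a skew polynomial ring over a left Noetherian domain are left permutable. You instead prove this directly: right division by the monic $T_2$ exhibits $\calg[\der]/\calg[\der]T_2$ as a free left $\calg$-module of finite rank, hence Noetherian, and the stabilization of the ascending chain generated by the images of $\der^i T_1$ produces a monic left cofactor $A$ with $AT_1\in\calg[\der]T_2$; monicity of the other cofactor $B$ then drops out of a comparison of leading coefficients. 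This is in substance the standard proof of the lemma the paper cites, so the two arguments agree mathematically; yours buys self-containedness at the cost of length, the paper's buys brevity at the cost of an external reference. The reversibility step---cancellation of a nonzero right factor in the domain $\calg[\der]$---is identical in both.
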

\begin{proof}
  If~$\calg$ is a left Noetherian domain, it satisfies left
  permutability by~\cite[Thm.~5.4]{Cohn2000} and is therefore a left
  Ore domain. But~$\calg[\der]$ is a special case of a skew-polynomial
  algebra and therefore inherits the property of being a left Ore
  domain by~\cite[Prop.~5.9]{Cohn2000}. But this is not enough since
  we need the \emph{monic} differential operators of~$\calg[\der]$ to
  form a left Ore monoid.

  Hence assume~$T_1 T = T_2 T$ for some monic differential
  operators~$T_1, T_2, T \in \calg[\der]$. Since~$\calg[\der]$ is a
  domain, $(T_1 - T_2) T = 0$ implies~$T_1 = T_2$ because~$T = 0$ is
  not possible. Hence the monic differential operators
  from~$\calg[\der]$ form a left reversible monoid. But this monoid is
  indeed a left Ore monoid: According to~\cite[Lem.~1.5.1]{Cohn2006},
  the set of all monic polynomials in the skew polynomial
  ring~$\calg[\der]$ is left permutable whenever~$\calg$ is a left
  Noetherian domain.
\end{proof}

Let us remark that the generalization from \emph{least} common left
multiples to common left multiples is crucial even for the basic case
$\calg = K[x]$. For example when~$T_1 = \der + x$ and~$T_2 =
\der^2+ x \der + x +1$, the least common left multiple of~$T_1$
and~$T_2$ in~$K(x)[\der]$ is
\begin{displaymath}
 \der^3+ \tfrac{2 x^2-1}{x} \, \der^2 + (x^2+1+x) \, \der+
 \tfrac{x^2-1+x^3}{x},
\end{displaymath}
and so gives~$T = x \, \der^3+(2 x^2-1) \, \der^2+(x^3+x^2+x) \, \der
+ (x^3+x^2-1)$ when taken in~$\calg[\der]$. This shows that the least
common left multiple of two monic operators in~$\calg[\der]$ need not
be monic. The conventional computation tools for the Weyl algebra are
therefore not directly applicable for computing monic common left
multiples in~$\calg[\der]$, but some recent
methods~\cite{BostanChyzakLiSalvy2011,Tsai2000} can be adapted to this
purpose [A.~Bostan, private communication].

As explained at the beginning of Section~\ref{sec:umbral-bc}, the main
tool for ensuring the left Ore condition in~$\regprob$ is the
embedding of singular boundary problems into regular ones. This will
be achieved in the Regularization Lemma~\ref{lem:regularization}, by
successively embedding the Stieltjes conditions generating a given
boundary space. Hence the crucial step is to embed a \emph{single
  Stieltjes condition}, which we require to be umbral so that we can
build up a polynomial fundamental system.

\begin{lemma}
  \label{lem:one-cond-high}
  Let~$\beta \in \intdiffop[\Phi]$ be an umbral Stieltjes condition
  over a given integro-differential algebra~$\galg$. Then there is
  a~$k \in \N$ such that~$(\der^{k+1}, [\evl, \dots, \evl \der^{k-1},
  \beta])$ is a regular boundary problem.
\end{lemma}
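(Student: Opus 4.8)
The plan is to verify regularity directly through the evaluation-matrix criterion of Lemma~\ref{lem:regularity-test}, choosing $k$ so that the single umbral condition $\beta$ ``sees'' the top monomial of the fundamental system. Since $\beta$ is umbral, Definition~\ref{def:umbral-characters} supplies a monomial $x^m$ with $\beta(x^m) \neq 0$, and I would simply take $k = m$. The differential operator $\der^{k+1}$ has order $k+1$, and by Proposition~\ref{prop:pol-ring} its kernel is $\Ker{\der^{k+1}} = [1, x, \dots, x^k]$, for which I would use the scaled fundamental system $u_j = x^{j-1}/(j-1)!$ with $j = 1, \dots, k+1$. The boundary space is generated by the $k+1$ functionals $\evl, \evl\der, \dots, \evl\der^{k-1}, \beta$, taken in this order so that $\beta$ sits in the last row.

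The heart of the argument is computing the $(k+1) \times (k+1)$ evaluation matrix $[\beta_i(u_j)]$ and reading off its block-triangular shape. For the first $k$ rows, where $\beta_i = \evl\der^{i-1}$, I would use $\der^{i-1}(x^{j-1}/(j-1)!) = x^{j-i}/(j-i)!$ together with $\evl(x^n/n!) = \delta_{n,0}$ (recall $\evl(x) = 0$ since $x = \cum 1$ and $\der\cum = 1$). This collapses the top $k \times (k+1)$ block to $[\,I_k \mid 0\,]$: the identity on the first $k$ columns and zeros in the last column. The bottom row is $(\beta(u_1), \dots, \beta(u_{k+1}))$, whose final entry is $\beta(u_{k+1}) = \beta(x^k/k!) = \beta(x^k)/k!$.

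With this shape the determinant is immediate: expanding along the last column (which is $(0, \dots, 0, \beta(x^k)/k!)^\top$) gives $\det = \beta(x^k)/k!$, nonzero precisely by the choice $k = m$ guaranteed by umbrality. Nonvanishing of this determinant does double duty: it shows the $k+1$ generating functionals are linearly independent on $\Ker{\der^{k+1}}$, hence linearly independent in $\galg^*$, so that $\dim\bspc = k+1 = \ord{\der^{k+1}}$ and they form a genuine basis; and it shows the evaluation matrix is regular. Lemma~\ref{lem:regularity-test} then delivers regularity of $(\der^{k+1}, [\evl, \dots, \evl\der^{k-1}, \beta])$.

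The only real obstacle is bookkeeping: making sure the indices line up so that $\beta$ occupies the last row with its decisive entry $\beta(x^k)/k!$ in the corner, and that the initial-value rows genuinely vanish above the diagonal. The monomial scaling $u_j = x^{j-1}/(j-1)!$ is a convenience that produces exactly the identity block; even an unscaled choice would leave a diagonal block with nonzero entries $(i-1)!$, which would still suffice. The conceptual content is carried entirely by umbrality, engineered precisely so that the corner entry can be made nonzero, while the rest is the standard initial-value part whose evaluation matrix is trivialized by Proposition~\ref{prop:pol-ring}.
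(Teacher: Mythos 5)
Your proposal is correct and follows essentially the same route as the paper: the polynomial fundamental system $1, x/1!, \dots, x^k/k!$ from Proposition~\ref{prop:pol-ring}, the evaluation matrix with an identity block from the initial conditions and $\beta(x^k/k!)$ in the corner, and Lemma~\ref{lem:regularity-test}. The only (immaterial) difference is that the paper takes $k$ \emph{minimal} with $\beta(x^k)\neq 0$, which makes the whole matrix diagonal, whereas you take an arbitrary such $m$ and expand along the last column; your added remark that the nonvanishing determinant also certifies $\dim\bspc = k+1$ is a point the paper leaves implicit.
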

\begin{proof}
  Since~$\beta$ is umbral, there is a minimal mononomial~$x^k$
  with~$\beta(x^k) \ne 0$. Clearly, $u = (1, x/1!, \cdots, x^k/k!)$ is
  a fundamental system for~$\der^{k+1}$. Using the boundary
  conditions~$\gamma = (\evl, \dots, \evl \der^{k-1}, \beta)$, we
  obtain the $(n+1) \times (n+1)$ evaluation matrix
  \begin{equation*}
    \gamma(u) = 
    \begin{pmatrix}
      1 & 0 & \cdots & 0 & 0\\
      0 & 1 & \ddots & \vdots & 0\\
      \vdots & \ddots & \ddots & 0 & \vdots\\
      0 & \cdots & 0 & 1 & 0\\
      0 & \cdots & 0 & 0 & \beta(x^k/k!)
    \end{pmatrix},
  \end{equation*}
  where the off-diagonal bottom entries vanish by the assumption
  on~$x^k$.
\end{proof}

Note that~$k$ in Lemma~\ref{lem:one-cond-high} can also be
\emph{zero}, for example if~$\beta = \evl$. In that case, it is not
necessary to add any initial conditions since the first-order
problem~$(\der, [\evl])$ is already regular.

\begin{example}
  \label{ex:embed-single-cond}
  As a more or less typical case consider~$\beta =
  \evl_1-\evl_0$. Here the minimal mononomial is~$x$, which leads back
  to our standard Example~\ref{ex:second-order}, the regular boundary
  problem~$(\der^2, [\evl_0, \evl_1-\evl_0]) = (\der^2, [\evl_0,
  \evl_1])$. As mentioned earlier, we cannot split off the first-order
  subproblem~$(\der, [\evl_1-\evl_0])$ since it is singular (this is
  an example of a \emph{subproblem that is not a right factor}): If it
  were not, we would know from the Division Lemma~\ref{lem:division}
  that the corresponding unique left factor is~$(\der, [\cum_0^1])$,
  as shown in~\cite[Ex.~28]{RosenkranzRegensburger2008a} and also
  below (Example~\ref{ex:ore-quad}). But of course multiplying out
  just gives the degenerate problem
  \begin{equation*}
    (\der, [\cum_0^1]) \cdot (\der, [\evl_1-\evl_0] = (\der^2,
    [\evl_1-\evl_0])
  \end{equation*}
  and not the desired regularized problem~$(\der^2, [\evl_0, \evl_1])$.
\end{example}

Based on the case of a single boundary condition, we can now embed an
arbitrary boundary problem into a regular one---provided we work with
an umbral character set. This is the subject of the following
\emph{Regularization Lemma}.

\begin{lemma}
  \label{lem:regularization}
  Let~$\Phi$ be an umbral character set for an integro-differential
  algebra~$\galg$. Then for an arbitrary boundary problem~$(T, \bspc)
  \in \allprob$ there is a regular boundary problem~$(S, \mathcal{A})
  \in \regprob$ that has~$(T, \bspc)$ as a subproblem.
\end{lemma}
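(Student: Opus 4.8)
The plan is to construct the surrounding regular problem $(S, \mathcal{A})$ so that $T$ is a right divisor of the new operator $S$ (which is exactly what the subproblem relation $(T,\bspc) \le (S,\mathcal{A})$ demands on the operator side) while at the same time forcing $\Ker{S}$ to contain a long initial segment of monomials; the umbral hypothesis then ensures that the generators of $\bspc$ survive this enlargement, and ordinary initial conditions fill up the rest. Concretely, I would first fix a basis $\beta_1, \dots, \beta_n$ of $\bspc$. Since $\bspc$ is a subspace of $\galg^*$, each $\beta_i$ is a nonzero, hence nondegenerate, Stieltjes condition and is therefore umbral. Because umbrality of $\Phi$ makes Stieltjes conditions linearly independent on $K[x]$ as soon as they are independent on $\galg$ (the remark preceding Lemma~\ref{lem:global-not-local}, applied to $n$ conditions), the restrictions $\beta_1|_{K[x]}, \dots, \beta_n|_{K[x]}$ are linearly independent, so there is a finite $M$ for which $\beta_1, \dots, \beta_n$ are already independent on the subspace $[1, x, \dots, x^M]$.

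The central step is then to let $S$ be a \emph{monic common left multiple} of $T$ and $\der^{M+1}$, which exists because the monic operators of $\diffop$ form a left Ore monoid (the defining property of a left extensible coefficient algebra; cf.\ Proposition~\ref{prop:left-ext}). Writing $S = A T = B \, \der^{M+1}$, the first factorization exhibits $T$ as a right divisor of $S$, while the second yields $\Ker{S} \supseteq \Ker{\der^{M+1}} = [1, x, \dots, x^M]$ by Proposition~\ref{prop:pol-ring}. In particular $\beta_1, \dots, \beta_n$ are linearly independent as functionals on $\Ker{S}$, since they are already independent on the monomial subspace sitting inside it. Setting $L = \ord{S}$, initial value problems are always solvable, so $\Ker{S}$ carries a fundamental system of size $L$, i.e.\ $\dim \Ker{S} = L$.

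It remains to extend $\beta_1, \dots, \beta_n$ to a regular system, and here I would use that the local functionals $\evl, \evl\der, \dots, \evl\der^{L-1}$ form a basis of $(\Ker{S})^*$: any $u \in \Ker{S}$ annihilated by all of them solves the regular initial value problem $(S, [\evl, \dots, \evl\der^{L-1}])$ with zero data, hence $u = 0$, and there are exactly $L = \dim\Ker{S}$ of them. Since this family spans $(\Ker{S})^*$ and the $\beta_i$ are independent in it, a basis-exchange argument lets me select $\gamma_{n+1}, \dots, \gamma_L$ among the $\evl\der^j$ so that $\beta_1, \dots, \beta_n, \gamma_{n+1}, \dots, \gamma_L$ is a basis of $(\Ker{S})^*$. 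These $\gamma_j$ are (local) Stieltjes conditions, so $\mathcal{A} = [\beta_1, \dots, \beta_n, \gamma_{n+1}, \dots, \gamma_L]$ is a boundary space of dimension $L$ containing $\bspc$. Its evaluation matrix against a fundamental system of $S$ is then the matrix of a basis of $(\Ker{S})^*$ against a basis of $\Ker{S}$, hence invertible, and Lemma~\ref{lem:regularity-test} gives that $(S, \mathcal{A})$ is regular with $(T, \bspc) \le (S, \mathcal{A})$.

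The step I expect to be the main obstacle is the reconciliation of the two divisibility demands on $S$: we need $T$ as a right divisor for the subproblem relation, yet we also need $\Ker{S}$ to contain enough monomials to activate umbrality, which forces $\der^{M+1}$ to be a right divisor as well. For general coefficient operators $T$ and $\der^{M+1}$ do not commute, so neither $\der^{M+1} T$ nor $T \der^{M+1}$ is a common left multiple, and this is precisely where left extensibility (the left Ore property of the monic operators) becomes indispensable. Once $S$ is fixed, the surviving linear independence of the $\beta_i$ on $\Ker{S}$ and the completion by the $\evl\der^j$ are then routine linear algebra, and the whole argument specializes to Lemma~\ref{lem:one-cond-high} when $\bspc$ is generated by a single umbral condition.
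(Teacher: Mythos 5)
Your construction has one genuine gap: the ``central step'' --- taking $S$ to be a monic common left multiple of $T$ and $\der^{M+1}$ --- invokes left extensibility of the coefficient algebra $\calg$, but that is \emph{not} a hypothesis of this lemma. The Regularization Lemma assumes only that $\Phi$ is an umbral character set for $\galg$; left extensibility is added as a separate hypothesis only in Lemma~\ref{lem:prob-left-permutable} (compare the two statements), and for good reason: the paper's proof of the present lemma never needs common left multiples. It proceeds by induction on generators $\beta_1, \dots, \beta_m$ of $\bspc$, starting from the initial value problem $(T, [\evl, \dots, \evl\der^{n-1}])$ and, at each step, left-multiplying the regular problem $(\tilde{S}, \tilde{\mathcal{A}})$ already constructed by a problem of the form $(\der^{r+1}, [\evl, \dots, \evl\der^{r-1}, \beta_k \tilde{G}])$ supplied by Lemma~\ref{lem:one-cond-high}, where $\tilde{G}$ is the Green's operator of $(\tilde{S}, \tilde{\mathcal{A}})$. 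The idea you are missing is that one need not force monomials into $\Ker{S}$ in order to exploit umbrality of the \emph{original} conditions: instead one pulls $\beta_k$ back through $\tilde{G}$, notes that $\beta_k \tilde{G}$ is again a Stieltjes condition which is either degenerate (in which case $\beta_k$ already lies in $\tilde{\mathcal{A}}$) or umbral, and then the ordinary product of boundary problems --- which is always defined, no Ore condition required --- yields a regular problem whose operator $\der^{r+1}\tilde{S}$ automatically has $T$ as a right divisor and whose boundary space contains $\beta_k$, since $\beta_k \tilde{G} \tilde{S}$ agrees with $\beta_k$ on $\orth{\tilde{\mathcal{A}}}$. In this way the tension you correctly identify as the main obstacle (needing both $T$ and a power of $\der$ as right divisors) is dissolved rather than solved.

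Apart from this, your argument is sound: nonzero elements of $\bspc \le \galg^*$ are indeed nondegenerate, hence umbral; umbrality of $\Phi$ does upgrade linear independence on $\galg$ to linear independence on $K[x]$, hence on some finite segment $[1, x, \dots, x^M]$; and, granting $S = AT = B\der^{M+1}$, your completion of $\beta_1, \dots, \beta_n$ by initial conditions $\evl\der^j$ via basis exchange in $(\Ker{S})^*$ together with the appeal to Lemma~\ref{lem:regularity-test} is correct. So what you have proved is the lemma under the additional hypothesis that $\calg$ is left extensible. Since the lemma's main application, Lemma~\ref{lem:prob-left-permutable}, does assume left extensibility, your argument would suffice there --- and is arguably more direct, producing the regular extension in one shot rather than by induction --- but it does not establish the lemma in the generality in which it is stated.
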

\begin{proof}
  Let~$(T, \bspc) \in \allprob$ be an arbitrary but fixed boundary
  problem with~$T$ a differential operator of order~$n>0$ and~$\bspc =
  [\beta_1, \dots, \beta_m] \le \galg^*$ of dimension~$m$. We
  write~$\mathcal{J}_n$ for the space of initial conditions~$[\evl,
  \dots, \evl \der^{n-1}]$ and~$\bspc_k$ for the partial boundary
  space~$[\beta_1, \dots, \beta_k]$ with the convention that~$\bspc_0
  = O$. We will now prove that for each~$k = 0, \dots, m$ there is a
  regular boundary problem~$(S, \mathcal{A})$ that has~$(T, \bspc_k)$
  as a subproblem. Taking~$k=m$, the theorem follows.

  We use induction on~$k$. The base case follows by setting~$(S,
  \mathcal{A}) = (T, \mathcal{J}_n)$. For the induction step,
  assume~$(\tilde{S}, \tilde{\mathcal{A}})$ is a regular boundary
  problem that has~$(T, \bspc_{k-1})$ as a subproblem. We have to
  construct a regular problem~$(S, \mathcal{A})$ that has~$(T,
  \bspc_k)$ as a subproblem. Letting~$\tilde{G}$ be the Green's
  operator of~$(\tilde{S}, \tilde{\mathcal{A}})$, assume first
  that~$\beta_k \tilde{G}$ is degenerate. Since~$\beta_k \tilde{G}$
  vanishes on~$\galg$, we obtain
  \begin{equation*}
    \Img{\tilde{G}} = \orth{\tilde{\mathcal{A}}}
    \le \orth{[\beta_k]}
    \qquad\text{and hence}\qquad
    [\beta_k] \le \tilde{\mathcal{A}},
  \end{equation*}
  so we may set~$(S, \mathcal{A}) = (\tilde{S}, \tilde{\mathcal{A}})$
  in that case. Now assume $\beta_k \tilde{G}$ is nondegenerate and
  hence umbral. Lemma~\ref{lem:one-cond-high} yields a regular
  problem~$(\tilde{T}, \tilde{\bspc}) = (\der^{r+1}, [\evl, \dots,
  \evl \der^{r-1}, \beta_k \tilde{G}])$. We define the boundary
  problem
  \begin{align*}
    (S, \mathcal{A}) &= (\tilde{T}, \tilde{\bspc}) \cdot
    (\tilde{S}, \tilde{\mathcal{A}}) = (\tilde{T} \tilde{S}, [\evl
    \tilde{S}, \dots, \evl \der^{r-1} \tilde{S}, \beta_k
    \tilde{G} \tilde{S}] + \tilde{\mathcal{A}})\\
    &= (\tilde{T} \tilde{S}, [\evl \tilde{S}, \dots, \evl
    \der^{r-1} \tilde{S}, \beta_k] + \tilde{\mathcal{A}}),
  \end{align*}
  where the last equality follows since the conditions~$\beta_k u = 0$
  and~$\beta_k \tilde{G} \tilde{S} u = 0$ are equivalent for~$u \in
  \orth{\tilde{\mathcal{A}}}$. The boundary problem $(S, \mathcal{A})$
  is clearly regular since it is the product of two regular
  problems, and it has~$(T, \bspc_k)$ as a subproblem
  because~$\bspc_{k-1} \le \tilde{\mathcal{A}}$ by the induction
  hypothesis.
\end{proof}

We are now ready to prove \emph{left permutability} for the
monoid~$\regprob$ by merging the given factors into a singular problem
that is subsequently embedded into a regular problem by virtue of
Lemma~\ref{lem:regularization}.

\begin{lemma}
  \label{lem:prob-left-permutable}
  Let~$\Phi$ be an umbral character set for an integro-differential
  algebra~$\galg$ with left extensible coefficient
  algebra~$\calg$. Then~$\regprob$ is a left permutable monoid.
\end{lemma}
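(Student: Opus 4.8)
The plan is to verify the left Ore condition for the monoid~$\regprob$ directly: given regular boundary problems~$(T_1, \bspc_1)$ and~$(T_2, \bspc_2)$, I want to exhibit regular problems~$(S_1, \mathcal{A}_1)$ and~$(S_2, \mathcal{A}_2)$ with
\begin{equation*}
  (S_1, \mathcal{A}_1) \cdot (T_1, \bspc_1) = (S_2, \mathcal{A}_2) \cdot (T_2, \bspc_2).
\end{equation*}
By the product formula~\eqref{eq:bp-product} this single equation splits into an \emph{operator} equation $S_1 T_1 = S_2 T_2$ and a \emph{boundary-space} equation $\mathcal{A}_1 T_1 + \bspc_1 = \mathcal{A}_2 T_2 + \bspc_2$, which I would address with separate tools. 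The guiding idea is to build one regular problem~$(S, \mathcal{A})$ that simultaneously admits both~$(T_1, \bspc_1)$ and~$(T_2, \bspc_2)$ as right-factor subproblems; its two factorizations then furnish the two sides above.

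For the operator equation I would use that~$\calg$ is left extensible: the monic operators of~$\calg[\der]$ form a left Ore monoid, so there exist monic~$U_1, U_2$ with~$U_1 T_1 = U_2 T_2 =: T$, a common left multiple into which both~$T_1$ and~$T_2$ divide on the right. For the boundary space I would merge the two conditions into the single (typically singular) problem~$(T, \bspc_1 + \bspc_2) \in \allprob$. Here umbrality of~$\Phi$ enters: the Regularization Lemma~\ref{lem:regularization} embeds this problem as a subproblem of some regular~$(S, \mathcal{A}) \in \regprob$, so that~$T$ right-divides~$S$, say~$S = VT$ with~$V$ monic, and~$\bspc_1 + \bspc_2 \le \mathcal{A}$. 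Writing~$S = (V U_1) T_1 = (V U_2) T_2$ (with each~$V U_i$ monic), each~$(T_i, \bspc_i)$ is now a regular right-factor subproblem of~$(S, \mathcal{A})$, since~$T_i$ right-divides~$S$, since~$\bspc_i \le \bspc_1 + \bspc_2 \le \mathcal{A}$, and since~$(T_i, \bspc_i)$ is regular by hypothesis.

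I would then conclude by applying the Factorization Theorem~\ref{thm:fact-bp} twice. For each~$i$, the operator factorization~$S = (V U_i) T_i$ lifts---choosing the right factor to be the prescribed regular problem~$(T_i, \bspc_i) \le (S, \mathcal{A})$, as the remark after Theorem~\ref{thm:fact-bp} permits---to~$(S, \mathcal{A}) = (V U_i, \mathcal{A}_i) \cdot (T_i, \bspc_i)$ with~$(V U_i, \mathcal{A}_i)$ regular. Setting~$(S_i, \mathcal{A}_i) = (V U_i, \mathcal{A}_i)$ yields~$(S_1, \mathcal{A}_1) \cdot (T_1, \bspc_1) = (S, \mathcal{A}) = (S_2, \mathcal{A}_2) \cdot (T_2, \bspc_2)$, so the common value~$(S, \mathcal{A})$ lies in~$\regprob (T_1, \bspc_1) \cap \regprob (T_2, \bspc_2)$, which is exactly the left Ore condition for~$\regprob$.

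The operator part is essentially free once left extensibility is assumed, so the main obstacle is the boundary space: in isolation~$(T, \bspc_1 + \bspc_2)$ is usually singular, since~$\dim(\bspc_1 + \bspc_2)$ need not equal~$\ord T$ and its evaluation matrix may degenerate, so it cannot be factored as it stands. The umbral hypothesis is precisely what the Regularization Lemma needs to repair this, by adjoining polynomial initial conditions together with a fundamental monomial on which the adjoined condition does not vanish. The remaining delicate point is that being a subproblem of~$(S, \mathcal{A})$ is weaker than being a right factor (cf.\ Example~\ref{ex:embed-single-cond}); here the regularity of each~$(T_i, \bspc_i)$ is what lets the ``arbitrary right factor'' clause of the Factorization Theorem promote the subproblem to an honest factor.
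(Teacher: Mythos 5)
Your proposal is correct and follows essentially the same route as the paper: obtain a common left multiple~$T = \tilde{T}_1 T_1 = \tilde{T}_2 T_2$ from left extensibility, merge the boundary spaces into the (generally singular) problem~$(T, \bspc_1 + \bspc_2)$, embed it into a regular~$(S, \mathcal{A})$ via the Regularization Lemma~\ref{lem:regularization}, and then extract the two lifted factorizations over the prescribed regular right factors~$(T_i, \bspc_i)$. The only cosmetic difference is that you invoke the Factorization Theorem~\ref{thm:fact-bp} together with the remark following it, whereas the paper cites the Division Lemma~\ref{lem:division} directly---but that remark is precisely the content of the Division Lemma, so the argument is the same.
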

\begin{proof}
  Given~$(T_1, \bspc_1), (T_2, \bspc_2) \in \regprob$, we must
  find~$(\tilde{T}_1, \tilde{\bspc}_1), (\tilde{T}_2, \tilde{\bspc}_2)
  \in \regprob$ such that~$(\tilde{T}_1, \tilde{\bspc}_1) \cdot (T_1,
  \bspc_1) = (\tilde{T}_2, \tilde{\bspc}_2) \cdot (T_2,
  \bspc_2)$. Since~$\calg$ is left extensible,
  Proposition~\ref{prop:left-ext} yields a common left multiple~$T$
  and cofactors~$\tilde{T}_1$ and~$\tilde{T}_2$ such that~$T =
  \tilde{T}_1 T_1 = \tilde{T_2} T_2$. Now set~$\bspc = \bspc_1 +
  \bspc_2$. By Lemma~\ref{lem:regularization} there is a regular
  boundary problem~$(S, \mathcal{A})$ that has~$(T, \bspc)$ as a
  subproblem. But then the boundary problem~$(S, \mathcal{A})$
  has~$(T_1, \bspc_1)$ and~$(T_2, \bspc_2)$ as regular subproblems,
  and Lemma~\ref{lem:division} yields regular boundary
  problems~$(\tilde{T}_1, \tilde{\bspc}_1)$ and~$(\tilde{T}_2,
  \tilde{\bspc}_2)$ such that
  \begin{equation*}
    (S, \mathcal{A}) = (\tilde{T}_1, \tilde{\bspc}_1) \cdot
    (T_1, \bspc_1) = (\tilde{T}_2, \tilde{\bspc}_2) \cdot
    (T_2, \bspc_2)
  \end{equation*}
  as claimed.
\end{proof}

\begin{remark}
  Lemma~\ref{lem:prob-left-permutable} is also true if one
  replaces~$\regprob$ by the monoid of well-posed boundary problems
  defined before Eq.~\eqref{eq:proj-times-fri}. This can be seen
  readily by inspecting the above proof (and the proofs of
  Lemma~\ref{lem:division} and~\ref{lem:regularization}).
\end{remark}

Let us give a simple example of a nontrivial \emph{Ore
  quadruple}~$(T_1, \bspc_1)$, $(T_2, \bspc_2)$, $(\tilde{T_1},
\tilde{\bspc_1})$, $(\tilde{T_2}, \bspc_2)$ that will also serve a
good purpose later on.

\begin{example}
  \label{ex:ore-quad}
  In the standard setting of Example~\ref{ex:standard}, consider the
  two simplest first-order problems on~$[0,1]$, namely
  \begin{equation*}
    (T_1, \bspc_1) = (\der, [\evl_0])
    \qquad\text{and}\qquad
    (T_2, \bspc_2) = (\der, [\evl_1]).
  \end{equation*}
  In that case, we have of course~$T = \der$, and we apply
  Lemma~\ref{lem:regularization} to the boundary problem~$(S,
  \mathcal{A}) = (\der, [\evl_0, \evl_1])$.  We end up with
  \begin{equation*}
    (\der, [\cum_0^1]) \cdot (\der, [\evl_0]) =(\der^2, [\evl_0,
    \evl_1]),
  \end{equation*}
  which is already regular. The corresponding cofactors are then
  \begin{equation*}
    (\tilde{T}_1, \tilde{\bspc}_1) = (\der, [\cum_0^1])
    \qquad\text{and also}\qquad
    (\tilde{T}_2, \tilde{\bspc}_2) = (\der, [\cum_0^1])
  \end{equation*}
  since clearly~$(\der^2, [\cum_0^1 \der, \evl_0]) = (\der^2,
  [\cum_0^1 \der, \evl_1]) = (\der^2, [\evl_0, \evl_1])$.
\end{example}

As mentioned before, $\regprob$ is left permutable but \emph{not right
  permutable}. This means there are boundary problems that do not have
a common right factor. Actually, more is true: Even if we start from
two distinct problems with the same differential operator, \emph{any}
common right multiple comes from a singular factor.

\begin{proposition}
  \label{prop:not-right-permutable}
  Let~$\Phi$ be an arbitrary character set for an integro-differential
  algebra~$\galg$ with coefficient algebra~$\calg$. Assume~$(T,
  \bspc_1), (T, \bspc_2) \in \regprob$ have a common right multiple
  \begin{equation}
    \label{eq:try-right-mult}
    (T, \bspc_1) (S, \cspc_1) = (T, \bspc_2) (S, \cspc_2)
  \end{equation}
  for some right factors~$(S, \cspc_1), (S, \cspc_2) \in
  \allprob$. Then both~$(S, \cspc_1)$ and~$(S, \cspc_2)$ are singular
  whenever~$\bspc_1 \ne \bspc_2$.
\end{proposition}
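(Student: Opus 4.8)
The plan is to read off the equality of the two products as an equality of boundary spaces and then extract a subspace that is intrinsic to that common space by restricting functionals to $\Ker S$. Expanding both sides of~\eqref{eq:try-right-mult} by the product rule~\eqref{eq:bp-product}, the differential operators agree automatically (both are $TS$), so the equation reduces to the identity of boundary spaces
\[
  \bspc_1 S \dirs \cspc_1 = \bspc_2 S \dirs \cspc_2 =: \mathcal{D},
\]
where both sums are direct by \cite[Prop.~3.2]{RegensburgerRosenkranz2009} (as already invoked in Lemma~\ref{lem:reg-factors}). Since $(T, \bspc_1)$ and $(T, \bspc_2)$ are regular over the \emph{same} operator $T$, Lemma~\ref{lem:regularity-test} gives $\dim \bspc_1 = \dim \bspc_2 = \ord T =: n$. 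I would also record that the pullback $\beta \mapsto \beta S$ is injective on $\galg^*$: because $S$ is monic, the associated initial value problem is solvable, so $S \fri{S} = 1$ exhibits $S$ as surjective, whence $\beta S = 0$ forces $\beta = 0$. In particular $\dim(\bspc_1 S) = \dim(\bspc_2 S) = n$.

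The key observation is that restriction to $\Ker S$ kills the pullback summands and thereby separates them from the $\cspc_i$. First I would note that $(\beta S)(u) = \beta(Su) = 0$ for every $\beta \in \galg^*$ and every $u \in \Ker S$. Writing $n' = \ord S$, so $\dim \Ker S = n'$, and letting $\rho\colon \mathcal{D} \to (\Ker S)^*$ denote restriction of functionals to $\Ker S$, this shows that \emph{both} $\bspc_1 S$ and $\bspc_2 S$ lie in $\ker \rho$. The point is that $\ker \rho$ is intrinsic to $\mathcal{D}$ and $S$ and refers to neither of the two decompositions.

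Now, aiming at the contrapositive, suppose $(S, \cspc_1)$ is regular. Then Lemma~\ref{lem:regularity-test} gives $\dim \cspc_1 = n'$ together with an invertible evaluation matrix; the latter says exactly that $\rho$ restricts to an isomorphism $\cspc_1 \to (\Ker S)^*$. Combining this with the decomposition $\mathcal{D} = \bspc_1 S \dirs \cspc_1$ and a dimension count (here $\dim \mathcal{D} = n + n'$ while $\dim \Img \rho = n'$), I would conclude $\ker \rho = \bspc_1 S$ precisely. Feeding in the \emph{other} decomposition then yields $\bspc_2 S \le \ker \rho = \bspc_1 S$, and equality of dimensions forces $\bspc_2 S = \bspc_1 S$; injectivity of the pullback finally gives $\bspc_1 = \bspc_2$. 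Hence $\bspc_1 \ne \bspc_2$ excludes regularity of $(S, \cspc_1)$, and the symmetric argument excludes it for $(S, \cspc_2)$.

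I expect the main obstacle to be the bookkeeping that pins down $\ker \rho = \bspc_1 S$ as a genuinely decomposition-independent object: one must simultaneously use the directness of $\bspc_i S \dirs \cspc_i$, the dimension identity $\dim \bspc_i = n$ furnished by regularity of the left factors, and the surjectivity of $S$. The surjectivity (needed so that $\dim \bspc_i S = n$ via injectivity of the pullback) is the easily overlooked supporting fact, whereas the directness of the sum is what makes the final dimension count clean.
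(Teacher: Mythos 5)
Your argument is correct, but it runs along a different track than the paper's. Both proofs pivot on the same two facts---that every pullback $\beta S$ annihilates $\Ker{S}$, and that a regular $(S,\cspc)$ admits no nonzero element of $\cspc$ vanishing on $\Ker{S}$---but the paper exploits them \emph{locally}: it picks $\beta_1\in\bspc_1\setminus\bspc_2$, writes $\beta_1 S=\beta_2 S+\gamma_2$ inside the common boundary space, and observes that $\gamma_2=(\beta_1-\beta_2)S$ is a nonzero element of $\cspc_2\cap\orth{\Ker{S}}$ (nonzero by the same injectivity of $\beta\mapsto\beta S$ that you establish), which immediately witnesses singularity of $(S,\cspc_2)$. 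No dimension counting and no directness of any sum is needed there. Your proof instead runs the contrapositive \emph{globally}, identifying $\ker\rho$ with $\bspc_1 S$ by rank--nullity and concluding $\bspc_1 S=\bspc_2 S$. What your route buys is a sharper structural statement (regularity of one right factor pins down the whole pullback summand as the kernel of restriction to $\Ker{S}$); what the paper's route buys is brevity and independence from any dimension bookkeeping.

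One caveat on your opening step: the blanket claim that \emph{both} sums $\bspc_i S+\cspc_i$ are direct ``by Prop.~3.2'' is not available here, since the $(S,\cspc_i)$ are permitted---indeed expected---to be singular, and directness can genuinely fail for a singular right factor (e.g.\ $\bspc S$ may meet $\cspc$). Fortunately your argument only ever uses directness of the decomposition $\bspc_1 S\dirs\cspc_1$ \emph{after} assuming $(S,\cspc_1)$ regular, and in that case it does hold: $\bspc_1 S\le\ker\rho$ while $\rho$ is injective on $\cspc_1$, so the intersection is trivial. You should derive the directness from the contrapositive hypothesis in this way rather than asserting it for both decompositions up front; with that repair the proof is complete.
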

\begin{proof}
  Assume~$\bspc_1 \ne \bspc_2$. Projecting onto the boundary spaces,
  we have~$\bspc_1 S + \cspc_1 = \bspc_2 S + \cspc_2$. If~$\bspc_1 =
  O$ or~$\bspc_2 = O$, we have~$T=1$, so in fact~$\bspc_1 =
  \bspc_2$. Choosing~$\beta_1 \in \bspc_1 \setminus \bspc_2$ we have
  $\beta_1 S \in \bspc_2 S + \mathcal{C}_2$ but~$\beta_1 S \notin
  \bspc_2 S$. Hence we can write~$\beta_1 S = \beta_2 S + \gamma_2$
  for some~$\beta_2 \in \bspc_2$ and nonzero~$\gamma_2 \in
  \cspc_2$. But then~$\gamma_2 = (\beta_1 - \beta_2) S \in
  \orth{\Ker{S}} \cap \cspc_2$ implies that~$(S, \cspc_2)$ is
  singular. By symmetry, we see that~$(S, \cspc_2)$ is singular as
  well.
\end{proof}

Having established that~$\regprob$ is left permutable, the only thing
missing for the localization is left reversibility, which is very easy
in our case. Hence we obtain the desired \emph{left ring of fractions}.

\begin{theorem}
  \label{thm:localization}
  Let~$\Phi$ be an umbral character set for the integro-differential
  algebra $(\galg, \der, \cum)$ with left extensible coefficient
  algebra~$\calg$. Then there exists a left ring of
  fractions~$\locprob$ of the ring of boundary problems~$K \regprob$
  with denominator set~$\regprob$.
\end{theorem}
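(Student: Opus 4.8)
The plan is to verify the hypotheses of Corollary~\ref{cor:ore-from-mon}, which reduces the whole task to checking that the monoid $\regprob$ of regular boundary problems is a left Ore monoid. Indeed, the ring of boundary problems $K\regprob$ is by definition the monoid algebra of $\regprob$, so once we know $\regprob$ is left permutable and left reversible, Corollary~\ref{cor:ore-from-mon} delivers the left ring of fractions $\loc{\regprob}{(K\regprob)} = \locprob$ directly, via the transfer result Lemma~\ref{lem:per-from-mon} and Theorem~\ref{thm:ore-localization}.

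First I would invoke Lemma~\ref{lem:prob-left-permutable}: under precisely the hypotheses of the present theorem (an umbral character set $\Phi$ and a left extensible coefficient algebra $\calg$), that lemma states that $\regprob$ is left permutable. This is the substantive half of the Ore condition, and it has already been done---the heavy lifting sits inside the Regularization Lemma~\ref{lem:regularization} and the Division Lemma~\ref{lem:division}, both of which feed into Lemma~\ref{lem:prob-left-permutable}.

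Second I would establish left reversibility of the monoid $\regprob$, which the text flags as ``very easy in our case.'' Recall the monoid version of reversibility: for all $s, s_1, s_2 \in \regprob$ with $s_1 s = s_2 s$ there must exist $\tilde{s} \in \regprob$ with $\tilde{s} s_1 = \tilde{s} s_2$. Here I expect the proof to be nearly immediate: one may simply take $\tilde{s}$ to be the neutral element $(1, O)$, because $s_1 s = s_2 s$ already forces $s_1 = s_2$. To see this cancellation, write $s = (T, \bspc)$, $s_1 = (T_1, \bspc_1)$, $s_2 = (T_2, \bspc_2)$; projecting the product~\eqref{eq:bp-product} onto the differential operators gives $T_1 T = T_2 T$ in $\calg[\der]$, and since $\calg[\der]$ is a domain (the monic operators form a left reversible monoid, as shown inside Proposition~\ref{prop:left-ext}) we get $T_1 = T_2$. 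The boundary-space components then satisfy $\bspc_1 T + \bspc = \bspc_2 T + \bspc$, and because the sum $\bspc_i T + \bspc$ in~\eqref{eq:bp-product} is always direct, this yields $\bspc_1 T = \bspc_2 T$, whence $\bspc_1 = \bspc_2$ since right multiplication by the fixed operator $T$ acts injectively on boundary spaces. Thus $s_1 = s_2$ and reversibility holds trivially.

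Having both properties, I conclude that $\regprob$ is a left Ore monoid, and Corollary~\ref{cor:ore-from-mon} finishes the proof. The main obstacle is not in this theorem at all but is already discharged upstream: the genuine difficulty lies in left permutability (Lemma~\ref{lem:prob-left-permutable}), whose proof requires embedding a merged singular problem into a regular one via Lemma~\ref{lem:regularization}---and that in turn rests essentially on the umbrality of $\Phi$ and the left extensibility of $\calg$. The present theorem is therefore a clean corollary that merely assembles the pieces, with the cancellation argument for reversibility being the only new (and routine) computation.
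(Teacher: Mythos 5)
Your overall reduction is exactly the paper's: invoke Corollary~\ref{cor:ore-from-mon}, import left permutability from Lemma~\ref{lem:prob-left-permutable}, and supply left reversibility of the monoid~$\regprob$. Where you diverge is in the reversibility argument, and there your proof has a gap. The paper does \emph{not} claim that $\regprob$ is right cancellative; instead it multiplies both sides of $(T_1,\bspc_1)(T,\bspc)=(T_2,\bspc_2)(T,\bspc)$ on the left by a regular initial value problem $(\tilde{T},\tilde{\bspc})$ with $\tilde{T}T_1=\tilde{T}T_2$ (available by left extensibility) and then applies the Division Lemma~\ref{lem:division} to the regular right subproblem $(T,\bspc)$ to conclude $(\tilde{T},\tilde{\bspc})(T_1,\bspc_1)=(\tilde{T},\tilde{\bspc})(T_2,\bspc_2)$ --- no cancellation of boundary spaces is ever needed.

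Your stronger claim, that $s_1 s=s_2 s$ already forces $s_1=s_2$, happens to be true, but not for the reason you give. The step ``$\bspc_1 T+\bspc=\bspc_2 T+\bspc$ and the sum is direct, hence $\bspc_1 T=\bspc_2 T$'' is a non sequitur: $A\dirs C=B\dirs C$ does not imply $A=B$ (in $K^2$ take $A=[(1,0)]$, $B=[(1,1)]$, $C=[(0,1)]$). The repair needs the regularity of $(T,\bspc)$ rather than the directness of the sum: every functional $\beta T$ annihilates $\Ker{T}$, so $\bspc_i T\le\orth{\Ker{T}}$, while any $\gamma\in\bspc\cap\orth{\Ker{T}}$ vanishes on $\Ker{T}\dirs\orth{\bspc}=\galg$ and is therefore zero; hence $\bspc_1 T=(\bspc_1 T+\bspc)\cap\orth{\Ker{T}}=(\bspc_2 T+\bspc)\cap\orth{\Ker{T}}=\bspc_2 T$. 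The last step $\bspc_1=\bspc_2$ then does follow as you say, because $T$ is surjective (it has the fundamental right inverse $\fri{T}$), so $\beta\mapsto\beta T$ is injective on $\galg^*$. With this repair your route is a legitimate alternative: it establishes the stronger fact that $\regprob$ is right cancellative, so reversibility holds trivially with $\tilde{s}=(1,O)$, whereas the paper's Division Lemma argument sidesteps any analysis of the boundary spaces at the cost of producing a nontrivial witness $\tilde{s}$.
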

\begin{proof}
  By Corollary~\ref{cor:ore-from-mon} it suffices to check
  that~$\regprob$ is an Ore monoid. Since left permutability
  of~$\regprob$ has been shown in
  Lemma~\ref{lem:prob-left-permutable}, it remains to show
  that~$\regprob$ is a left reversible monoid. So assume we
  have~$(T_1, \bspc_1) (T, \bspc) = (T_2, \bspc_2) (T, \bspc)$ for
  some regular problems~$(T_1, \bspc_1), (T_2, \bspc_2), (T,
  \bspc)$. By left extensibility of~$\calg$ we have~$\tilde{T} T_1 =
  \tilde{T} T_2$ for some monic~$\tilde{T} \in \calg[\der]$ of
  order~$n$. Let~$\tilde{\bspc} = [\evl, \cdots, \evl \der^{n-1}]$ be
  the corresponding space of initial conditions so that~$(\tilde{T},
  \tilde{\bspc})$ is a regular problem. Then we obtain the regular
  product problem
  \begin{equation*}
    \left( (\tilde{T}, \tilde{\bspc}) (T_1, \bspc_1) \right)
    (T, \bspc) = \left( (\tilde{T}, \tilde{\bspc}) (T_2,
      \tilde{\bspc}) \right) (T, \bspc),
  \end{equation*}
  where the two parenthesized factors have the same differential
  operator by the choice of~$\tilde{T}$. Since~$(T, \bspc)$ is a
  regular right factor of this problem, the Division
  Lemma~\ref{lem:division} implies that~$(\tilde{T}, \tilde{\bspc})
  (T_1, \bspc_1) = (\tilde{T}, \tilde{\bspc}) (T_2,
  \tilde{\bspc})$. We conclude that the monic differential operators
  of~$\calg[\der]$ are indeed left reversible.
\end{proof}

The fraction ring~$\locprob$ shall be called the \emph{ring of
  methorious operators} (from the Greek word \greek{μεθόριος} meaning
``making up a boundary''). Note that it exists in particular in the
smooth setting: If one chooses~$\galg = C^\infty(\R)$, every character
is umbral (Proposition~\ref{prop:cinf-weierstrass}), and any
Noetherian domain can be used for the coefficient algebra
(Proposition~\ref{prop:left-ext}), in particular~$\calg = K[x]$
or~$\calg = C^\omega(\R)$. In the sequel, we shall confine ourselves
to the latter setting.

As already observed after Theorem~\ref{thm:ore-localization}, we must
expect that~$\epsilon\colon K \regprob \rightarrow \locprob$ \emph{is
  not an embedding}. For example, we have $N = (\der, [\evl_0]) -
(\der, [\evl_1]) \in \Ker{\epsilon}$ in the standard setting of~$\galg
= C^\infty(\R)$. For checking this, we use the characterization
of~$\Ker{\epsilon}$ given in Definition~\ref{def:localization}: It
suffices to find a regular problem that annihilates~$N$ from the
left. Indeed, we have~$(\der, [\cum_0^1]) N = 0$ since we know from
Example~\ref{ex:ore-quad} that
\begin{equation*}
  (\der, [\cum_0^1]) (\der, [\evl_0]) = (\der^2, [\evl_0, \evl_1]) =
  (\der, [\cum_0^1]) (\der, [\evl_1]) .
\end{equation*}
At the moment we do not know~$\Ker{\epsilon}$ in explicit
form. However, we have the following conjecture.

\begin{conjecture}
  \label{conj:extension-kernel}
  Let~$\Phi$ be an umbral character set for an integro-differential
  algebra~$\galg$ with left extensible coefficient algebra~$\calg$,
  and let~$\epsilon\colon K \regprob \rightarrow \locprob$ be the
  extension into the ring of methorious operators. Then we have
  $\sum_i \lambda_i \, (T_i, \bspc_i) \in \Ker{\epsilon}$ iff $\sum_i
  \lambda_i G_i \in (\Phi)$, where $G_i$ is the Green's operator
  of~$(T_i, \bspc_i)$.
\end{conjecture}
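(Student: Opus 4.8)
The plan is to recast the kernel condition of Definition~\ref{def:localization}(c) through the Green's operator map and the decomposition of Proposition~\ref{prop:intdiffop-direct-sum}. By that condition, $r = \sum_i \lambda_i (T_i, \bspc_i)$ lies in $\Ker{\epsilon}$ exactly when $sr = 0$ in $K\regprob$ for some regular $s \in \regprob$. Let $\Gamma \colon K\regprob \to \intdiffop[\Phi]$ be the $K$-linear extension of the anti-isomorphism~\eqref{eq:anti-isomorphism} sending a regular problem to its Green's operator, so that $\Gamma(r) = \sum_i \lambda_i G_i$ and $\Gamma(xy) = \Gamma(y)\,\Gamma(x)$. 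Writing $Q := \intdiffop[\Phi]/(\Phi)$, which is $\gdiffop \dirs \gintop$ as a $K$-module, and $\pi$ for the projection, the factorization $G = (1-P)\fri{T}$ from~\eqref{eq:proj-times-fri} with $P \in (\Phi)$ shows that $\Psi := \pi \circ \Gamma$ sends $(T, \bspc) \mapsto \pi(\fri{T})$, a value depending only on the differential operator. Since $\sum_i \lambda_i G_i \in (\Phi)$ is equivalent to $\Psi(r) = 0$, the conjecture is precisely the statement $\Ker{\epsilon} = \Ker{\Psi}$.

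For the inclusion $\Ker{\epsilon} \subseteq \Ker{\Psi}$, I would first observe that every $\fri{T}$ is a unit in $Q$: one has $T\fri{T} = 1$ by definition of the fundamental right inverse, while $\fri{T}\,T = 1 - P_0$ with $P_0 \in (\Phi)$ the projector onto $\Ker{T}$ coming from the initial value problem, so $\pi(\fri{T})$ is a two-sided inverse of $\pi(T)$ in $Q$. Hence $\Psi$ is a ring homomorphism $K\regprob \to Q^{\mathrm{op}}$ carrying the denominator set $\regprob$ to units, and the universal property of the left ring of fractions factors it through $\epsilon$, giving $\Ker{\epsilon} \subseteq \Ker{\Psi}$. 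Concretely, from $sr = 0$ one gets $\Gamma(r)\,\Gamma(s) = 0$; projecting to $Q$ and cancelling the unit $\pi(\Gamma(s))$ yields $\pi(\Gamma(r)) = 0$.

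The substantial inclusion is $\Ker{\Psi} \subseteq \Ker{\epsilon}$. Assume $H := \Gamma(r) = \sum_i \lambda_i G_i \in (\Phi)$ and write $H = \sum_j f_j \beta_j$ with finitely many Stieltjes conditions $\beta_j$ and $f_j \in \galg$, using that $(\Phi)$ is the left $\galg$-module generated by $\mathopen|\Phi)$. Applying the Regularization Lemma~\ref{lem:regularization} to the boundary space $[\beta_1, \dots, \beta_r]$ produces a regular problem $s = (S, \mathcal{A}) \in \regprob$ with $[\beta_1, \dots, \beta_r] \le \mathcal{A}$. For its Green's operator $G_S$ we have $\Img{G_S} = \orth{\mathcal{A}} \le \orth{[\beta_j]}$, so $\beta_j G_S = 0$ for every $j$ and therefore $H G_S = \sum_j f_j\,\beta_j G_S = 0$. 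By the anti-homomorphism property this says $\Gamma(sr) = H G_S = 0$.

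It remains to pass from $\Gamma(sr) = 0$ back to $sr = 0$ in $K\regprob$, and this is where I expect the real difficulty to lie. What is needed is the injectivity of $\Gamma$ — equivalently, the $K$-linear independence of the Green's operators of distinct regular boundary problems — at least on the finite-dimensional span generated by the products $s\,(T_i, \bspc_i)$. My plan for this is to exploit the direct sum $\intdiffop[\Phi] = \gdiffop \dirs \gintop \dirs (\Phi)$: grouping the summands of $\Gamma(sr)$ according to their monic differential operator and using $G = (1-P)\fri{T}$ reduces a dependence among Green's operators to a dependence among the pure integral operators $\fri{T} \in \gintop$ together with a residual relation living inside $(\Phi)$; one then has to show that the fundamental right inverses of distinct monic operators are independent in $\gintop$ and that the boundary parts cannot conspire to cancel. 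Establishing this independence uniformly is the crux, and I suspect it is exactly the reason the statement is recorded only as a conjecture; an affirmative answer would moreover exhibit the induced map as an embedding $\locprob \hookrightarrow Q^{\mathrm{op}}$, tying the result back to the localized operator rings discussed in the Introduction.
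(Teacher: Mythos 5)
The paper records this statement only as a conjecture and offers no proof---just the informal heuristic following it---so your argument has to be judged on its own merits rather than against a reference proof. Its first half holds up well. Passing to the quotient $Q=\intdiffop[\Phi]/(\Phi)$ is legitimate because $(\Phi)$ is a two-sided ideal; since the projector $P$ of a regular problem and the projector $P_0$ of the associated initial value problem both lie in $(\Phi)$ (being of the form $\sum_j u_j\beta_j$ with $\beta_j$ Stieltjes conditions), the identities $T\fri{T}=1$ and $\fri{T}T=1-P_0$ do show that $\pi(G)=\pi(\fri{T})$ is a two-sided inverse of $\pi(T)$ in $Q$, and the universal property of the Ore localization (or, concretely, cancelling the unit $\pi(G_S)$ in $\pi(\Gamma(r))\pi(\Gamma(s))=0$) gives the implication $r\in\Ker{\epsilon}\Rightarrow\sum_i\lambda_i G_i\in(\Phi)$ cleanly. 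The regularization step in the converse direction, producing $s=(S,\mathcal{A})$ with $\beta_j G_S=0$ and hence $\Gamma(sr)=HG_S=0$, is also correct.

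The gap you flag at the end is genuine, but it is worse than you suggest: the independence statement you propose as the crux is false, so that route cannot be completed. Take $\galg=C^\infty(\R)$ and $\beta=\tfrac12(\evl_0+\evl_1)$. Then $(\der,[\evl_0])$, $(\der,[\evl_1])$ and $(\der,[\beta])$ are three distinct regular problems sharing the differential operator $\der$, yet their Green's operators satisfy the exact relation $G_\beta=\tfrac12 G_0+\tfrac12 G_1$, since $G_\beta f=\cum f-\tfrac12\cum_0^1 f=\tfrac12\cum f+\tfrac12(1-\evl_1)\cum f$. So $\Gamma$ is not injective, not even on the span of problems with a common monic operator: the ``boundary parts'' do conspire to cancel. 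This does not threaten the conjecture---here $r=(\der,[\beta])-\tfrac12(\der,[\evl_0])-\tfrac12(\der,[\evl_1])$ is indeed killed by $s=(\der,[\cum_0^1])$, because all three products collapse to $(\der^2,[\evl_0,\evl_1])$---but it shows the last step needs a different idea. Since $K\regprob$ is free as a $K$-module on $\regprob$, the condition $sr=0$ demanded by Definition~\ref{def:localization}(c) means the boundary problems $s\cdot(T_i,\bspc_i)$ must coincide in groups whose coefficients sum to zero; that is a statement about collisions in the monoid $\regprob$, not about a linear relation among Green's operators, and the $s$ produced by your regularization step is not engineered to force such collisions. Bridging that discrepancy is the actual open content of the conjecture.
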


The previous example is a case in point. The \emph{intuitive reason}
for our conjecture is this: Generically, a linear combination of
Green's operators has a finite-dimensional cokernel (meaning its image
is annihilated by just finitely many functionals---in the case of a
single operator these functionals are the given boundary
conditions). Such an operator is in some sense ``almost
invertible''. But if a linear combination degenerates into an element
of~$(\Phi)$, its image becomes one-dimensional, and we cannot expect
to invert such an ``operator''.

\section{The Module of Methorious Functions}
\label{sec:methorious-functions}
% ========================================

Extending an operator ring is much more useful if its elements may
still be viewed as \emph{operating} on some---presumably
extended---domain of ``functions''. As explained in the Introduction,
the operational calculus of Mikusi{\'n}ski avoids this problem by
merging ``operators'' and ``operands'' in the Mikusi{\'n}ski field. In
contrast, we shall follow the algebraic analysis approach of
keeping operators and operands in separate structures.

To this end we construct a suitable \emph{module of fractions} that
extends a given integro-differential algebra~$(\galg, \der, \cum)$. In
analogy to the Mikusi{\'n}ski approach we would have to start from a
ring~$R$ of integral operators acting on~$\galg$, construct its
localization~$R^\star$ and let it act on the corresponding
localization~$\galg^\star$. However, this does not work for the
following reason: The natural candidate for~$R$ would be the
$K$-algebra of Green's operators for regular boundary problems (since
this is the only ring over which we have sufficient control). But this
monoid/ring is dual to the regular boundary problems, as we know
from~\eqref{eq:anti-isomorphism}. Since~$\regprob$ is a left Ore
monoid but not a right Ore monoid, $R$ is a right but not a left Ore
ring. Hence we can only create a \emph{right} ring of
fractions~$R^\star$ and correspondigly only a \emph{right} module of
fractions that should extend~$\galg$. But the given
integro-differential algebra~$\galg$ only has a natural \emph{left}
action of~$R \subseteq \intdiffop[\Phi]$.

In fact, it is not quite true that one cannot construct a left module
of fractions from a given left module~$M$ over a right Ore
ring~$R$. If $R^*$ is the right ring of fractions, one may of course
construct the \emph{usual scalar extension} $M^\star = R^\star
\otimes_R M$. One may then refer to~$M^\star$ as a left module of
fractions~\cite[Prop.~7.2]{Skoda2006}. But the problem with this
construction is that we do not know anything about its structure. For
example, there is no useful characterization of the kernel of~$M
\rightarrow M^\star$ as in the Ore construction of
Theorem~\ref{thm:fraction-module} below.

We must therefore take the \emph{differential operators} as our
starting point, and this is why we have constructed the
localization~$\locprob$ of the ring of boundary
problems~$K\regprob$. One might be tempted to take just the simpler
ring~$\calg[\der]$ instead of the unwieldy monoid
ring~$K\regprob$. But this would be too simplistic: In that case one
gets a two-sided inverse~$\der^{-1}$ of the differential operator, no
new functions are generated, and of course we cannot tackle boundary
problems in such a setting. It is essential to work with the richer
structure~$K\regprob$, for which we will have to set up a suitable
left action on~$\galg$.

But first let us briefly review the general setting for the
\emph{localization of modules}. As in
Theorem~\ref{thm:ore-localization}, we go back to the setting of
general rings~$R$. In this case we can construct the module of
fractions in pretty much the same way as the ring of
fractions~\cite[Cor.~II.3.3]{Stenstrom1975}.

\begin{theorem}
  \label{thm:fraction-module}
  Let~$M$ be a left $R$-module, and let~$S \subseteq R$ be a
  multiplicative, right permutable and right reversible denominator
  set~$S \subseteq R$. Then there exists a left
  $\loc{S}{R}$-module~$\loc{S}{M}$, and the kernel of the
  extension~$\mu\colon M \rightarrow \loc{S}{M}$ consists of those $u
  \in M$ for which there exists an~$s \in S$ with~$su=0$.
\end{theorem}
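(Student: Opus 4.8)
The plan is to mirror, in the module setting, the Ore construction for the ring of fractions (Theorem~\ref{thm:ore-localization}), dualizing left-hand conditions to right-hand ones throughout. The target module~$\loc{S}{M}$ should consist of formal fractions~$s^{-1}u$ with~$s \in S$ and~$u \in M$, and the left action of a ring fraction~$t^{-1}r \in \loc{S}{R}$ on a module fraction~$s^{-1}u$ should be defined so as to make~$\loc{S}{M}$ into a genuine left $\loc{S}{R}$-module. The first step is to set up the correct equivalence relation on the set of pairs~$(s, u) \in S \times M$. By analogy with the ring case, I would declare~$(s_1, u_1) \sim (s_2, u_2)$ whenever there exist~$a_1, a_2 \in R$ with~$a_1 s_1 = a_2 s_2 \in S$ and~$a_1 u_1 = a_2 u_2$ in~$M$. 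The right permutability of~$S$ (the Ore condition~\eqref{eq:ore-cond} on the right) is exactly what guarantees that common left multiples~$a_1 s_1 = a_2 s_2$ landing in~$S$ can always be found, so that any two fractions can be brought to a common denominator; right reversibility~\eqref{eq:reversible} is then needed to show transitivity of~$\sim$, precisely as it is used for well-definedness in the ring construction.

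The main steps, in order, are as follows. First, prove that~$\sim$ is an equivalence relation, with reflexivity and symmetry immediate and transitivity requiring the bridging-of-denominators argument that invokes both right permutability and right reversibility. Second, define addition of fractions by bringing them to a common denominator via the Ore condition, and check that the sum is independent of the choices made; this makes~$\loc{S}{M}$ an abelian group. Third, define the scalar action: to apply~$t^{-1}r$ to~$s^{-1}u$, use the right Ore condition to rewrite~$r\,s^{-1}$ as~$\tilde{s}^{-1}\tilde{r}$ (solving~$\tilde{s} r = \tilde{r} s$ with~$\tilde{s} \in S$), and set~$(t^{-1}r)\cdot(s^{-1}u) = (\tilde{s} t)^{-1}(\tilde{r} u)$. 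Fourth, verify that this action is well-defined on equivalence classes and satisfies the module axioms, so that~$\loc{S}{M}$ is a left~$\loc{S}{R}$-module. Fifth and finally, identify the kernel of the extension map~$\mu\colon M \to \loc{S}{M}$, $u \mapsto 1^{-1}u$: one has~$\mu(u) = 0$ iff~$(1, u) \sim (1, 0)$, which unwinds to the existence of~$a \in R$ with~$a \cdot 1 = a \in S$ and~$a u = 0$, i.e. precisely~$su = 0$ for some~$s \in S$, as claimed.

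The hard part will be the consistency checks for well-definedness---particularly that the scalar action is independent of both the representative~$s^{-1}u$ of the module fraction \emph{and} the Ore rewriting~$\tilde{s}^{-1}\tilde{r}$ of~$r\,s^{-1}$. These verifications are where one must repeatedly apply the right Ore and right reversibility conditions to reconcile two different sets of auxiliary elements, and they are the technical heart of the argument; the module axioms themselves then follow routinely once well-definedness is secured. Since this entire development parallels the construction underlying Theorem~\ref{thm:ore-localization} with left and right systematically interchanged, and since the cited reference~\cite[Cor.~II.3.3]{Stenstrom1975} carries out exactly this module-of-fractions construction, I would structure the proof as a careful transcription of that construction into our notation, spelling out only the kernel computation in detail (as it is the conclusion of direct interest) and referring to the standard source for the routine verifications.
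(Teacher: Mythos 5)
Your proposal is correct and follows essentially the same route as the paper: the paper gives no proof of Theorem~\ref{thm:fraction-module} at all, deferring to \cite[Cor.~II.3.3]{Stenstrom1975}, which carries out precisely the construction you outline (formal fractions~$s^{-1}u$, the equivalence relation via common denominators supplied by the Ore condition, reversibility for transitivity and for the well-definedness checks, and the stated kernel characterization). One terminological caveat: the condition that produces common \emph{left} multiples~$a_1 s_1 = a_2 s_2 \in S$ is what the paper's own Equation~\eqref{eq:ore-cond} and Definition~\ref{def:localization} call \emph{left} permutability/reversibility (the ``right'' in the theorem's hypotheses is an inconsistency of the paper itself, which later applies the result to the left Ore monoid~$\regprob$), so your reading of the hypotheses as ``whatever yields common left multiples'' is the correct one, even though your parenthetical gloss of it as ``the Ore condition on the right'' clashes with the paper's naming convention.
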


Now let~$(\galg, \der, \cum)$ be an integro-differential algebra
and~$\intdiffop[\Phi]$ its ring of operators. For setting up a
suitable left action of~$\regprob$ on~$\galg$, we recall the
\emph{fundamental formula}~\cite{Mikusinski1959} of the Mikusi{\'n}ski
calculus
\begin{equation}
  \label{eq:fundamental-formula}
  sf = f' + f(0) \, \delta_0,
\end{equation}
where~$s$ denotes the inverse of the standard convolution
operator~$l$, defined by~$lf = \cum_0^x f(\xi) \, d\xi$,
and~$\delta_0$ is simply the multiplicative identity generated by the
construction of the Mikusi{\'n}ski field. Of course one may
write~$\delta_0 = 1$ but we keep the explicit notation for intuitive
reasons since we can interpret~\eqref{eq:fundamental-formula} as the
distributional derivative of a function~$f\colon \R \rightarrow \R$
that is continuous except for a jump at~$0$. In that case, $\delta_0$
is the Dirac distribution concentrated at~$0$. At any rate, the
intuitive content of~\eqref{eq:fundamental-formula} is that~$s$ is a
kind of enhanced differential operator that memorizes the function
value at~$0$ that is otherwise lost. Taking this clue, we define the
action of~$\regprob$ on~$\galg$ by
\begin{equation}
  \label{eq:problem-action1}
    (T, \bspc) \cdot f = T\!f + P\!f \: (T, \bspc),
\end{equation}
where~$P$ is the projector onto~$\Ker{T}$ along~$\orth{\bspc}$. In the
special case of~$T = \der$, this
recovers~\eqref{eq:fundamental-formula} if we think of~$(\der,
[\evl_0])$ as some kind of algebraic representation of the Dirac
distribution~$\delta_0$.

The definition~\eqref{eq:problem-action1} has the consequence that we
must extend~$\galg$ so that it contains the new elements~$(T, \bspc)$,
prior to the extension effected by the localization. This is somewhat
\emph{unsatisfying} but appears to be inevitable in view of the facts
pointed out at the opening of the section. Perhaps in the future one
can find a more powerful localization that generates at once the
extension objects needed in the fundamental
formula~\eqref{eq:fundamental-formula}. At any rate, we must now also
define the action of~$\regprob$ on the new elements, but this is
clearly
\begin{equation}
  \label{eq:problem-action2}
  (\tilde{T}, \tilde{\bspc}) \cdot f \: (T, \bspc) = 
  f \: (\tilde{T} T, \tilde{\bspc} T + \bspc),
\end{equation}
where the boundary problem on the right side is the usual
product~\eqref{eq:bp-product} in the problem monoid~$\regprob$. This
will ensure that the action is well-defined.

The \emph{intuitive meaning} of the ideal element~$f \: (T, \bspc)$ is
that it records various integration constants that were lost under
differentation. When the element is created
by~\eqref{eq:problem-action1}, it encodes all the boundary
values~$\beta(f)$ for~$\beta \in \bspc$, and the function $f$ is
confined to the kernel of~$T$. Further action by a boundary
problem~$(\tilde{T}, \tilde{\bspc})$ according
to~\eqref{eq:problem-action2} leads to a new differential
operator~$\tilde{T} T$ while leaving~$f$ intact. Hence we see that the
function inside an ideal element is always in the kernel of the
associated differential operator. We may therefore restrict the ideal
elements~$f \: (T, \bspc)$ to those with~$Tf=0$. After the action
of~\eqref{eq:problem-action2}, the new boundary space
is~$\tilde{\bspc} T + \bspc$, so the old boundary conditions~$\beta
\in \bspc$ are retained (the ones for which we have boundary
values). The new boundary conditions~$\tilde{\beta} T$
for~$\tilde{\beta} \in \tilde{\bspc}$ give zero on~$f$ since actually
$Tf=0$. In some sense, we have added redundant boundary data.

We must therefore \emph{regulate redundancy} in the ideal
elements. This can be achieved by declaring that~$f \: (T, \bspc)$
should be the same as~$\tilde{G}f \: (T, \bspc)(\tilde{T},
\tilde{\bspc})$ for any~$(\tilde{T}, \tilde{\bspc}) \in \regprob$ with
Green's operator~$\tilde{G}$. This can be understood as follows. The
boundary conditions in~$\tilde{G}f \: (T, \bspc)(\tilde{T},
\tilde{\bspc})$ are~$\tilde{\beta} \in \tilde{\bspc}$ and~$\beta
\tilde{T}$ with~$\beta \in \bspc$. The former yield zero
on~$\tilde{G}f$ since~$\tilde{G}$ is the Green's operator
of~$(\tilde{T}, \tilde{\bspc})$, the latter give back~$\beta \tilde{T}
(\tilde{G} f) = \beta(f)$ just as for the ideal element~$f \: (T,
\bspc)$. This leads to the following definition.

\begin{definition}
  Let~$\mathcal{I}$ be the subspace of~$\galg \otimes_K K\regprob $
  generated by the ideal elements~$f \otimes (T, \bspc) \equiv f \:
  (T, \bspc)$ with~$Tf=0$. Furthermore, let~$\mathcal{I}_0$ be the
  subspace of~$\mathcal{I}$ generated by the elements
  \begin{equation*}
    f \: (T, \bspc) - \tilde{G}f \: (T\tilde{T}, \bspc \tilde{T} +
    \tilde{\bspc}).
  \end{equation*}
  Then we define the \emph{module of methorious functions} to
  be~$\hypfun{\galg} = \galg \oplus \mathcal{I}/\mathcal{I}_0$.
\end{definition}
Let us now check that~$\hypfun{\galg}$ is indeed a module.

\begin{proposition}
  \label{prop:prob-action}
  Let~$(\galg, \der, \cum)$ be an integro-differential algebra with
  character set~$\Phi$. The definitions~\eqref{eq:problem-action1}
  and~\eqref{eq:problem-action2} induce a monoid action of~$\regprob$
  on~$\hypfun{\galg}$ such that it becomes a $K\regprob$-module.
\end{proposition}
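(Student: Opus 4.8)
The plan is to treat an element of $\hypfun{\galg} = \galg \oplus \mathcal{I}/\mathcal{I}_0$ as an honest function together with a class of ideal elements, to verify the monoid axioms on each summand, and finally to extend the action $K$-linearly to the monoid algebra. First I would check that both prescriptions land in $\hypfun{\galg}$: in~\eqref{eq:problem-action1} the term $Pf \: (T, \bspc)$ is admissible because $P$ projects onto~$\Ker{T}$, whence $T(Pf)=0$; in~\eqref{eq:problem-action2} the standing assumption $Tf=0$ gives $(\tilde{T}T)f=0$, so the new label is again admissible. Next I would show that~\eqref{eq:problem-action2} descends to the quotient. Applying a problem $(\hat{T}, \hat{\bspc})$ to a defining generator $f \: (T, \bspc) - \tilde{G}f \: (T\tilde{T}, \bspc\tilde{T} + \tilde{\bspc})$ of~$\mathcal{I}_0$ produces
\[
  f \: (\hat{T}T, \hat{\bspc}T + \bspc) - \tilde{G}f \: (\hat{T}T\tilde{T},
  (\hat{\bspc}T + \bspc)\tilde{T} + \tilde{\bspc}),
\]
which is once more a defining generator of~$\mathcal{I}_0$, with the same right factor $(\tilde{T}, \tilde{\bspc})$ and Green's operator~$\tilde{G}$. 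Hence the action passes to $\mathcal{I}/\mathcal{I}_0$.

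For the monoid axioms, the unit $(1, O)$ acts trivially: on~$\galg$ one has $\Ker{1} = O$, so $P=0$ and $(1,O)\cdot f = f$, while on ideal classes it is the neutral element of~$\regprob$. Associativity on ideal classes is just associativity of the product~\eqref{eq:bp-product} in~$\regprob$. The one substantial point is compatibility with composition on the~$\galg$-summand. Writing the product as $(T_1,\bspc_1)(T_2,\bspc_2) = (T_1T_2, \bspc_1 T_2 + \bspc_2)$, with Green's operators $G_1, G_2, G_{12}$ and associated projectors $P_1, P_2, P_{12}$, I would expand $(T_1,\bspc_1)\cdot\big((T_2,\bspc_2)\cdot f\big)$ by applying~\eqref{eq:problem-action1} twice and then~\eqref{eq:problem-action2} to the resulting ideal term. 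The honest components agree at once, both equalling $T_1T_2 f$, so the whole identity collapses to a congruence between ideal classes:
\[
  P_{12}f \: (T_1T_2, \bspc_1 T_2 + \bspc_2) \;\equiv\;
  P_1(T_2 f) \: (T_1, \bspc_1) + P_2 f \: (T_1T_2, \bspc_1 T_2 + \bspc_2)
  \pmod{\mathcal{I}_0}.
\]

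This congruence is the heart of the matter and the step I expect to be the main obstacle, since it is exactly where the defining relation of~$\mathcal{I}_0$ and the anti-isomorphism must interlock. My plan is to rewrite the first right-hand summand using the generator of~$\mathcal{I}_0$ with right factor $(T_2, \bspc_2)$ and Green's operator~$G_2$, which turns $P_1(T_2 f) \: (T_1, \bspc_1)$ into $G_2 P_1(T_2 f) \: (T_1T_2, \bspc_1 T_2 + \bspc_2)$. After this rewriting both sides carry the same label, and the congruence reduces to the operator identity $P_{12} = G_2 P_1 T_2 + P_2$ on~$\galg$. I would establish this identity from $P = 1 - GT$, valid for every regular problem because $TG=1$ together with $\Img{G} = \orth{\bspc}$ forces $GT = 1 - P$, combined with the anti-isomorphism $G_{12} = G_2 G_1$ from~\eqref{eq:anti-isomorphism}: substituting $P_1 = 1 - G_1 T_1$ and $P_2 = 1 - G_2 T_2$ yields $G_2 P_1 T_2 + P_2 = 1 - G_2 G_1 T_1 T_2 = 1 - G_{12}T_1 T_2 = P_{12}$. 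Since both prescriptions are visibly $K$-linear in the function and in the problem label, the monoid action then extends $K$-linearly to~$K\regprob$, making $\hypfun{\galg}$ a $K\regprob$-module, as claimed.
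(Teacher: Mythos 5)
Your proposal is correct and follows essentially the same route as the paper's proof: well-definedness on the quotient via the product structure, the unit and ideal-class cases, and then the associativity check on the $\galg$-summand reduced to the composite projector identity $P_{12}=G_2P_1T_2+P_2$ (which the paper states in the form $\tilde{P}+\tilde{G}P\tilde{T}$ and leaves as ``easily checked''), with the two middle terms identified modulo $\mathcal{I}_0$. Your explicit derivation of that identity from $P=1-GT$ and the anti-isomorphism~\eqref{eq:anti-isomorphism} is a welcome filling-in of the detail the paper omits.
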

\begin{proof}
  Any monoid~$E$ acting on a~$K$-algebra~$A$ extends to an action of
  the monoid ring~$K[E]$ that makes~$A$ into a $K[E]$-module. Hence it
  suffices to verify the statement about the monoid action.

  First of all we must check that~\eqref{eq:problem-action2} does not
  depend on the representative~$f \: (T, \bspc) \in \mathcal{I}$,
  meaning it maps the subspace~$\mathcal{I}_0$ into itself. This
  follows immediately from the associativity of the
  multiplication~\eqref{eq:bp-product} in~$\regprob$.

  Clearly we have~$(1,0) \cdot f = f + 0 \: (1,0) = f$ and $(1,0)
  \cdot (T, \bspc) \: f = (T, \bspc) \: f$, so the unit element is
  respected. Associativity is immediate in the case
  of~\eqref{eq:problem-action2}, so it remains to check that~$(T,
  \bspc) \cdot ((\tilde{T}, \tilde{\bspc}) \cdot f)$ and~$((T,
  \bspc)(\tilde{T}, \tilde{\bspc})) \cdot f$ are equal. The former is
  \begin{equation*}
    (T, \bspc) \cdot \Big( \tilde{T} f + \tilde{P} \! f \: (\tilde{T},
    \tilde{\bspc}) \Big) =
    T \tilde{T} f + P\tilde{T} \! f \: (T, \bspc) + \tilde{P} \! f \:
    (T \tilde{T}, \bspc \tilde{T} + \tilde{\bspc}),
  \end{equation*}
  where~$P$ is the projector onto~$\Ker{T}$ along~$\orth{\bspc}$
  and~$\tilde{P}$ is correspondingly the projector
  onto~$\Ker{\tilde{T}}$ along~$\orth{\tilde{\bspc}}$. If~$G$
  and~$\tilde{G}$ are the Green's operators of~$(T, \bspc)$
  and~$(\tilde{T}, \tilde{\bspc})$, respectively, one can easily check
  that the projector associated with the composite problem
  \begin{equation*}
    (T, \bspc)(\tilde{T}, \tilde{\bspc}) = (T \tilde{T}, \bspc
    \tilde{T} + \tilde{\bspc})
  \end{equation*}
  is~$\tilde{P} + \tilde{G} P \tilde{T}$, meaning it projects
  onto~$\Ker{T \tilde{T}}$ along~$\orth{(\bspc \tilde{T} +
    \tilde{\bspc})}$. Hence we obtain for the other side of the
  prospective equality
  \begin{align*}
    & (T \tilde{T}, \bspc \tilde{T} + \tilde{\bspc}) \cdot f = T
    \tilde{T} f + (\tilde{P} \! f + \tilde{G} P \tilde{T} \! f) \;
    (T \tilde{T}, \bspc \tilde{T} + \tilde{\bspc})\\
    & = T \tilde{T} f + \tilde{G} P \tilde{T} \! f \: (T \tilde{T},
    \bspc \tilde{T} + \tilde{\bspc}) + \tilde{P} \! f \: (T \tilde{T},
    \bspc \tilde{T} + \tilde{\bspc}),
  \end{align*}
  where the first and the last term is identical to the corresponding
  terms on the left hand side while the middle terms are equal since
  their difference is in~$\mathcal{I}_0$.
\end{proof}

Since~$\hypfun{\galg}$ is a~$K\regprob$-module, we obtain
the~$\locprob$-module~$\locfun{\galg}$ of \emph{methorious
  hyperfunctions} by localization via
Theorem~\ref{thm:fraction-module}. We cannot expect the
extension~$\mu\colon \hypfun{\galg} \rightarrow \locfun{\galg}$ to be
injective since its kernel
\begin{equation}
  \label{eq:kernel-hypfun}
  \mathcal{K} = \{ \phi \in \hypfun{\galg} \mid \exists (T, \bspc) \in
  \regprob\colon (T, \bspc) \cdot \phi = 0 \}
\end{equation}
contains elements like~$(\der, [\evl_1]) - (\der, [\evl_0])$, which
is annihilated upon multiplying with~$(\der, [\cum_0^1])$ from the left
(see the example before Conjecture~\ref{conj:extension-kernel}). But
fortunately no elements of~$\galg$ are lost.

\begin{proposition}
  \label{prop:hypfun-extension}
  Let~$(\galg, \der, \cum)$ be an integro-differential algebra with
  character set~$\Phi$. Then we have an embedding~$\galg \subset
  \locfun{\galg}$.
\end{proposition}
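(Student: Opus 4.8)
The plan is to establish injectivity of the composite $\galg\hookrightarrow\hypfun{\galg}\xrightarrow{\mu}\locfun{\galg}$, where the first arrow is the inclusion of $\galg$ as the left summand of $\hypfun{\galg}=\galg\oplus\mathcal{I}/\mathcal{I}_0$ and is injective for trivial reasons. By the kernel description \eqref{eq:kernel-hypfun} furnished by Theorem~\ref{thm:fraction-module}, an element of $\hypfun{\galg}$ lies in $\Ker{\mu}$ precisely when some regular problem annihilates it. Hence it suffices to prove that no nonzero $f\in\galg$ satisfies $(T,\bspc)\cdot f=0$ for a regular $(T,\bspc)\in\regprob$.

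First I would unfold the action \eqref{eq:problem-action1}, writing $(T,\bspc)\cdot f=Tf+Pf\:(T,\bspc)$ with $P$ the projector onto $\Ker{T}$ along $\orth{\bspc}$. Since $Tf$ lies in the summand $\galg$ while $Pf\:(T,\bspc)$ lies in $\mathcal{I}/\mathcal{I}_0$ (note $Pf\in\Ker{T}$, so the latter is a legitimate ideal element), the vanishing of the sum forces $Tf=0$ and $Pf\:(T,\bspc)=0$ separately. The first equation gives $f\in\Ker{T}$, whence $Pf=f$, so the whole problem collapses to a single statement about ideal elements: if $(T,\bspc)$ is regular and $f\in\Ker{T}$, then $f\:(T,\bspc)=0$ in $\mathcal{I}/\mathcal{I}_0$ implies $f=0$.

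To prove this I would read off the \emph{retained boundary data} $\beta\mapsto\beta(f)$ for $\beta\in\bspc$ as an invariant of the class of $f\:(T,\bspc)$. The point is that the defining relations of $\mathcal{I}_0$ preserve exactly this data: passing from $f\:(T,\bspc)$ to $\tilde{G}f\:(T\tilde{T},\bspc\tilde{T}+\tilde{\bspc})$, the retained conditions satisfy $(\beta\tilde{T})(\tilde{G}f)=\beta(\tilde{T}\tilde{G}f)=\beta(f)$ because $\tilde{T}\tilde{G}=1$, while the newly appended conditions vanish, $\tilde{\beta}(\tilde{G}f)=0$, because $\Img{\tilde G}=\orth{\tilde\bspc}$. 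Granting that this assignment descends to a genuine $K$-linear invariant on $\mathcal{I}/\mathcal{I}_0$, the hypothesis $f\:(T,\bspc)=0$ would force $\beta(f)=0$ for all $\beta\in\bspc$, i.e.\ $f\in\orth{\bspc}$; combined with $f\in\Ker{T}$ and the regularity decomposition $\Ker{T}\dirs\orth{\bspc}=\galg$ (equivalently the nonsingularity of the evaluation matrix in Lemma~\ref{lem:regularity-test}), this yields $f\in\Ker{T}\cap\orth{\bspc}=O$ and hence $f=0$.

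I expect the genuine difficulty to lie entirely in the parenthetical ``granting'' of the previous paragraph, namely in proving that the retained boundary data is well defined on the \emph{whole} quotient rather than merely invariant under a single relation. The subtlety is that the right factors $(\tilde{T},\tilde{\bspc})$ appended in $\mathcal{I}_0$ are not filtered---$\regprob$ is left but \emph{not} right permutable (Proposition~\ref{prop:not-right-permutable})---so two representatives of the same class need a careful comparison rather than a common refinement. The tools to push this through are the Division Lemma~\ref{lem:division}, which pins down the left factor uniquely, together with the fact that the sum $\bspc\tilde{T}\dirs\tilde{\bspc}$ in \eqref{eq:bp-product} is always direct; these let me separate the retained part $\bspc\tilde{T}$ from the appended part $\tilde{\bspc}$ coherently along any chain of relations. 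The concluding regularity argument is then routine.
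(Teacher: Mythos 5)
Your proposal follows the same route as the paper's own proof: unfold the action via \eqref{eq:problem-action1}, split $(T,\bspc)\cdot f = Tf + Pf\:(T,\bspc) = 0$ along the direct sum $\hypfun{\galg}=\galg\oplus\mathcal{I}/\mathcal{I}_0$ into $Tf=0$ and the vanishing of the ideal-element component, and conclude from $\Ker{T}\dirs\orth{\bspc}=\galg$. The only divergence is one of candour. The paper passes from ``$Pf\:(T,\bspc)=0$ in $\mathcal{I}/\mathcal{I}_0$'' directly to ``$Pf=0$ in $\galg$'' without comment, whereas you correctly isolate this as the crux: one must rule out that a nonzero pure tensor $g\otimes(T,\bspc)$ with $g\in\Ker{T}$ lies in the span $\mathcal{I}_0$ of the redundancy relations. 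Your candidate invariant---the retained boundary data $\beta\mapsto\beta(g)$ for $\beta$ in the boundary space of the representative---is the right idea, and your check that each single generator of $\mathcal{I}_0$ preserves it is correct. But as you concede, you have not shown that this data assembles into a well-defined $K$-linear functional on all of $\mathcal{I}$ (the assignment is indexed by the boundary space of each generator, so it is not literally a functional on $\mathcal{I}$ until one makes a coherent global choice), and without that the argument is not closed; an element of $\mathcal{I}_0$ is a linear combination of \emph{different} generators whose second terms may interfere, and the failure of right permutability (Proposition~\ref{prop:not-right-permutable}) blocks the easy fix of passing to a common right multiple. So your submission is, in effect, the paper's proof plus an honest flag of a step the paper elides. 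To close it one would need to exhibit, for the given regular $(T,\bspc)$ and a $\beta_0\in\bspc$ with $\beta_0(f)\neq 0$, a linear functional on $\mathcal{I}$ annihilating every generator of $\mathcal{I}_0$ but not $f\otimes(T,\bspc)$---exactly the coherence problem you name, for which the Division Lemma~\ref{lem:division} and the directness of $\bspc\tilde{T}+\tilde{\bspc}$ are indeed the natural tools, but which remains to be carried out.
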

\begin{proof}
  Assume~$(T, \bspc) \cdot f = 0$ for some~$f \in \galg$ and~$(T,
  \bspc) \in \regprob$. Then~$Tf + P \! f \: (T, \bspc) = 0$ implies
  that $Tf=0$ and~$Pf=0$. But the former means that~$f \in \Ker{T}$
  and the latter that~$f \in \orth{\bspc}$. Since~$(T, \bspc)$ is a
  regular problem, we have a direct sum~$\Ker{T} \dirs \orth{\bspc} =
  \galg$ and so~$f=0$.
\end{proof}

In the module of methorious hyperfunctions, we can finally justify our
earlier notation~$(T, \bspc)^{-1}$ for the \emph{Green's operator} of
a regular boundary problem~$(T, \bspc) \in \regprob$. To avoid
confusion with~$(T, \bspc)^{-1} \in \locprob$ we refrain from the
reciprocal notation for Green's operators in the scope of the
following theorem. But the result of the theorem is of course that it
anyway does not matter how we interpret~$(T, \bspc)^{-1} f$ since it
amounts to the same.

\begin{proposition}
  \label{prop:greensop-localization}
  We have~$(T, \bspc)^{-1} \cdot f = Gf$ for all~$f \in
  \galg$. Moreover, if~$Tf=0$ then~$(T, \bspc)^{-1} \cdot
  f \: (T, \bspc) = f$.
\end{proposition}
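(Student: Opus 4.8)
The plan is to exploit the fact that $(T,\bspc)$ lies in the denominator set $\regprob$, so that $\epsilon((T,\bspc))$ is invertible in $\locprob$ and therefore acts invertibly on the localized module $\locfun{\galg}$. Rather than attempting to evaluate the fraction $(T,\bspc)^{-1}$ directly, I would verify each of the two identities by applying $(T,\bspc)$ to the proposed value and then cancelling $(T,\bspc)^{-1}(T,\bspc) = 1$ by associativity of the module action. In this way both claims reduce to a single short computation of the action \eqref{eq:problem-action1}.

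For the first claim I compute the action of $(T,\bspc)$ on $Gf \in \galg$ by means of \eqref{eq:problem-action1},
\[
  (T,\bspc)\cdot(Gf) = T(Gf) + P(Gf)\:(T,\bspc),
\]
where $P$ is the projector onto $\Ker{T}$ along $\orth{\bspc}$. By the defining properties of the Green's operator (Definition~\ref{def:greens-operator}) we have $TG = 1$, whence $T(Gf) = f$, and $\Img{G} = \orth{\bspc}$, whence $Gf \in \orth{\bspc}$ is annihilated by $P$. Thus $(T,\bspc)\cdot(Gf) = f$ already in $\hypfun{\galg}$, and hence in $\locfun{\galg}$. Multiplying on the left by $(T,\bspc)^{-1}$ and using associativity yields $Gf = (T,\bspc)^{-1}\cdot f$.

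For the second claim, suppose $Tf = 0$, so that $f \in \Ker{T}$ and the ideal element $f\:(T,\bspc)$ is legitimate (it satisfies the defining constraint $Tf=0$). Since $f$ then lies in the range of the projector $P$, we have $Pf = f$, and \eqref{eq:problem-action1} gives
\[
  (T,\bspc)\cdot f = Tf + Pf\:(T,\bspc) = f\:(T,\bspc).
\]
Applying $(T,\bspc)^{-1}$ on the left and cancelling exactly as before produces $(T,\bspc)^{-1}\cdot f\:(T,\bspc) = f$.

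The one point demanding care---and the main obstacle---is the legitimacy of the cancellation: one must know that $\locfun{\galg}$ really is a left $\locprob$-module on which $\epsilon((T,\bspc))$ acts as an \emph{invertible} operator compatible with the $K\regprob$-action inherited from $\hypfun{\galg}$. This is precisely what the construction via Theorem~\ref{thm:fraction-module} supplies, together with the fact that $(T,\bspc) \in \regprob$ is a denominator. Once this module-theoretic bookkeeping is in place, nothing remains beyond the two displayed computations, each of which uses only $TG=1$, $\Img{G}=\orth{\bspc}$, and the description of $P$ as the projector onto $\Ker{T}$ along $\orth{\bspc}$.
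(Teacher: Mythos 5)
Your proposal is correct and follows essentially the same route as the paper's own proof: apply $(T,\bspc)$ to $Gf$ (respectively to $f$) via the action~\eqref{eq:problem-action1}, use $TG=1$ and $PG=0$ (equivalently $\Img{G}=\orth{\bspc}$), respectively $Pf=f$ for $f\in\Ker{T}$, and then multiply by $(T,\bspc)^{-1}\in\locprob$. The extra remarks on the legitimacy of the cancellation are a sensible elaboration of what the paper leaves implicit.
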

\begin{proof}
  We have~$(T, \bspc) \cdot Gf = TGf + PGf \: (T, \bspc) = f$
  since~$TG=1$ and~$PG=0$. Multiplying by~$(T,\bspc)^{-1} \in
  \locprob$ yields the first result claimed. Now assume~$Tf=0$. We
  obtain~$(T, \bspc) \cdot f = 0 + P \! f \: (T, \bspc) = f \: (T,
  \bspc)$ since~$Pf = f$ for~$f \in \Ker{T}$. Again we multiply
  by~$(T,\bspc)^{-1}$ to gain the result.
\end{proof}

We conclude with an example that hints at possible applications of our
noncommutative Mikusi{\'n}ski calculus. The classical Mikusi{\'n}ski
calculus has only one \emph{fundamental formula} since boundary values
(or rather: initial values) are only processed at~$0$. In contrast,
there are plenty of fundamental formulae in the noncommutative
Mikusi{\'n}ski calculus.

\begin{example}
  \label{ex:fundamental-formulae}
  Let us write~$\der_\xi$ and~$\delta_\xi$ as abbreviations for the
  problems~$(\der, [\evl_\xi]) \in \locprob$ and~$(\der, [\evl_\xi])
  \in \locfun{\galg}$, respectively. Then we have~$\der_\xi f = f' +
  f(\xi) \, \delta_\xi$ by the definition of the
  action~\eqref{eq:problem-action1}. So we have algebraic
  representations for all the Dirac distributions. But there are other
  methorious functions that do not have any distributional
  counterpart. For example, let us consider~$\der_F = (\der,
  [\cum_0^1]) \in \locprob$ and~$\epsilon = (\der, [\cum_0^1]) \in
  \locfun{\galg}$. This yields the fundamental formula
  \begin{equation*}
    \der_F f = f' + \left(\int_0^1 f(\xi) \, d\xi \right) \epsilon,
  \end{equation*}
  so~$\epsilon$ is a kind of ``smeared out'' distribution that keeps
  the mean value of a given function~$f$.
\end{example}

\begin{example}
  \label{ex:inhom-bp}
  Finally let us see how one solves \emph{inhomogeneous boundary
    problems} in the noncommutative Mikusi{\'n}ski calculus. We use
  the standard setting of~$\galg = C^\infty(\R)$ and consider the
  problem
  \begin{equation}
    \label{eq:inhom-bp}
    \bvp{u''=f}{u(0)=a, u(b)=b}
  \end{equation}
  for given boundary values~$a, b \in \R$. We know how to compute the
  Green's operator~$G$ of the homogeneous problem~$(\der^2, [\evl_0,
  \evl_1])$ as mentioned at the end of Section~\ref{sec:monoid-bp}. In
  this case the projector~$P$ onto~$\Ker{\der^2} = [1, x]$
  along~$\orth{[\evl_0, \evl_1]}$ is given by~$Pu = u(0) (1-x) + u(1)
  x$. Hence we have
  \begin{align*}
    (\der^2, [\evl_0, \evl_1]) \, u &= u'' + \Big( u(0) \, (1-x) +
    u(1) \, x \Big) \, (\der^2, [\evl_0, \evl_1])\\
    & = f + \Big( a \, (1-x) + b \, x \Big) (\der^2, [\evl_0,
    \evl_1]),\\[0.5ex]
    u &= Gf + a \, (1-x) + b \, x
  \end{align*}
  by Proposition~\ref{prop:greensop-localization}.
\end{example}

\section{Conclusion}
\label{sec:conclusion}
% =====================

As already indicated at various points above, our construction has
several \emph{loose ends}. To begin with, it would be preferrable to
localize in a subring (or even all) of~$\intdiffop[\Phi]$ rather than
the monoid ring~$K\regprob$. This would have the advantage that one
has a natural action on the underlying integro-differential
algebra~$(\galg, \der, \cum)$, and the somewhat artificial action on
the module of methorious functions~$\hypfun{\galg}$ would be
unnecessary. Regarding the latter, we have already remarked in
Section~\ref{sec:methorious-functions} that the current two-stage
process of creating the localization~$\locfun{\galg}$ is
unsatisfactory: In Mikusi{\'n}ski's setup, all the ``ideal elements''
like~$s$ and~$\delta_0$ are an immediate result of the localization
while we have to supply them offhand, prior to localization. We would
like to find a better formulation in the future that will avoid this
kind of inadequacy.

One possible path towards such an improved localization is suggested
by the prominent appearance of \emph{singular problems} in the proof
of Lemma~\ref{lem:prob-left-permutable}. It may be worthwhile to
expand the ring~$K\regprob$ to include (all or some) singular boundary
problems. Of course this will also increase the kernel of the
corresponding extension (we cannot expect to invert singular
problems), but perhaps the resulting ring of fractions is more
natural. In particular, it may be possible to interpret the sum of two
(singular) boundary problems in some useful way, in contrast to the
formal sums of~$K\regprob$. The results
of~\cite{KorporalRegensburgerRosenkranz2011,Korporal2012} will be
useful for the work in this direction.

Staying with the current construction, there are some obvious open
questions: Fist of all, the \emph{kernel of the extension}~$\epsilon$
into the methorious operators should be determined, possibly by
proving Cojecture~\ref{conj:extension-kernel}. Likewise, the
kernel~\ref{eq:kernel-hypfun} of the extension~$\mu$ into the
methorious hyperfunctions is to be computed.

Finally, we would like to draw attention to the interesting link
between integro-differential algebras and the umbral calculus
(Section~\ref{sec:umbral-bc}), which deserves to be studied in more
depth. There is also an intriguing relation~\cite{Guo2001} between the
umbral calculus and the Rota-Baxter algebras (both of these being
favourite topic of G.-C.~Rota), which might benefit from the new
perspective afforded by boundary problems in integro-differential
algebras (since the latter are special cases of differential
Rota-Baxter algebras and hence of plain Rota-Baxter algebras). As a
more mundane goal, it would also be important to find a better
characterization of umbral character sets that strengthens the
separativity and completeness conditions given before
Example~\ref{ex:separative}.

\begin{acknowledgements}
  First of all we would like to acknowledge Georg Regensburger, who
  has worked with the first author on an earlier (much more
  specialized and mathematically flawed) version of localization. We
  thank him also for his insight about monic common left multiples and
  the corresponding reference (last paragraph of
  Proposition~\ref{prop:left-ext}). We express our gratitude also to
  A.~Bostan for his pertinent remarks on left common multiples in the
  Weyl algebra (see the remark after the proof of
  Proposition~\ref{prop:left-ext}). Furthermore, we are grateful for
  illuminating discussions with I.~Dimovski and M.~Spiridonova.
\end{acknowledgements}

\end{document}